\documentclass[oneside,12pt]{amsart}

\usepackage{amsfonts,amssymb,amsmath,amsthm,graphicx,fancyhdr,color}
\usepackage[hidelinks]{hyperref}
\usepackage{array}
\usepackage{orcidlink}
\usepackage{float}
\usepackage{placeins}

\usepackage[ddmmyyyy,hhmmss]{datetime}

\usepackage{float,caption}
\usepackage{tikz}
\usepackage[mode=tex]{standalone}

\usepackage{bbm}

\usepackage{mathtools}

\usepackage[T1]{fontenc}
\usepackage[utf8]{inputenc}

\usepackage{geometry} 
\usepackage{amsthm,amsmath,amsfonts,amssymb,amstext} 
\usepackage{enumitem,float,pgfplots}
\usepackage{tikz,standalone}
\usepackage{mathtools}
\usepackage{MnSymbol} 
\usepackage{stmaryrd}
\usepackage{relsize}
\usepackage{array,multirow,makecell}
\usepackage{verbatim}
\usepackage{bbm}
\usepackage{accents}
\usepackage{fancyhdr}
\usepackage{graphicx}
\usepackage{subcaption}
\usepackage[ddmmyyyy,hhmmss]{datetime}
\usepackage{pdfpages}
\usepackage{lipsum}

\def\card#1{ | #1 | } 
\def\id{\textnormal{id}} 

\def\NN{\mathbb{N}} 
\def\NNp{\NN_{+}} 
\def\ZZ{\mathbb{Z}} 
\def\RR{\mathbb{R}} 
\def\CC{\mathbb{C}} 
\def\smallO#1{\mathrm{o} \left( #1 \right)} 
\def\lfrac#1#2{#1 / #2} 
\def\diff#1{\textnormal{d} #1} 
\def\complexAdjoint#1{ \overline{#1} }

\def\supp{\textnormal{supp} \, }
\def\const{\textnormal{const}}
\def\sgn{\textnormal{sgn}\,}
\def\principalLogarithm{\textnormal{Log}\,}

\def\principalArgument{\textnormal{Arg}\,}

\def\weaklyConvergesTo{\overset{d}{\to}} 

\def\part{\mathcal{P}} 
\def\partTree#1{ \mathcal{T}_{#1} }
\def\emptyWord{ () } 
\def\fWords#1{\mathrm{w}(#1)} 

\def\vWords#1#2{\mathcal{V}_{#1}(#2)} 
\def\vChar{\mathbbm{1}_{\mathrm{V}}} 

\def\vDiscreteFockSpace{ \mathcal{F}_{\mathrm{vm}}^{\mathrm{d}} }

\def\part{\mathcal{P}} 
\def\oPart{\mathcal{OP}} 

\def\noncr{\mathcal{NC}} 
\def\noncrk#1{\mathcal{NC}^{#1}}
\def\oNoncr{\mathcal{ONC}} 
\def\oNoncrk#1{\mathcal{ONC}^{#1}}

\def\vNoncr{\mathcal{OV}} 
\def\vNoncrk#1{\mathcal{OV}^{#1}}
\def\vlNoncr#1{\mathcal{LV}_{#1}} 
\def\vlNoncrk#1#2{\mathcal{LV}_{#2}^{#1}}

\def\momentExact#1#2#3{ m_{#1}(#2, #3) } 
\def\momentLimit#1#2{ m_{#1}(#2) } 
\def\mgfExact#1#2{ M_{#1, #2} } 
\def\mgfLimit#1{ M_{#1} } 

\numberwithin{equation}{section}

\theoremstyle{plain}
\newtheorem{theorem}{Theorem}[section]
\newtheorem{lemma}[theorem]{Lemma}
\newtheorem{proposition}[theorem]{Proposition}
\newtheorem{corollary}[theorem]{Corollary}
\newtheorem*{theorem*}{Theorem}
\newtheorem*{lemma*}{Lemma}
\newtheorem*{proposition*}{Proposition}
\newtheorem*{corollary*}{Corollary}

\theoremstyle{definition}
\newtheorem{definition}[theorem]{Definition}
\newtheorem{remark}[theorem]{Remark}
\newtheorem{example}[theorem]{Example}
\newtheorem*{definition*}{Definition}
\newtheorem*{remark*}{Remark}
\newtheorem*{example*}{Example}

\def\complexAdjoint#1{ \overline{#1} }
\def\supp{\textnormal{supp} \,}

\newcommand{\tp}{\mathbin{\hbox{$\bigcirc$\hbox to 0pt{\hspace{-0.81em}$\scriptstyle\top$\hfil}}}}
\def\partOpDiscA#1#2{b_{#2}^{#1}}
\def\partOpDiscB#1#2{ b_{\left( {#1}, {#2} \right)} }

\newcommand{\ml}[1]{\mathfrak{ml}\left(#1\right)}
\newcommand{\legs}[1]{n \left( #1 \right)}
\def\lfrac#1#2{#1 / #2}

\DeclareRobustCommand{\ncPartitionRecurrenceB}[2]
{
   	\vcenter
   	{	
   		\hbox
   		{
   			\tikz
   			{
				\tikzstyle{leg} = [circle, draw=black, fill=black!100, text=black!100, thin, inner sep=0pt, minimum size=2.0]

                \pgfmathsetmacro {\x}{0}
                \pgfmathsetmacro {\y}{0}

                \pgfmathsetmacro {\dx}{0.75}
                \pgfmathsetmacro {\dy}{0.5}

				\draw[-] (\x+0*\dx,\y+0*\dy) -- (\x+0*\dx,\y+1*\dy);
				\node () at (\x+0*\dx,\y+0*\dy) [leg] {.};
				\draw[-] (\x+1*\dx,\y+0*\dy) -- (\x+1*\dx,\y+1*\dy);
				\node () at (\x+1*\dx,\y+0*\dy) [leg] {.};
				\draw[-] (\x+0*\dx,\y+1*\dy) -- (\x+1*\dx,\y+1*\dy);

				\node () at (\x+0.5*\dx,\y+0*\dy) [label={[label distance = -1.5mm]above:\small{$#1$}}] {};
				\node () at (\x+1.5*\dx,\y+0*\dy) [label={[label distance = -1.5mm]above:\small{$#2$}}] {};
			}
		}
	} \! \! \! \!
}

\DeclareRobustCommand{\ncPartitionRecurrenceD}[4]
{
   	\vcenter
   	{	
   		\hbox
   		{
   			\tikz
   			{
				\tikzstyle{leg} = [circle, draw=black, fill=black!100, text=black!100, thin, inner sep=0pt, minimum size=2.0]

                \pgfmathsetmacro {\dx}{0.75}
                \pgfmathsetmacro {\dy}{0.65}

                \pgfmathsetmacro {\x}{0}
                \pgfmathsetmacro {\y}{0}
				\draw[-] (\x+0*\dx,\y+0*\dy) -- (\x+0*\dx,\y+1*\dy);
				\node () at (\x+0*\dx,\y+0*\dy) [leg] {.};
                \foreach \p in {#1, #2, #3}
                {
                    \draw[-] (\x+0*\dx,\y+1*\dy) -- (\x+\dx,\y+1*\dy);

				    \draw[-] (\x+\dx,\y+0*\dy) -- (\x+\dx,\y+1*\dy);
				    \node () at (\x+\dx,\y+0*\dy) [leg] {.};

                    \node () at (\x+0.5*\dx,\y+0*\dy) [label={[label distance = -1.5mm]above:\small{$\p$}}] {};
                    \xdef\x{\x+\dx}
                }
				\node () at (\x+0.5*\dx,\y+0*\dy) [label={[label distance = -1.5mm]above:\small{$#4$}}] {};
			}
		}
	} \! \! \! \!
}

\usepackage{fancyhdr}
\allowdisplaybreaks
\begin{document}
\title{Distribution for nonsymmetric V-monotone position operators}
\author[A. Dacko]{Adrian Dacko$^{1, \orcidlink{0009-0004-9386-4445}}$}
\address{$^1$ Wroc\l{}aw University of Environmental and Life Sciences,
Faculty of Environmental Engineering and Geodesy, Adrian.Piotr.Dacko@gmail.com, Adrian.Dacko@upwr.edu.pl}
\author[L. Oussi]{Lahcen Oussi$^{2, \orcidlink{0000-0001-9804-0761}}$}
\address{$^2$ Wroclaw University of Science and Technology, Faculty of Pure and Applied Mathematics, Lahcen.Oussi@pwr.edu.pl, oussimaths@gmail.com}

\begin{abstract}
We investigate the vacuum distribution of a family of partial sums of nonsymmetric position operators, depending on a real parameter $\lambda$, and acting on the discrete Fock space in the framework of V-monotone independence. We analyze the combinatorics of the moments of this distribution, and using its Cauchy--Stieltjes transform, we determine its exact form, consisting of a unique atom and an absolutely continuous part. Finally, we present computer-generated graphs that illustrate the distribution for several values of the intensity parameter $\lambda$.
\end{abstract}
\keywords{noncommutative probability, V-monotone Fock space, Poisson-type limit distribution, labeled noncrossing partitions.}
\subjclass[2020]{Primary 46L53, 60F05; Secondary 05A18.}

\maketitle

\section{Introduction}
\label{section:introduction}
In this paper, we study the vacuum distribution of a sum of noncommutative random variables, namely
\begin{equation}
\label{eq:GeneralSumPLT}
S_{N}(\lambda) \coloneqq \sum_{i=1}^{N} \left( \frac{ A_{i}^{+}+A_{i}^{-} } {\sqrt{N} } + \lambda A_{i}^{\circ} \right) \text{,} \qquad N \in \NNp \text{, } \lambda \in \RR \text{.}
\end{equation}
where $A_{i}^{+}$, $A_{i}^{-}$ and $A_{i}^{\circ}$, for $i \in \NNp = \{ 1, 2, 3, \ldots \}$, are creation, annihilation, and conservation operators, respectively, acting on a V-monotone analog of the full Fock space, called the \emph{V-monotone Fock space}. 

A \emph{noncommutative (or quantum) probability space} is a pair $\left( \mathcal{A}, \varphi \right)$, where $\mathcal{A}$ is a unital $C^*$-algebra with unit $\mathbf{1}$, and $\varphi \colon \mathcal{A} \to \CC$ is a \emph{state}, that is, a linear functional such that $\varphi(\mathbf{1})=1$ and $\varphi(aa^*) \geq 0$. Self-adjoint elements $a \in \mathcal{A}$ are called \emph{noncommutative random variables}.
A \emph{distribution} of a noncommutative random variable $a \in \mathcal{A}$ is a probability measure $\mu_a$ on $\RR$ such that
\begin{equation*}
\varphi(a^n) = \int_{-\infty}^{\infty} x^n \mu_a(\diff{x}) \text{}
\end{equation*}
for each positive integer $n$. One can show that such a measure exists and is uniquely determined by the moment sequence $\left( \varphi(a^n) \right)_{n=1}^{\infty}$.
Let $(\mathcal{A}_N, \varphi_N)_{n=1}^{\infty}$ and $(\mathcal{A}, \varphi)$ be noncommutative probability spaces and let $(a_N)_{n=1}^\infty$ be a family of random variables such that $a_N \in \mathcal{A}_N$ for each $N \in \NNp$. We say that $a_N$ \emph{converges in distribution} to a random variable $a \in \mathcal{A}$ as $N \to \infty$, if we have
\begin{equation*}
\lim_{N \to \infty} \varphi_N(a_N^n) = \varphi(a^n) \text{}
\end{equation*}
for all positive integers $n$, and we write $a_N \weaklyConvergesTo a$. Since all random variables are compactly supported here, the above condition is equivalent to the fact that
\begin{equation*}
\lim_{N \to \infty} \int_{-\infty}^{\infty} \psi(x) \mu_{a_N}(\diff{x}) = \int_{-\infty}^{\infty} \psi(x) \mu_a(\diff{x}) \text{}
\end{equation*}
for each continuous bounded function $\psi \colon \RR \to \RR$.
For more general tools and background on noncommutative probability and related topics, we refer the reader to \cite{HO, JOW2025, Mey, ANRS2006, Par, VDN1992} and references therein.

There are several notions of independence in this framework, the universal ones being: free independence of Voiculescu~\cite{DV1985}, monotone independence of Muraki~\cite{Mur1}, and Boolean independence~\cite{RSRW1997}. There are other notions of noncommutative independence, where some of them are mixtures of these natural ones. In the present paper, we develop and focus on the theory of the notion of independence called \emph{V-monotone}, introduced in~\cite{AD2020}. It can be considered as a combination of monotone and antimonotone independence, which is best seen on a Fock space level. We now recall the definition of this independence.

Let $(\mathcal{A}, \varphi)$ be a noncommutative probability space and let $(\mathcal{A}_{i})_{i\in I}$ be a family of subalgebras of $\mathcal{A}$, where $(I, \leq)$ is a totally ordered set. We assume that there exists a family $(\mathbf{1}_i)_{i\in I}$ such that for each $i \in I$, the random variable $\mathbf{1}_i \in \mathcal{A}_i$ is an \emph{inner unit} of $\mathcal{A}_i$, i.e., $\mathbf{1}_i a = a \mathbf{1}_i = a$ for all $a \in \mathcal{A}_i$. 

Let $\fWords{I}$ be the set of all finite words over the (possible infinite) alphabet $I$, i.e.,
\begin{equation*}
\fWords{I} \coloneqq \bigcup_{n = 0}^{\infty} I^n \text{,}
\end{equation*}
where $I^0 \coloneqq \{ \emptyWord \}$ contains only the empty word. Define
\begin{equation*}
\vWords{n, m}{I} \coloneqq \{ (i_1, \ldots, i_n )\in I^n: i_1 > \cdots > i_m< \cdots < i_n \} \text{,} \quad n \in \NNp \text{, } m \in [n] \text{,}
\end{equation*}
where $[n] = \{ 1, 2, \ldots, n \}$, with the convention that $\vWords{1, 1}{I} = I$, and for $n \in \NN$, set
\begin{equation*}
\vWords{n}{I} \coloneqq
\begin{cases}
\left\{ \emptyWord \right\} & \text{if $n=0$,} \\
\bigcup_{m=1}^{n} \vWords{n, m}{I} & \text{if $n > 0$.}
\end{cases}
\end{equation*}
For $i\in I$ and $(i_1, \ldots, i_n) \in \vWords{n}{I}$, we use the notation
\begin{equation*}
i \sim (i_1, \ldots, i_n) \text{ if } (i, i_1, \ldots, i_n) \in \vWords{n+1}{I} \text{ and } (i_1, \ldots, i_n) \sim i \text{ if } (i_1, \ldots, i_n, i) \in \vWords{n+1}{I} \text{.}
\end{equation*}

\begin{definition}
\label{definition:vMonotoneIndependence}
We say that the family $(\mathcal{A}_i)_{i \in I}$ is \emph{V-monotonically independent} with respect to $\varphi$, if for any $i \in I$, we have $\varphi(\mathbf{1}_i) = 1$, and for any sequence $(i_1, \ldots, i_n) \in I^n$ such that $i_k \neq i_{k+1}$ for $k \in [n-1]$ and for any $a_1 \in \mathcal{A}_{i_1}, \ldots, a_n \in \mathcal{A}_{i_n}$, the following conditions are satisfied:
\begin{itemize}
	\item[ (i)] $\varphi(a_1 \ldots a_n) = 0$ if $\varphi(a_1) = \cdots = \varphi(a_n) = 0$,
	\item[(ii)] for any $j \in [n]$ we have
	$$
	\varphi(a_1 \ldots a_{j-1} \mathbf{1}_{i_j} a_{j+1} \ldots a_n) = 
	\begin{cases}
	\varphi(a_1 \ldots a_{j-1} a_{j+1} \ldots a_n) & \text{if $j \leq r$,} \\
	0 & \text{otherwise,}
	\end{cases}
	$$
	whenever $\varphi(a_1) = \cdots = \varphi(a_{j-1}) = 0$ and $r \in [n]$ is such that $\vWords{r}{I} \ni (i_1, \ldots, i_r) \sim i_{r+1}$ (without the condition with `$\sim$' for $r=n$).
\end{itemize}
We say that random variables from a family $(a_i)_{i \in I}$ are V-monotone independent if there exists a V-monotone independent family of algebras $(\mathcal{A}_i)_{i \in I}$ such that $a_i \in \mathcal{A}_i$ for each $i \in I$.
\end{definition}
For more details on V-monotone independence, see~\cite{AD2020}. Throughout this paper, we only consider $I = \NN$ with the canonical total order.

In many models of noncommutative independence, one can define an associated Poisson distribution with the intensity parameter $\lambda > 0$. A Poisson limit theorem, also known as the law of small numbers, can be formulated as follows:
\begin{theorem}
\label{theorem:PLT}
Let $\{ a_{N,i} : N \in \NNp, i \in [N] \}$ be a family of operators with the following properties:
\begin{itemize}
    \item[ (i)] $a_{N, 1} \ldots, a_{N, N}$ are self-adjoint, \emph{independent}, and identically distributed random variables from a noncommutative probability space $(\mathcal{A}_N, \varphi_N)$ for each $N \in \NNp$,
    \item[(ii)] there exist constants $a, b \in \RR$ such that for each $i \in \NNp$ and any positive integer $n$, we have
\begin{equation*}
\lim \limits_{N \to \infty} N \varphi_N \left( a_{N, i}^n \right) = a^n b \text{.}
\end{equation*}
\end{itemize}
Then
\begin{equation*}
\frac{a_1 + \cdots + a_N}{\sqrt{N}} \weaklyConvergesTo a \cdot \gamma(\lambda) \text{,}
\end{equation*}
where $\gamma(\lambda)$ is a random variable with the Poisson distribution with intensity $\lambda > 0$.

\end{theorem}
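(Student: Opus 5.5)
The plan is to use the method of moments. Write $s_N := a_{N,1}+\cdots+a_{N,N}$. We read the scaling in the statement so that each summand has moments of order $1/N$, as hypothesis (ii) forces; for a nontrivial limit with the $\sqrt{N}$ normalization one has to rescale $a_{N,i}$ accordingly. We identify the intensity with $\lambda=b$. Since all variables are bounded and the limit is compactly supported, it suffices to prove
\begin{equation*}
\lim_{N\to\infty}\varphi_N(s_N^n)=a^n\,\varphi\bigl(\gamma(b)^n\bigr)
\end{equation*}
for every $n$. Here the Poisson moments on the right are, by definition, given by the partition-type formula that the argument produces.

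\textbf{Step 1.} Expand
\begin{equation*}
\varphi_N(s_N^n)=\sum_{(i_1,\ldots,i_n)\in[N]^n}\varphi_N(a_{N,i_1}\cdots a_{N,i_n}).
\end{equation*}
Group the words according to their kernel partition $\pi\in\part(n)$, where $k\sim l$ iff $i_k=i_l$, together with the relative order of the $|\pi|$ distinct labels. By the independence assumption (in our setting, Definition~\ref{definition:vMonotoneIndependence}), each mixed moment is a universal polynomial in the moments of the single variables. Because the variables are identically distributed, this polynomial depends only on $\pi$ and on the order pattern of the labels. For a fixed pattern there are $\binom{N}{|\pi|}\sim N^{|\pi|}/|\pi|!$ admissible words.

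\textbf{Step 2.} Show that every monomial in this universal polynomial is a product of moments $\varphi_N(a_{N,1}^{m})$ with at least $|\pi|$ factors. Each factor is $O(1/N)$ by (ii), so a monomial with $r$ factors contributes $O(N^{|\pi|-r})$ after summing over the words. Consequently only monomials with exactly $|\pi|$ factors survive, namely those in which each block of $\pi$ appears as a single moment $\varphi_N(a^{|V|})$. By (ii), such a term contributes $N^{|\pi|}\prod_{V\in\pi}\bigl(a^{|V|}b/N\bigr)=a^n b^{|\pi|}$. The limit is therefore
\begin{equation*}
a^n\sum_{\pi}w(\pi)\,b^{|\pi|},
\end{equation*}
where $w(\pi)\ge0$ counts, with the normalization $1/|\pi|!$, the label orderings for which the independence rule returns the full block product. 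In the V-monotone case these are the labelled noncrossing partitions compatible with the V-shaped ordering of labels.

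\textbf{Step 3.} Identify $\sum_\pi w(\pi)b^{n|\pi|}$, more precisely $\sum_\pi w(\pi)b^{|\pi|}$, with the $n$-th moment of the Poisson law $\gamma(b)$ of the given independence. Two routes are available. One can take this as the definition of the Poisson law, checking positivity by realizing it on the Fock space as the vacuum moment of $A^++A^-+A^\circ$-type operators. Alternatively, one can compare with the known moment formula. Dilation by $a$ multiplies the $n$-th moment by $a^n$, which gives the factor $a^n$.

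The main obstacle is Step 2. One must verify that the independence rule never produces a monomial with fewer than $|\pi|$ factors. This would mean a single moment merging two distinct labels, which independence forbids, since different labels are always separated into different factors. One must also verify that the coefficient of the top-degree monomial is nonnegative and independent of $N$. I would argue this by induction on $n$ via condition (ii) of Definition~\ref{definition:vMonotoneIndependence}: centre each factor as $a=(a-\varphi(a)\mathbf 1_i)+\varphi(a)\mathbf 1_i$ and peel off inner units. The centred terms then produce only products over refinements of interval-like sub-blocks, and each such product has at least $|\pi|$ factors.
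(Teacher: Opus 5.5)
\textbf{Status of the statement.} The paper does not prove this statement. It is quoted in the introduction as a schematic template, with references to the free, Boolean and monotone cases. As printed it is also inconsistent:
\begin{itemize}
\item Under (ii), $\varphi_N(s_N^n)$ stays bounded, so $s_N/\sqrt{N}$ converges to $\delta_0$.
\item The summands are written $a_i$ instead of $a_{N,i}$.
\item $\lambda$ is never tied to $b$.
\end{itemize}
Your repair is the right one: take $s_N=\sum_i a_{N,i}$ without the $\sqrt{N}$, and intensity $\lambda=b$. This is also how the paper actually uses the scheme in Section~\ref{section:Poisson}. There $S_N(\lambda)=\sum_i a_{N,i}$ has no further normalization, and Proposition~\ref{proposition:partialPLT} gives $a=\lambda$, $b=\lambda^{-2}$.

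Under this reading your argument is correct. It is the standard moment-method proof behind the cited results. You group words by kernel partition and order pattern, and use that mixed moments are universal multilinear polynomials in single-algebra moments. You count $\binom{N}{|\pi|}$ words per pattern and keep only $\sigma=\pi$. Universality for V-monotone independence does follow from Definition~\ref{definition:vMonotoneIndependence}: write each factor as a centred part plus a multiple of its inner unit, and remove the leftmost unit via (ii), since all preceding factors are then centred. Then merge same-algebra neighbours and recurse; (i) kills the fully centred terms.

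Your ``main obstacle'' is lighter than you suggest. Each monomial uses every position exactly once, and each factor is a moment inside one $\mathcal{A}_i$, so the number of factors is at least the number of labels. Nonnegativity of the top coefficient is not needed. Positivity of the limit is automatic, since it is a limit of moment sequences of states. For Step~3, the cleanest identification uses the universality you proved. Apply it to $N$ independent copies of $(1-b/N)\delta_0+(b/N)\delta_1$, whose limit is by definition $\gamma(b)$, and then dilate by $a$.

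\textbf{Comparison with the paper's own route.} The paper's V-monotone analogue, Theorem~\ref{thm:limitMomentFormula}, takes a genuinely different path and never uses the abstract axioms. In the explicit Fock model, Lemma~\ref{lemLS} makes every vacuum moment of a word in $A_i^{\pm},A_i^{\circ}$ equal to $0$ or $1$, according to V-monotonically labeled noncrossing partitions. This turns $\varphi(S_N^n(\lambda))$ into an exact weighted count (Theorem~\ref{thm:exactMomentFormula}). The only asymptotic step is discarding non-injective labelings, the model-level counterpart of your discarding of strict refinements $\sigma<\pi$.

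What each approach buys:
\begin{itemize}
\item Yours gives universality: every family satisfying (i)--(ii) has limit moments $a^n m_n(b)$ depending only on $(a,b)$. But it leaves the top coefficients implicit. Your claim that they select exactly the V-monotone ordered noncrossing partitions is asserted, not derived, and Lemma~\ref{lemLS} is essentially what supplies it.
\item The paper's gives exact finite-$N$ formulas and explicit combinatorics for the later generating-function work, but only for one model.
\end{itemize}
The paper's summands also have mean exactly $0$, so (ii) holds only for $n\ge 2$. Your argument still applies, because singleton blocks then contribute exactly zero. This yields the $\mathcal{OV}^{2+}$ sums in \eqref{eq:limitMomentFormula}.
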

In noncommutative probability, this result was established, among other things, for free~\cite[Theorem~4]{Sp}, Boolean~\cite[Theorem~3.5]{RSRW1997}, and monotone independence~\cite[Theorem~4.1]{Mur1}. There are other noncommutative analogs of the law of small numbers, for example~\cite{{BLS1996},{BW2001},{FL1999},{RLRS2006},{bmPoisson}}. Basically, in this type of Poisson limit theorem, one determines the weak limit (as $N \to \infty$) of the $N$th convolution power of the binomial distributions for a single trial (i.e., Bernoulli distributions) with the probability of success $\lfrac{\lambda}{N}$ (certain generalization in~\cite{bmPoisson}).

The combinatorics of limit moments of the Poisson distribution, which are expressed by means of certain class of partitions related to the notion of independence, is also studied. For instance, in the case of free independence, we have the class of noncrossing partitions (called admissible in~\cite{Sp}) and in monotone probability we have monotone noncrossing partitions (Muraki~\cite{Mur1} mentions a kind of noncrossing partitions, but for a more precise description, see~\cite[Theorem~8.1]{RLRS2006} for $m=1$).

The second approach to the Poisson limit theorem, which we take here, is motivated by~\cite[Theorem on p.~74]{Mey}. The construction using operator sum of type~\eqref{eq:GeneralSumPLT} was first established in the noncommutative case by Muraki~\cite{Mur0} for monotone independence (see also Crismale et al.~\cite{CGW2021} for weakly monotone Fock space), by Oussi and Wysocza\'{n}ski~\cite{LOJW2024} for bm-independence, and by Oussi for free independence~\cite{LO2023}. In this paper, we extend the construction to the setting of noncommutative V-monotone independence by constructing an appropriate model on the associated discrete V-monotone Fock space. This leads to a new family of Poisson-type distributions reflecting the combinatorial and algebraic features of V-monotone independence.

The paper is structured as follows. In Section~\ref{section:combinatorics}, we review some elementary information about noncrossing partitions of a finite set.

In Section~\ref{section:Fock}, we recall the definition of V-monotone Fock space and certain operators that act on it. Some properties of these operators are given, and their mixed moments are studied. These mixed moments are closely related to partitions, with a focus on V-monotonically labeled noncrossing partitions.
Section~\ref{section:Poisson} contains our main results, the V-monotone Poisson-type limit theorem on the V-monotone Fock space. We study the vacuum distribution of the operator $S_{N}(\lambda)$. Its exact and limit moments are given combinatorially by means of V-monotone labeled noncrossing partitions.
In Section~\ref{section:MGF}, we investigate  the moment generating function of the operator $S_{N}(\lambda)$ with its limit distribution as $N\to\infty$.
In  Section~\ref{section:limitMeasure}, using the Stieltjes inversion formula, we obtain the limit measure $\mu_{\lambda}$ of the nonsymmetric V-monotone operator $S_{N}(\lambda)$. Finally, using the \texttt{matplotlib.pyplot} module in Python~3, we generate some graphs illustrating the limit distribution for several values of intensity $\lambda$, and the position and weight of its only atom as functions of $\lambda$.

\section{Combinatorics of noncrossing partitions and V-monotone labelings}
\label{section:combinatorics}

In this section, we review some foundational combinatorial concepts related to noncrossing partitions, with a particular focus on V-monotone labelings. These tools play a key role in the results discussed later in the paper. For further details, we refer the reader to~\cite{ANRS2006}.

By a {\it partition} of the set $[n] \coloneqq \{ 1, 2, \ldots , n \}$, we mean a collection $\pi = \{ B_1, B_2, \ldots , B_k \}$ of its nonempty and disjoint subsets whose union is $[n]$. Elements of a partition are called {\it blocks}. The set of all partitions of $[n]$ will be denoted by $\part(n)$. A partition $\pi = \{ B_1, \ldots, B_k \}$ is called \emph{noncrossing} if there are no elements $a < c < b < d$ such that $\{ a, b \}\subset B_i$ and $\{c,d\}\subset B_j$ for some $i\neq j$. The set of all noncrossing partitions of $[n]$ will be denoted by $\noncr(n)$ (sometimes we write $\noncr(n, k)$ to denote its subset containing all partitions with exactly $k$ blocks). A block $B \in \pi$ is called \emph{a singleton} if it contains exactly one element, i.e., $\card{B} = 1$, where $\card{.}$ denotes cardinality.

The elements of a block $B \in \pi$ are called \emph{legs}. For a block $B \in \pi$ that is not a singleton, the rightmost leg of $B$ is defined as the greatest element in $B$, the leftmost leg is the least element, and the remaining legs are called middle legs. We denote by $\ml{B}$ the number of middle legs in the block $B$ and by $\ml{\pi}$ the total number of middle legs in the partition $\pi$.
A noncrossing partition $\pi = \{ B_1, \ldots, B_k \} \in \noncr(n, k)$ in which every block satisfies $\card{B_i} \geq 2$ for $i=1, \ldots, k$ will be denoted by $\noncrk{2+}(n)$. We write $\noncrk{2+}(n,k)$ when specifying that the partition has exactly $k$ blocks.

We say that a block $B$ of a partition $\pi$ is \emph{inner} with respect to a block $B' \in \pi$ and we write $B' \prec_{\pi} B$ if $l_{q-1} < \min B \leq \max B < l_{q}$ for some $q \in [p]$, where $B' = \{ l_0 < \cdots < l_p \}$ for $p \in \NN$. In that case, we say that $B'$ is \emph{outer} with respect to $B$. If a block $B \in \pi$ has no outer blocks, then it is called an \emph{outer block} in $\pi$.
There is a natural partial order on the blocks of $\pi$, induced by the above relation, i.e., $B' \preceq_{\pi} B$ if and only if $B' \prec_{\pi} B$ or $B' = B$.

\section{Discrete V-monotone Fock space and related operators}
\label{section:Fock}
In this section, we recall the definition of the discrete V-monotone Fock space~\cite[Definition~6.6]{AD2020}, which extends monotone and antimonotone Fock space constructions by incorporating ``V-shaped'' sequences. We also introduce the associated creation, annihilation and conservation operators acting on this space, and establish some of their fundamental properties that are essential for the analysis of the distribution of~\eqref{eq:GeneralSumPLT}.

Let $\mathcal{H}$ be a Hilbert space with an orthonormal basis $\{\Omega\} \cup \{e_i : i \in I\}$, where $\Omega$ is the vacuum vector. Let
\begin{equation*}
e_{\mathbf{i}} = \begin{cases}
\Omega & \text{for $\mathbf{i} = \emptyWord$,} \\
\vChar(\mathbf{i}) \cdot e_{i_1} \otimes \cdots \otimes e_{i_n} & \text{for $\mathbf{i} = (i_1, \ldots, i_n) \in I^n$, $n \in \NNp$,}
\end{cases}
\end{equation*}
where $\vChar \colon \fWords{I} \to \{ 0, 1 \}$ is the characteristic function of the set
\begin{equation*}
\vWords{}{I} \coloneqq \bigcup_{n=0}^\infty \vWords{n}{I} \text{,}
\end{equation*}
where $\vWords{}{I}$ is the set of all finite words over the alphabet $I$.

\begin{definition}[Discrete V-monotone Fock space]
The \emph{discrete V-monotone Fock space}, denoted by $\vDiscreteFockSpace$, is defined as the direct sum of one-dimensional Hilbert spaces
\begin{equation*}
\vDiscreteFockSpace \coloneqq \bigoplus_{\mathbf{i} \in \vWords{}{I}} \CC \, e_{\mathbf{i}} \text{,}
\end{equation*}
equipped with the canonical inner product. Recall that the set of indices $\vWords{n}{I}$ consists of all multi-indices $(i_1, \ldots, i_n) \in I$ satisfying the V-monotone ordering condition. The vacuum state $\varphi$ on $\vDiscreteFockSpace$ is given by
\[
\varphi(a) := \langle a \, \Omega, \Omega \rangle, \quad a \in \mathcal{B}(\vDiscreteFockSpace).
\]
\end{definition}

For each $i \in I$, we define the \emph{V-monotone (left) creation operator} $A_i^+$ associated with $e_i$ by the continuous linear extension of
\begin{equation*}
A_i^{+} \, e_{ \mathbf{i} } = e_{ (i, \mathbf{i}) } \text{,} \quad \mathbf{i} \in \vWords{}{I} \text{,}
\end{equation*}
where $(i, \mathbf{i})$ is the concatenation of the words $(i)$ and $\mathbf{i}$.

Its adjoint, the \emph{V-monotone (left) annihilation operator} $A_i^-$ associated with $e_i$, is given by the continuous linear extension of
\begin{equation*}
A^{-}_i \, e_{\mathbf{i}} = \begin{cases}
0 & \text{for $\mathbf{i} = \emptyWord$,} \\
\delta_{i, j} e_{\mathbf{i}_0} & \text{for $\mathbf{i} = (j, \mathbf{i}_0) \in \vWords{}{I}$,}
\end{cases}
\end{equation*}
where $\mathbf{i}_0 \in \vWords{}{I}$ and $\delta_{i, j}$ is the Kronecker delta. The \emph{V-monotone conservation operator} $A_i^{\circ}$ associated with $e_i$ is defined by its action on the vacuum and tensor basis vectors as the continuous linear extension of
\begin{equation*}
A^{\circ}_i \, e_{\mathbf{i}} = \begin{cases}
0 & \text{for $\mathbf{i} = \emptyWord$,} \\
\delta_{i, j} e_{\mathbf{i}} & \text{for $\mathbf{i} = (j, \mathbf{i}_0) \in \vWords{}{I}$.}
\end{cases}
\end{equation*}
The creation and annihilation operators are mutually adjoint, i.e., 
\[
(A_i^+)^* = A_i^-,
\]
and the conservation operator $A_i^{\circ} \coloneqq A^{+}_i A^{-}_i$ is self-adjoint. Using the above definitions of operators, one can establish some of their basic properties.
\begin{lemma}
\label{cr}
The creation, annihilation and conservation operators satisfy the following relations
\begin{itemize}
\item $A_{i}^{-}A_{j}^{+}=A_{i}^{\circ}A_{j}^{+}=A_{i}^{-}A_{j}^{\circ}=A_{i}^{\circ}A_{j}^{\circ}=0, i\neq j$,
\item $A_{i}^{+}A_{i}^{+}=A_{i}^{-}A_{i}^{-}=A_{i}^{\circ}A_{i}^{-}=A_{i}^{+}A_{i}^{\circ}=0$,
\item 

$(A_{i}^{\circ})^{m}A_{i}^{+}=A_{i}^{+}$ for any $m\in\mathbb{N}$.
\end{itemize}
\end{lemma}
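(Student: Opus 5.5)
We would prove every identity by evaluating both sides on the orthonormal basis $\{e_{\mathbf{i}} : \mathbf{i} \in \vWords{}{I}\}$ of $\vDiscreteFockSpace$. All the operators involved are bounded and defined as continuous linear extensions, so agreement on basis vectors is enough. The one convention we will use throughout is the one built into the definition of $e_{\mathbf{i}}$: if a word $\mathbf{i} \in \fWords{I}$ is not V-monotone, then $\vChar(\mathbf{i}) = 0$ and $e_{\mathbf{i}} = 0$. In particular $A_i^+ e_{\mathbf{i}} = e_{(i,\mathbf{i})}$ vanishes whenever $(i,\mathbf{i}) \notin \vWords{}{I}$.

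The key combinatorial observation comes from the strict inequalities in the definition of $\vWords{n,m}{I}$: a V-monotone word never contains two equal adjacent letters. Two consequences follow. First, $(i,i,\mathbf{i}_0)$ is never in $\vWords{}{I}$. Second, if $(i,\mathbf{i}_0) \in \vWords{}{I}$ and $\mathbf{i}_0$ is nonempty, then the first letter of $\mathbf{i}_0$ is different from $i$. We also record that $A_i^{-}$ and $A_i^{\circ}$ annihilate every basis vector whose first letter is not $i$, and that $A_i^{\circ}$ fixes every basis vector whose first letter is $i$.

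With these facts we check the relations.

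\emph{First bullet ($i \neq j$).} $A_j^+ e_{\mathbf{i}}$ is either $0$ or a basis vector starting with $j$. Likewise $A_j^{\circ} e_{\mathbf{i}}$ is either $0$ or a basis vector starting with $j$. In both cases, applying $A_i^-$ or $A_i^{\circ}$ gives a factor $\delta_{i,j} = 0$.

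\emph{Second bullet.} We treat the four products in turn.
\begin{itemize}
\item $A_i^+A_i^+ e_{\mathbf{i}} = e_{(i,i,\mathbf{i})} = 0$.
\item For $A_i^+A_i^{\circ}$: the vector $A_i^{\circ} e_{\mathbf{i}}$ is nonzero only when $\mathbf{i}$ starts with $i$. In that case $A_i^+$ produces a word with two adjacent letters $i$, which gives $0$.
\item For $A_i^-A_i^-$ and $A_i^{\circ}A_i^-$: the vector $A_i^- e_{\mathbf{i}}$ is nonzero only when $\mathbf{i} = (i,\mathbf{i}_0)$, and then it equals $e_{\mathbf{i}_0}$. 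Here $\mathbf{i}_0$ is either empty, which both $A_i^-$ and $A_i^{\circ}$ kill, or it starts with a letter different from $i$, which both operators also kill.
\end{itemize}

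\emph{Third bullet.} We argue by induction on $m$; the case $m=0$ is trivial. It suffices to show $A_i^{\circ}A_i^+ = A_i^+$. On $e_{\mathbf{i}}$ there are two cases. If $(i,\mathbf{i}) \in \vWords{}{I}$, then $A_i^+ e_{\mathbf{i}}$ is a basis vector starting with $i$, so $A_i^{\circ}$ fixes it. Otherwise both sides are $0$. Then $(A_i^{\circ})^{m}A_i^+ = (A_i^{\circ})^{m-1}A_i^+ = A_i^+$.

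There is no real obstacle here. The only point needing care is to consistently use the vanishing convention $e_{\mathbf{i}}=0$ for non-V-monotone words, together with the no-repeated-adjacent-letter property. These two facts are what make the ``same index'' relations hold.
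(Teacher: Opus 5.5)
Your proof is correct and matches the paper's approach: the paper gives no written proof and only says the relations follow from the definitions, and your verification on the basis vectors $e_{\mathbf{i}}$ is exactly that check. Your two key facts do the work for the same-index relations: the strict inequalities in $\vWords{n,m}{I}$ forbid equal adjacent letters, and $e_{\mathbf{i}}=0$ for non-V-monotone words.
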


Using the same idea as for the bm-case~\cite{LOJW2024} and~\cite[Proposition~6.7]{AD2020}, one can prove the following proposition.
\begin{proposition}
\label{proposition:Vindependence}
The algebras $\mathcal{A}_i \coloneqq \mathrm{alg} \{A_{i}^{+}, A_{i}^{-}\}$ generated by the creation and annihilation operators are V-monotone independent in the probability space $(\mathcal{A}, \varphi)$, where $\mathcal{A}$ is the $C^{*}$-algebra of all bounded operators in $\mathcal{F}_{\mathrm{vm}}^{d}(\mathcal{H})$.
\end{proposition}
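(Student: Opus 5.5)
We verify the two conditions of Definition~\ref{definition:vMonotoneIndependence} directly on basis vectors. The inner unit of $\mathcal{A}_i$ will be $\mathbf{1}_i := A_i^{-}A_i^{+}$, the projection onto the closed span of those $e_{\mathbf{j}}$ with $(i,\mathbf{j}) \in \vWords{}{I}$, i.e.\ with $i \sim \mathbf{j}$. Since $A_i^{-}A_i^{+}\Omega = \Omega$, we get $\varphi(\mathbf{1}_i)=1$. The identities $\mathbf{1}_i A_i^{\pm} = A_i^{\pm}\mathbf{1}_i = A_i^{\pm}$ follow from Lemma~\ref{cr} and a direct check on basis vectors: $A_i^{+}e_{\mathbf{j}}\neq 0$ forces $i\sim\mathbf{j}$, and $A_i^{-}e_{\mathbf{j}}\ne 0$ forces $\mathbf{j}=(i,\mathbf{j}_0)$ with $i\sim\mathbf{j}_0$.

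By Lemma~\ref{cr} ($A_i^{\pm}A_i^{\pm}=0$, $A_i^{-}A_i^{+}=\mathbf{1}_i$, $A_i^{+}A_i^{-}=A_i^{\circ}$), every element of $\mathcal{A}_i$ is a linear combination of $\mathbf{1}_i$, $A_i^{+}$, $A_i^{-}$ and $A_i^{\circ}$. Of these, $\varphi(\mathbf{1}_i)=1$ and the other three have vacuum expectation zero. Hence $a\in\mathcal{A}_i$ with $\varphi(a)=0$ has the form
\[
a=\alpha A_i^{+}+\beta A_i^{-}+\gamma A_i^{\circ}.
\]
This gives the key structural fact: such a centered $a$ maps $e_{\mathbf{j}}$ into the span of $e_{(i,\mathbf{j})}$, $e_{\mathbf{j}_0}$ (when $\mathbf{j}=(i,\mathbf{j}_0)$) and $e_{\mathbf{j}}$ (when $\mathbf{j}$ starts with $i$). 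In other words, $a$ only touches the first letter of the word, and only if that letter is $i$ or $i$ is being prepended.

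For condition (i), we expand $a_1\cdots a_n\Omega$ with centered $a_k\in\mathcal{A}_{i_k}$ and $i_k\neq i_{k+1}$, processing from the right.
\begin{itemize}
\item $a_n\Omega=\alpha_n e_{(i_n)}$.
\item Inductively, the vector obtained after applying $a_k,\dots,a_n$ is a combination of basis words whose first letter is $i_k$. The reason is that $a_k$ applied to a word starting with $i_{k+1}\neq i_k$ can only create, because the annihilation and conservation parts vanish by the first bullet of Lemma~\ref{cr}.
\item Consequently no annihilation ever occurs, every surviving word is nonempty, and its inner product with $\Omega$ is $0$.
\end{itemize}

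For condition (ii), we insert $\mathbf{1}_{i_j}$ with $a_1,\dots,a_{j-1}$ centered. We must identify when $\mathbf{1}_{i_j}$ acts as the identity on the vector $v=a_{j+1}\cdots a_n\Omega$ in the relevant component. The vector $v$ has the form
\[
v=\sum c_{\mathbf{w}}\,e_{\mathbf{w}}.
\]
After $\mathbf{1}_{i_j}$ projects onto words $\mathbf{w}$ with $i_j\sim\mathbf{w}$, the centered operators $a_1,\dots,a_{j-1}$ are applied. By the argument of (i), once we act on a word whose first letter differs from the current index, the centered operators can only create. Hence the vacuum component after applying $a_1\cdots a_{j-1}$ comes only from components of $v$ that are annihilated step by step. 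Tracking this, the vacuum-reaching components of $a_{j}\cdots$ correspond exactly to words which are reversals of the index prefix $(i_{j-1},\dots,i_1)$-compatible pieces.

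The main obstacle is to show the following equivalence. The projection $\mathbf{1}_{i_j}$ preserves the relevant component exactly when $(i_1,\dots,i_j)$ extends to a V-shaped word, i.e.\ when $j\le r$. It kills it when $j>r$, because then the word $(i_1,\dots,i_r,i_{r+1})$ is not V-shaped. Concretely, the only words contributing to $\langle\cdot,\Omega\rangle$ are $e_{(i_{j},\dots)}$-type words built from the prefix letters. The V-condition $i_1>\cdots>i_m<\cdots<i_r$ is precisely the condition $\vChar=1$ on the corresponding concatenation. So for $j\le r$ the projection is the identity on these words, and for $j>r$ the relevant word has $\vChar=0$, giving $0$.

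I would follow the analogous bm-argument of \cite{LOJW2024} and \cite[Proposition~6.7]{AD2020} for this bookkeeping, doing an induction on $j$ that splits by whether $a_{j-1}$ contributes its $A^{-}$ part.
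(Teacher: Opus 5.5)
Your argument for condition (i) is correct. However, the choice of inner unit is wrong, and condition (ii), the actual content of the proposition, is not proved.

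\textbf{The inner unit.} The element $\mathbf{1}_i = A_i^{-}A_i^{+}$ is \emph{not} an inner unit of $\mathcal{A}_i$. By Lemma~\ref{cr}, $A_i^{+}A_i^{+}=0$, so $\mathbf{1}_iA_i^{+}=A_i^{-}A_i^{+}A_i^{+}=0\neq A_i^{+}$. Dually, $A_i^{-}\mathbf{1}_i=0\neq A_i^{-}$.

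Your basis-vector check only covers $A_i^{+}\mathbf{1}_i$ and $\mathbf{1}_iA_i^{-}$. The range of $A_i^{+}$ consists of words beginning with $i$, and no such word $\mathbf{w}$ satisfies $i\sim\mathbf{w}$.

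Take instead $\mathbf{1}_i=A_i^{-}A_i^{+}+A_i^{+}A_i^{-}$. This is the projection onto the span of those $e_{\mathbf{w}}$ with $i\sim\mathbf{w}$ or with $\mathbf{w}$ beginning with $i$. Using $A_i^{+}A_i^{-}A_i^{+}=A_i^{+}$ (Lemma~\ref{cr}) and its adjoint, it is a two-sided unit for $A_i^{\pm}$, and $\varphi(\mathbf{1}_i)=1$. Your description of centered elements survives unchanged, since $\mathcal{A}_i$ is still spanned by $A_i^{\pm}$, $A_i^{\circ}$ and $A_i^{-}A_i^{+}$.

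\textbf{Condition (ii).} The passage on ``vacuum-reaching components'' is a heuristic, and the induction is deferred to the literature. The paper itself only points to \cite{LOJW2024} and \cite[Proposition~6.7]{AD2020}, so this is exactly the part you needed to supply. It is short if you move the centered factors across the inner product.

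Write $a_k=\alpha_kA_{i_k}^{+}+\beta_kA_{i_k}^{-}+\gamma_kA_{i_k}^{\circ}$. Then each $a_k^{*}$ is a centered element of $\mathcal{A}_{i_k}$ with $A_{i_k}^{+}$-coefficient $\overline{\beta_k}$. Your argument for (i) therefore gives $a_{j-1}^{*}\cdots a_1^{*}\Omega=\overline{\beta_1\cdots\beta_{j-1}}\,e_{(i_{j-1},\ldots,i_1)}$. With $v=a_{j+1}\cdots a_n\Omega$ and $\mathbf{1}_{i_j}$ self-adjoint,
\[
\varphi(a_1\cdots a_{j-1}\mathbf{1}_{i_j}a_{j+1}\cdots a_n)=\beta_1\cdots\beta_{j-1}\,\langle v,\mathbf{1}_{i_j}e_{(i_{j-1},\ldots,i_1)}\rangle .
\]

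For $j\ge 2$, the word $(i_{j-1},\ldots,i_1)$ begins with $i_{j-1}\neq i_j$. Hence $\mathbf{1}_{i_j}e_{(i_{j-1},\ldots,i_1)}=\vChar(i_j,\ldots,i_1)\,e_{(i_{j-1},\ldots,i_1)}$. For $j=1$ one simply has $\mathbf{1}_{i_1}\Omega=\Omega$.

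V-shapedness is preserved under reversal and under taking prefixes. So $\vChar(i_j,\ldots,i_1)=1$ exactly when $j\le r$, where $r$ is the largest index with $(i_1,\ldots,i_r)\in\vWords{r}{I}$. The same computation without $\mathbf{1}_{i_j}$ gives $\varphi(a_1\cdots a_{j-1}a_{j+1}\cdots a_n)=\beta_1\cdots\beta_{j-1}\langle v,e_{(i_{j-1},\ldots,i_1)}\rangle$.

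That is (ii), with no induction and no assumption on $a_{j+1},\ldots,a_n$. The summand $A_{i_j}^{+}A_{i_j}^{-}$ is invisible in this computation, which is why your incorrect unit caused no visible trouble in the sketch. Still, the definition requires a genuine inner unit.
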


In this paper, we want to study the V-monotone Poisson-type limit theorem for a sequence of operators~\eqref{eq:GeneralSumPLT}. In order to compute their exact and limit moments, we now study mixed moments of the form
\begin{equation*}
\label{eq:mixedMoment}
\varphi \left( A^{\varepsilon_1}_{i_1} \cdot \ldots \cdot A^{\varepsilon_n}_{i_n} \right) \text{,}
\end{equation*}
where $\mathbf{i} = \left( i_1, \ldots, i_n \right) \in I^n$ and $\pmb{\varepsilon} = \left( \varepsilon_1, \ldots, \varepsilon_n \right) \in \{ -, 0, + \}^n$, $n \in \NNp$.

\def\riordan#1{\mathcal{R}_{#1}}
\def\seps#1{\sgn(\varepsilon_{#1})}
First, we provide a necessary condition for sequences $\pmb{\varepsilon}$ for which a mixed moment of the above form is nonzero.
\begin{definition}
For $n \in \NN$, we denote by $\riordan{n}$ the set of all sequences $(\varepsilon_1, \ldots, \varepsilon_n) \in \{ -, \circ, + \}^n$ that meet the following conditions:
\begin{itemize}
    \item[(i)]\label{riordan1} $\seps{1} + \cdots + \seps{n} = 0$,
    \item[(ii)]\label{riordan2} $\seps{1} + \cdots + \seps{l} \leq 0$, for all $l \in [n]$,

    \item[(iii)]\label{riordan3} $\varepsilon_{l} = -$ whenever $\seps{1} + \cdots + \seps{l-1} = 0$, for all $l \in [n]$ (for $l=1$, the empty sum equals $0$ by convention),
\end{itemize}
where
\begin{equation*}
\seps{l} = \begin{cases}
-1 & \text{if $\varepsilon_l = -$,} \\
0 & \text{if $\varepsilon_l = \circ$,} \\
1 & \text{if $\varepsilon_l = +$,}
\end{cases}
\end{equation*}
for $l \in [n]$. For $n=0$, we put $\riordan{0} = \{ \emptyWord \}$, i.e., the singleton of the empty word.
\end{definition}

\begin{proposition}
\label{proposition:Riordan}
For any $n \in \NN$, if $\varphi \left( A^{\varepsilon_1}_{i_1} \cdot \ldots \cdot A^{\varepsilon_n}_{i_n} \right) \neq 0$ then $(\varepsilon_1, \ldots, \varepsilon_n) \in \riordan{n}$.
\end{proposition}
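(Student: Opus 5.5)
We compute the vector $A^{\varepsilon_1}_{i_1} \cdots A^{\varepsilon_n}_{i_n}\Omega$ by applying the operators from right to left, starting at the vacuum, and track the length of the current basis vector. Every operator sends a basis vector $e_{\mathbf{i}}$ either to $0$ or to a single basis vector. Hence the product applied to $\Omega$ is either $0$ or a single vector $e_{\mathbf{j}}$. We have $\varphi(\cdot)\neq 0$ only in the second case with $\mathbf{j}=\emptyWord$. So we assume throughout that no intermediate vector vanishes and that the final vector is $\Omega$.

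The key bookkeeping step is the following. When $A^{\varepsilon_l}_{i_l}$ acts nontrivially on a basis vector of length $k$, the result has length $k+1$ for $\varepsilon_l=+$, length $k-1$ for $\varepsilon_l=-$, and length $k$ for $\varepsilon_l=\circ$. In every case the new length is $k+\seps{l}\cdot(-1)\cdot(-1)$, i.e.\ $k$ plus the signed increment of the creation/annihilation type. The nonzero intermediate vector obtained after applying $A^{\varepsilon_{l}}_{i_l}\cdots A^{\varepsilon_n}_{i_n}$ to $\Omega$ therefore has length
\[
L_l=\sum_{r=l}^{n}\seps{r}\geq 0 .
\]
We need $L_l\ge 0$ for every $l$, since lengths are nonnegative.

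We then translate these facts into conditions (i)--(iii).
\begin{itemize}
\item[(i)] The final vector is $\Omega$, so $L_1=0$, which gives $\seps{1}+\cdots+\seps{n}=0$.
\item[(ii)] Using $L_1=0$, we get $\seps{1}+\cdots+\seps{l}=L_1-L_{l+1}=-L_{l+1}\le 0$ for $l<n$. For $l=n$ the sum equals $0$ by (i).
\item[(iii)] Suppose $\seps{1}+\cdots+\seps{l-1}=0$. Then $L_l=0$, so $A^{\varepsilon_{l+1}}_{i_{l+1}}\cdots A^{\varepsilon_n}_{i_n}\Omega$ has length $L_{l+1}=L_l-\seps{l}=-\seps{l}$. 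We must rule out $\varepsilon_l\in\{\circ,+\}$. If $\varepsilon_l=+$, then $L_{l+1}=-1<0$, which is impossible. If $\varepsilon_l=\circ$, then $A^\circ_{i_l}$ acts on a vector of length $0$, namely $\Omega$, and $A^\circ_{i_l}\Omega=0$ by definition. This contradicts nonvanishing. Hence $\varepsilon_l=-$.
\end{itemize}

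There is one consistency point to watch. The condition is phrased with partial sums from the left, whereas the operators act from the right. The identity $\sum_{r<l}\seps{r}=L_1-L_l=-L_l$ handles this. The apparent sign reversal, with an annihilator read first on the left, is correct: the leftmost operator must annihilate the last remaining letter back to $\Omega$.

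The main obstacle is only this careful sign and indexing bookkeeping. Two small facts should be noted explicitly:
\begin{itemize}
\item A creation can produce a non-V word, in which case $e_{(i,\mathbf{i})}=0$. This only strengthens the vanishing and does not affect the necessary conditions.
\item The case $n=0$ is trivial.
\end{itemize}
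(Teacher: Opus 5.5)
Your proof is correct and is essentially the paper's argument. Both proofs grade the Fock space by tensor length and note that the word shifts length by $\sum_{r}\textnormal{sgn}(\varepsilon_r)$. Condition (i) then follows from orthogonality to $\Omega$, condition (ii) from the impossibility of negative length, and condition (iii) from $A^{\circ}_{i}\Omega=0$, with $\varepsilon_l=+$ excluded again by length. The differences are cosmetic: you argue directly by tracking the lengths $L_l$ of the intermediate basis vectors, whereas the paper argues by contrapositive using the subspaces $\mathcal{H}_k$; also, the factor $(-1)\cdot(-1)$ in your length formula is superfluous, since the new length is just $k+\textnormal{sgn}(\varepsilon_l)$.
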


\begin{proof}
Let $\mathcal{H}_k$, $k \in \ZZ$, be the closed linear subspace of $\vDiscreteFockSpace$ spanned by all tensors of length $k$ (we put $\mathcal{H}_0 = \CC \, \Omega$ and $\mathcal{H}_k = \{ 0 \}$ for all negative $k$). Consider an operator $b = A^{\varepsilon_1}_{i_1} \cdot \ldots \cdot A^{\varepsilon_n}_{i_n}$. Let us first observe that the image of $\mathcal{H}_k$ under $b$ is a subspace of
\begin{equation*}
\mathcal{H}_{k + \seps{1} + \cdots + \seps{n}} \text{,}
\end{equation*}
which can be easily proved by induction on $n$ using only the definitions of creation, conservation, and annihilation operators.

Assume that $(\varepsilon_1, \ldots, \varepsilon_n) \notin \riordan{n}$. We consider three cases. If Condition~(i) does not hold, then $b \, \Omega \perp \Omega$ and the moment vanishes. If Condition~(i) holds but Condition~(ii) fails, then $r = \seps{l+1} + \cdots + \seps{n} < 0$ for some $l \in [0, n-1]$ and $A^{\varepsilon_{l+1}}_{i_{l+1}} \cdots A^{\varepsilon_n}_{i_n} \, \Omega \in \mathcal{H}_r = \{ 0 \}$. This means $b \, \Omega = 0$ and the moment also vanishes. If Conditions~(i) and~(ii) hold, but Condition~(iii) fails, then $\seps{1} + \cdots + \seps{l-1} = 0$ and $\varepsilon_l \neq -$ for some $l \in [n]$, which means $\varepsilon_l = \circ$. Since $\seps{l} + \cdots + \seps{n} = 0$, we have
\begin{equation*}
A^{\varepsilon_l}_{i_l} A^{\varepsilon_{l+1}}_{i_{l+1}} \ldots A^{\varepsilon_1}_{i_1} \, \Omega = A^{\varepsilon_l}_{i_l} \, (\alpha \Omega) = \alpha A^{\circ}_{i_l} \, \Omega = 0 \text{,}
\end{equation*}
where $\alpha \in \CC$, and $b \, \Omega = 0$. Then the mixed moment vanishes, which proves the proposition.
\end{proof}

\begin{remark}
The set $\riordan{n}$ corresponds to a certain subset of the set of all Motzkin paths of length $n$, namely the set of all so-called Riordan paths of length $n$ (see, for instance,~\cite[Section~2.3]{CGW2021}).

\end{remark}

\begin{proposition}
\label{proposition:bijRiordan}
There is a natural bijection between sets $\riordan{n}$ and $\noncrk{2+}(n)$.
\end{proposition}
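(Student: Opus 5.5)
The plan is to write down explicit maps in both directions and check that they are mutually inverse. Throughout, a word $(\varepsilon_1,\ldots,\varepsilon_n)$ is read from left to right. The partial sum $-(\seps{1}+\cdots+\seps{l})\ge 0$ plays the role of a ``depth''. The guiding dictionary is:
\begin{itemize}
\item $\varepsilon_l=-$ marks the leftmost leg of a block;
\item $\varepsilon_l=+$ marks the rightmost leg of a block;
\item $\varepsilon_l=\circ$ marks a middle leg.
\end{itemize}

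\textbf{From partitions to words.} Given $\pi\in\noncrk{2+}(n)$, define $\Phi(\pi)=(\varepsilon_1,\ldots,\varepsilon_n)$ by $\varepsilon_l=-$ if $l=\min B$, $\varepsilon_l=+$ if $l=\max B$, and $\varepsilon_l=\circ$ otherwise, where $B$ is the block containing $l$. This is well defined because every block has at least two elements, so $\min B\neq\max B$. Then $-(\seps{1}+\cdots+\seps{l})$ equals the number of blocks $B$ with $\min B\le l<\max B$, i.e.\ the number of blocks ``open'' after position $l$.
\begin{itemize}
\item Condition~(ii) holds because this count is nonnegative.
\item Condition~(i) holds because at $l=n$ every block is closed.
\item For condition~(iii): if $\varepsilon_l=\circ$, then the block of $l$ was opened before $l$ and is not yet closed, so the partial sum up to $l-1$ is strictly negative. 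If $\varepsilon_l=+$, the same holds because the block was opened strictly earlier.
\end{itemize}
Hence $\Phi(\pi)\in\riordan{n}$.

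\textbf{From words to partitions.} Given $\pmb{\varepsilon}\in\riordan{n}$, run a stack algorithm. Scan $l=1,\ldots,n$:
\begin{itemize}
\item if $\varepsilon_l=-$, push a new block $\{l\}$ onto the stack;
\item if $\varepsilon_l=\circ$, add $l$ to the block on top of the stack;
\item if $\varepsilon_l=+$, add $l$ to the top block and pop it.
\end{itemize}
Condition~(ii) says the stack size, which equals the depth, never goes negative, so a pop is never attempted on an empty stack. Condition~(iii) says $\circ$ and $+$ never occur at depth $0$, so the stack is nonempty whenever we add to it. Condition~(i) says the stack is empty at the end, so every block gets closed. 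Each block contains its opening $-$ and its closing $+$, so it has size $\ge2$. The blocks are noncrossing because the stack discipline forces every block opened after $\min B$ and before $\max B$ to be closed before $\max B$. This gives $\Psi(\pmb{\varepsilon})\in\noncrk{2+}(n)$.

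\textbf{Mutual inverses.} That $\Phi\circ\Psi=\id$ is immediate from the construction: the position pushed is the minimum, the position popped is the maximum, and the rest are middle legs. The step needing real care is $\Psi\circ\Phi=\id$. For this we must show that when $\Psi$ scans $\Phi(\pi)$, each middle leg or rightmost leg $l$ of a block $B$ finds $B$ on top of the stack. I will prove this by induction on $l$, using the noncrossing property as follows. Suppose a different block $B'$ sits above $B$ at time $l$. Then $\min B<\min B'<l<\max B'$, while $\{\min B,l\}\subset B$. This is a crossing, a contradiction. This is the only point where noncrossingness is really used, and I expect it to be the main obstacle, mostly in bookkeeping. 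With it, the pushes and pops of $\Psi$ applied to $\Phi(\pi)$ reproduce exactly the blocks of $\pi$, which establishes the bijection.
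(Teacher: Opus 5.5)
Your proof is correct and produces the same bijection as the paper (leftmost leg $\leftrightarrow -$, rightmost leg $\leftrightarrow +$, middle leg $\leftrightarrow \circ$), but it reaches it by a different route.

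The paper argues recursively on the word. It cuts $\varepsilon\in\mathcal{R}_n$ at the first index $m$ where the partial sum returns to $0$. The block containing $1$ is then $\{1,m\}$ together with the positions in $[2,m-1]$ that carry $\circ$ at depth one. The paper notes that the gaps between consecutive legs of this block, and the tail after $m$, are again Riordan words. It then glues the corresponding partitions $\pi'_1,\ldots,\pi'_p,\pi''$ around that block and leaves bijectivity to induction.

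You instead give $\pi\mapsto\varepsilon$ in closed form, invert it by a single left-to-right stack scan, and check both compositions explicitly.

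\begin{itemize}
\item The paper's version fits the first-block decomposition it reuses later: in the proof of Lemma~\ref{lemLS}, in the recurrence for labelings in Theorem~\ref{thm:MGFexact}, and in the definition of $P_\pi$, $Q_\pi$.
\item Your $\Phi$ is literally the rule of Definition~\ref{definition:partitionOperatorDiscrete}, which is what the proof of Theorem~\ref{thm:exactMomentFormula} actually uses.
\item Your version also shows where noncrossingness enters. $\Phi(\pi)\in\mathcal{R}_n$ holds for every partition with blocks of size at least two, and noncrossingness is exactly what makes $\Phi$ injective: both $\{\{1,3\},\{2,4\}\}$ and $\{\{1,4\},\{2,3\}\}$ are sent to $(-,-,+,+)$.
\end{itemize}

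One step should be stated more carefully. Your reason why $\Psi(\varepsilon)$ is noncrossing is that a block opened inside the span of $B$ closes before $\max B$. That only shows the spans $[\min B,\max B]$ are nested or disjoint, which is weaker than noncrossing: $\{\{1,3,5\},\{2,4\}\}$ has nested spans but is crossing. The missing half is that a new leg is always attached to the top block. Hence no leg of $B$ can occur while another block $B'$ sits above it on the stack. This follows at once from your algorithm, but it is the part that rules out crossings of this type, so it should be said.
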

\begin{proof}
For $n = 0$, we have $\emptyWord \mapsto \emptyset$. For $n=1$, we have the empty bijection between $\riordan{1} = \emptyset$ and $\noncrk{2+}(1) = \emptyset$. Let $(\varepsilon_1, \ldots, \varepsilon_n) \in \riordan{n}$ and let $m \in [n]$ be the smallest number such that $\seps{1} + \cdots + \seps{m} = 0$. Note that $\varepsilon_1 = -$ and $\varepsilon_m \neq \circ$ from~(iii). Then $\varepsilon_m = +$ from~(iii) and $\varepsilon \neq -$ from~(ii). For that reason, the sequence $(\varepsilon_2, \ldots, \varepsilon_{m-1})$ (possibly empty) satisfies the properties~(i) and~(ii), but not necessarily~(iii). Let $\{ l_1 < \cdots < l_{p-1} \} \subseteq [2, m-1]$ for some $p \in [m]$ ($\emptyset$ for $p=1$) be the set of all integers $l$ for which $\varepsilon_l = \circ$ although $\seps{2} + \cdots + \seps{l-1} = 0$ (by convention, an empty sum equals $0$). We put $l_0 = 1$ and $l_p = m$, and define the set $B = \{ l_0 < l_1 < \cdots < l_{p-1} < l_p \}$. For each $q \in [p]$, we have $(\varepsilon_{l_{q-1} + 1}, \ldots, \varepsilon_{l_q - 1}) \in \riordan{l_q - 1 - l_{q-1}}$ and $(\varepsilon_{m+1}, \ldots, \varepsilon_{n}) \in \riordan{n-m}$. These sequences correspond to unique partitions $\pi'_q \in \noncrk{2+}(l_q - 1 - l_{q-1})$, $q \in [p]$, and $\pi'' \in \noncrk{2+}(n-m)$, respectively. Let $\pi = \ncPartitionRecurrenceD{\pi'_1}{\cdots}{\pi'_p}{\pi''}$ be such that $B \in \pi$. We then assign the partition $\pi \in \noncrk{2+}(n)$ to the sequence $(\varepsilon_1, \ldots, \varepsilon_n)$. It is clear from induction that this correspondence is a bijection between $\riordan{n}$ and $\noncrk{2+}(n)$.
\end{proof}

\begin{remark}
There is a canonical bijection between the sets $\noncrk{2+}(n)$ and $\mathcal{NC}_{2}^{1, i}(n)$, where the latter denotes the set of all noncrossing partitions $\pi \in \noncr(n)$ with blocks of cardinality at most $2$ such that all singleton blocks must not be outer. This kind of noncrossing partition was used in an analogous context in~\cite{LO2023, LOJW2024}. Indeed, for $\pi \in \noncrk{2+}(n)$, it suffices to convert all middle legs into singletons. For the inverse, we take $\pi \in \noncr_{2}^{1, i}(n)$ and connect all singleton blocks with their nearest outer block.

\end{remark}

Let us first introduce some additional tools and preliminary results.
\begin{definition}
By a \emph{labeling} of a partition $\pi$, we mean a function $\mathrm{L} \colon \pi \mapsto I$. The pair $(\pi, \mathrm{L})$ is called a \emph{labeled partition}. For such a partition, consider a set
\begin{equation*}
\partTree{(\pi, \mathrm{L})} \coloneqq \left\{ \left( \mathrm{L}(B_r), \ldots, \mathrm{L}(B_1) \right) : \{ B_1 \prec_{\pi} \cdots \prec_{\pi} B_r \} \text{ is a maximal chain in } (\pi, \prec_{\pi}) \right\} \text{.}
\end{equation*}
We say that $(\pi, \mathrm{L})$ is \emph{V-monotonically labeled} if $\partTree{(\pi, \mathrm{L})} \subseteq \vWords{}{I}$.
\end{definition}

\begin{example}
The labeled partition $(\pi, \mathrm{L}_1)$ in Fig.~\ref{imageadaptedseq} (left) is V-monotonically labeled, while the labeled partition $(\pi, \mathrm{L}_2)$ (right) is not. Indeed
\begin{equation*}
\partTree{(\pi, \mathrm{L}_1)} = \left\{ (6, 5, 7, 8), (6, 5, 6) \right\} \subseteq \vWords{}{ \NNp } \text{,}
\end{equation*}
whereas $(4, 5, 7, 2)$ is not ``V-shaped''.
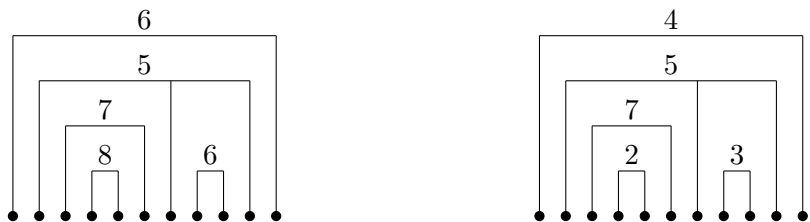
\begin{figure}[ht]
\centering



\begin{tikzpicture}
    \tikzstyle{leg} = [circle, draw=black, fill=black!100, text=black!100, thin, inner sep=0pt, minimum size=2.0]

    \pgfmathsetmacro {\dx}{0.35}
    \pgfmathsetmacro {\dy}{0.60}
    
    \pgfmathsetmacro {\x}{0*\dx}
    \pgfmathsetmacro {\y}{-6*\dy}

    \draw[-] (\x+0*\dx,\y+0*\dy) -- (\x+0*\dx,\y+4*\dy);
    \node () at (\x+0*\dx,\y+0*\dy) [leg] {.};
    \draw[-] (\x+10*\dx,\y+0*\dy) -- (\x+10*\dx,\y+4*\dy);
    \node () at (\x+10*\dx,\y+0*\dy) [leg] {.};
    \draw[-] (\x+0*\dx,\y+4*\dy) -- (\x+10*\dx,\y+4*\dy);
    \node () at (\x+5*\dx,\y+4*\dy) [label={[label distance = -2mm]above:\small{$6$}}] {};

    \draw[-] (\x+1*\dx,\y+0*\dy) -- (\x+1*\dx,\y+3*\dy);
    \node () at (\x+1*\dx,\y+0*\dy) [leg] {.};
     \draw[-] (\x+6*\dx,\y+0*\dy) -- (\x+6*\dx,\y+3*\dy);
    \node () at (\x+6*\dx,\y+0*\dy) [leg] {.};
    \draw[-] (\x+9*\dx,\y+0*\dy) -- (\x+9*\dx,\y+3*\dy);
    \node () at (\x+9*\dx,\y+0*\dy) [leg] {.};
     \draw[-] (\x+1*\dx,\y+3*\dy) -- (\x+9*\dx,\y+3*\dy);
    \node () at (\x+5*\dx,\y+3*\dy) [label={[label distance = -2mm]above:\small{$5$}}] {};
    
    \draw[-] (\x+2*\dx,\y+0*\dy) -- (\x+2*\dx,\y+2*\dy);
    \node () at (\x+2*\dx,\y+0*\dy) [leg] {.};
    \draw[-] (\x+5*\dx,\y+0*\dy) -- (\x+5*\dx,\y+2*\dy);
    \node () at (\x+5*\dx,\y+0*\dy) [leg] {.};
    \draw[-] (\x+2*\dx,\y+2*\dy) -- (\x+5*\dx,\y+2*\dy);
    \node () at (\x+3.5*\dx,\y+2*\dy) [label={[label distance = -2mm]above:\small{$7$}}] {};
    
    \draw[-] (\x+3*\dx,\y+0*\dy) -- (\x+3*\dx,\y+1*\dy);
    \node () at (\x+3*\dx,\y+0*\dy) [leg] {.};
    \draw[-] (\x+4*\dx,\y+0*\dy) -- (\x+4*\dx,\y+1*\dy);
    \node () at (\x+4*\dx,\y+0*\dy) [leg] {.};
    \draw[-] (\x+3*\dx,\y+1*\dy) -- (\x+4*\dx,\y+1*\dy);
    \node () at (\x+3.5*\dx,\y+1*\dy) [label={[label distance = -2mm]above:\small{$8$}}] {};
    
    \draw[-] (\x+7*\dx,\y+0*\dy) -- (\x+7*\dx,\y+1*\dy);
    \node () at (\x+7*\dx,\y+0*\dy) [leg] {.};
    \draw[-] (\x+8*\dx,\y+0*\dy) -- (\x+8*\dx,\y+1*\dy);
    \node () at (\x+8*\dx,\y+0*\dy) [leg] {.};
    \draw[-] (\x+7*\dx,\y+1*\dy) -- (\x+8*\dx,\y+1*\dy);
    \node () at (\x+7.5*\dx,\y+1*\dy) [label={[label distance = -2mm]above:\small{$6$}}] {};
    
    \pgfmathsetmacro {\x}{20*\dx}
    \pgfmathsetmacro {\y}{-6*\dy}

    \draw[-] (\x+0*\dx,\y+0*\dy) -- (\x+0*\dx,\y+4*\dy);
    \node () at (\x+0*\dx,\y+0*\dy) [leg] {.};
    \draw[-] (\x+10*\dx,\y+0*\dy) -- (\x+10*\dx,\y+4*\dy);
    \node () at (\x+10*\dx,\y+0*\dy) [leg] {.};
    \draw[-] (\x+0*\dx,\y+4*\dy) -- (\x+10*\dx,\y+4*\dy);
    \node () at (\x+5*\dx,\y+4*\dy) [label={[label distance = -2mm]above:\small{$4$}}] {};

    \draw[-] (\x+1*\dx,\y+0*\dy) -- (\x+1*\dx,\y+3*\dy);
    \node () at (\x+1*\dx,\y+0*\dy) [leg] {.};
     \draw[-] (\x+6*\dx,\y+0*\dy) -- (\x+6*\dx,\y+3*\dy);
    \node () at (\x+6*\dx,\y+0*\dy) [leg] {.};
    \draw[-] (\x+9*\dx,\y+0*\dy) -- (\x+9*\dx,\y+3*\dy);
    \node () at (\x+9*\dx,\y+0*\dy) [leg] {.};
     \draw[-] (\x+1*\dx,\y+3*\dy) -- (\x+9*\dx,\y+3*\dy);
    \node () at (\x+5*\dx,\y+3*\dy) [label={[label distance = -2mm]above:\small{$5$}}] {};
    
    \draw[-] (\x+2*\dx,\y+0*\dy) -- (\x+2*\dx,\y+2*\dy);
    \node () at (\x+2*\dx,\y+0*\dy) [leg] {.};
    \draw[-] (\x+5*\dx,\y+0*\dy) -- (\x+5*\dx,\y+2*\dy);
    \node () at (\x+5*\dx,\y+0*\dy) [leg] {.};
    \draw[-] (\x+2*\dx,\y+2*\dy) -- (\x+5*\dx,\y+2*\dy);
    \node () at (\x+3.5*\dx,\y+2*\dy) [label={[label distance = -2mm]above:\small{$7$}}] {};
    
    \draw[-] (\x+3*\dx,\y+0*\dy) -- (\x+3*\dx,\y+1*\dy);
    \node () at (\x+3*\dx,\y+0*\dy) [leg] {.};
    \draw[-] (\x+4*\dx,\y+0*\dy) -- (\x+4*\dx,\y+1*\dy);
    \node () at (\x+4*\dx,\y+0*\dy) [leg] {.};
    \draw[-] (\x+3*\dx,\y+1*\dy) -- (\x+4*\dx,\y+1*\dy);
    \node () at (\x+3.5*\dx,\y+1*\dy) [label={[label distance = -2mm]above:\small{$2$}}] {};
    
    \draw[-] (\x+7*\dx,\y+0*\dy) -- (\x+7*\dx,\y+1*\dy);
    \node () at (\x+7*\dx,\y+0*\dy) [leg] {.};
    \draw[-] (\x+8*\dx,\y+0*\dy) -- (\x+8*\dx,\y+1*\dy);
    \node () at (\x+8*\dx,\y+0*\dy) [leg] {.};
    \draw[-] (\x+7*\dx,\y+1*\dy) -- (\x+8*\dx,\y+1*\dy);
    \node () at (\x+7.5*\dx,\y+1*\dy) [label={[label distance = -2mm]above:\small{$3$}}] {};
    
\end{tikzpicture}
\caption{Labeled noncrossing partitions.}
\label{imageadaptedseq}
\end{figure}
\end{example}

\begin{definition}
\label{definition:partitionOperatorDiscrete}
For $\pi \in \noncrk{2+}(n)$ and $\mathbf{i} = (i_1, \ldots, i_n) \in I^n$, let
\begin{equation*}
\partOpDiscA{\pi}{\mathbf{i}} = b_1 \cdot \ldots \cdot b_n \text{,}
\end{equation*}
where, for $k \in [n]$, we put
\begin{equation*}
b_k =
\begin{cases}
A_{i_k}^{+} & \text{if $k = \max B$ for some $B \in \pi$,} \\
A_{i_k}^{-} & \text{if $k = \min B$ for some $B \in \pi$,} \\
A_{i_k}^{\circ} & \text{otherwise.}
\end{cases}
\end{equation*}
We adopt the convention that the empty product equals $\id$ (the identity operator) for $\pi = \emptyset$ and $\mathbf{i} = \emptyWord$.

That is, on the $k$th place in $b_1 \cdot \ldots \cdot b_n$, we have

\begin{itemize}
\item a creation operator if $k$ is the rightmost leg of some $B \in \pi$, 
\item an annihilation operator if $k$ is the leftmost leg of some $B \in \pi$, 
\item a conservation operator $k$ is a middle leg of some $B \in \pi$.
\end{itemize}

\def\j{\mathbf{j}}
For a labeled partition $(\pi, \mathrm{L})$, we put $\partOpDiscB{\pi}{\mathrm{L}} = \partOpDiscA{\pi}{\j}$, where the sequence $\j = \left( j_1, \ldots, j_n \right)$ is such that $j_l = \mathrm{L}(B)$ with $B$ being a unique block that contains $l \in [n]$.

\let\j\undefined
\end{definition}

\begin{example}
To illustrate the above definition, consider a partition $\pi \in \noncrk{2+}(12)$ and an operator $\partOpDiscA{\pi}{\mathbf{i}}$, presented in Fig.~\ref{figure:partition_operator}, for $\mathbf{i} \in \{ \mathbf{i}_1, \mathbf{i}_2, \mathbf{i}_3 \} \subseteq I^{12}$, where
\begin{itemize}
    \item $\mathbf{i}_1 = (3,2,1,3,4,3,3,3,3,2,4,4)$,
    \item $\mathbf{i}_2 = (2,1,1,2,3,4,3,3,3,4,3,3)$,
    \item $\mathbf{i}_3 = (2,1,1,2,4,3,5,5,5,3,4,4)$.
\end{itemize}
There is no labeling $\mathrm{L}_1$ such that $\partOpDiscA{\pi}{\mathbf{i}_1} = \partOpDiscB{\pi}{\mathrm{L}_1}$. However, such labelings exist:
\begin{itemize}
    \item $\mathrm{L}_2$, for $\mathbf{i}_2$, given by $B_1 \mapsto 2, B_2 \mapsto 1, B_3 \mapsto 3, B_4 \mapsto 4, B_5 \mapsto 3$,
    \item $\mathrm{L}_3$, for $\mathbf{i}_3$, given by $B_1 \mapsto 2, B_2 \mapsto 1, B_3 \mapsto 4, B_4 \mapsto 3, B_5 \mapsto 5$.
\end{itemize}
The partition $(\pi, \mathrm{L}_3)$ is V-monotonically labeled whereas $(\pi, \mathrm{L}_2)$ is not.

On the other hand, there are no noncrossing partitions corresponding to the operators $b_1 = A^{-}_1 A^{+}_1 A^{+}_2$, $b_2 = A^{-}_2 A^{\circ}_2 A^{+}_2   A^{+}_1 A^{-}_1   A^{-}_4 A^{-}_3 A^{\circ}_3 A^{+}_3 A^{+}_4$, and $b_3 = A^{-}_2 A^{-}_1 A^{+}_1 A^{+}_2 A^{\circ}_4 A^{-}_2 A^{+}_2$.
\begin{figure}[ht]
\centering



\begin{tikzpicture}
    \tikzstyle{leg} = [circle, draw=black, fill=black!100, text=black!100, thin, inner sep=0pt, minimum size=2.0]

    \pgfmathsetmacro {\dx}{0.75}
    \pgfmathsetmacro {\dy}{0.9}

    \pgfmathsetmacro {\x}{13*\dx}
    \pgfmathsetmacro {\y}{-6*\dy}

    \draw[-] (\x+0*\dx,\y+0*\dy) -- (\x+0*\dx,\y+2*\dy);
    \node () at (\x+0*\dx,\y+0*\dy) [leg, label={below:$A^{-}_{i_1}$}] {.};
    \draw[-] (\x+3*\dx,\y+0*\dy) -- (\x+3*\dx,\y+2*\dy);
    \node () at (\x+3*\dx,\y+0*\dy) [leg, label={below:$A^{+}_{i_4}$}] {.};
    \draw[-] (\x+0*\dx,\y+2*\dy) -- (\x+3*\dx,\y+2*\dy);
    \node () at (\x+1.5*\dx,\y+2*\dy)  [label={[label distance = -1.5mm]above:$B_1$}] {};

    \draw[-] (\x+1*\dx,\y+0*\dy) -- (\x+1*\dx,\y+1*\dy);
    \node () at (\x+1*\dx,\y+0*\dy) [leg, label={below:$A^{-}_{i_2}$}] {.};
    \draw[-] (\x+2*\dx,\y+0*\dy) -- (\x+2*\dx,\y+1*\dy);
    \node () at (\x+2*\dx,\y+0*\dy) [leg, label={below:$A^{+}_{i_3}$}] {.};
    \draw[-] (\x+1*\dx,\y+1*\dy) -- (\x+2*\dx,\y+1*\dy);
    \node () at (\x+1.5*\dx,\y+1*\dy)  [label={[label distance = -1.5mm]above:$B_2$}] {};

    \draw[-] (\x+4*\dx,\y+0*\dy) -- (\x+4*\dx,\y+3*\dy);
    \node () at (\x+4*\dx,\y+0*\dy) [leg, label={below:$A^{-}_{i_5}$}] {.};
    \draw[-] (\x+10*\dx,\y+0*\dy) -- (\x+10*\dx,\y+3*\dy);
    \node () at (\x+10*\dx,\y+0*\dy) [leg, label={below:$A^{\circ}_{i_{11}}$}] {.};
    \draw[-] (\x+11*\dx,\y+0*\dy) -- (\x+11*\dx,\y+3*\dy);
    \node () at (\x+11*\dx,\y+0*\dy) [leg, label={below:$A^{+}_{i_{12}}$}] {.};
    \draw[-] (\x+4*\dx,\y+3*\dy) -- (\x+11*\dx,\y+3*\dy);
    \node () at (\x+7.5*\dx,\y+3*\dy)  [label={[label distance = -1.5mm]above:$B_3$}] {};

    \draw[-] (\x+5*\dx,\y+0*\dy) -- (\x+5*\dx,\y+2*\dy);
    \node () at (\x+5*\dx,\y+0*\dy) [leg, label={below:$A^{-}_{i_6}$}] {.};
    \draw[-] (\x+9*\dx,\y+0*\dy) -- (\x+9*\dx,\y+2*\dy);
    \node () at (\x+9*\dx,\y+0*\dy) [leg, label={below:$A^{+}_{i_{10}}$}] {.};
    \draw[-] (\x+5*\dx,\y+2*\dy) -- (\x+9*\dx,\y+2*\dy);
    \node () at (\x+7*\dx,\y+2*\dy)  [label={[label distance = -1.5mm]above:$B_4$}] {};

    \draw[-] (\x+6*\dx,\y+0*\dy) -- (\x+6*\dx,\y+1*\dy);
    \node () at (\x+6*\dx,\y+0*\dy) [leg, label={below:$A^{-}_{i_7}$}] {.};
    \draw[-] (\x+7*\dx,\y+0*\dy) -- (\x+7*\dx,\y+1*\dy);
    \node () at (\x+7*\dx,\y+0*\dy) [leg, label={below:$A^{\circ}_{i_{8}}$}] {.};
    \draw[-] (\x+8*\dx,\y+0*\dy) -- (\x+8*\dx,\y+1*\dy);
    \node () at (\x+8*\dx,\y+0*\dy) [leg, label={below:$A^{+}_{i_{9}}$}] {.};
    \draw[-] (\x+6*\dx,\y+1*\dy) -- (\x+8*\dx,\y+1*\dy);
    \node () at (\x+7*\dx,\y+1*\dy)  [label={[label distance = -1.5mm]above:$B_5$}] {};

\end{tikzpicture}
\caption{Operator $\partOpDiscA{\pi}{\mathbf{i}} = A^{-}_{i_1} A^{-}_{i_2} A^{+}_{i_3} A^{+}_{i_4} A^{-}_{i_5} A^{-}_{i_6} A^{-}_{i_7} A^{\circ}_{i_8} A^{+}_{i_9} A^{+}_{i_{10}} A^{\circ}_{i_{11}} A^{+}_{i_{12}}$ associated to $\pi \in \noncrk{2+}(12)$.}
\label{figure:partition_operator}
\end{figure}
\end{example}
\begin{lemma}
\label{lemLS}
Let $b = A^{\varepsilon_1}_{i_1} \cdot \ldots \cdot A^{\varepsilon_n}_{i_n}$ for $\mathbf{i} = \left( i_1, \ldots, i_n \right) \in I^n$ and $\varepsilon = \left( \varepsilon_1, \ldots, \varepsilon_n \right) \in \{ -, 0, + \}^n$, $n \in \NNp$. If there exists a noncrossing V-monotonically labeled partition $(\pi, \mathrm{L})$ such that $b = \partOpDiscB{\pi}{\mathrm{L}}$, then $\varphi(b) = 1$. Otherwise, the mixed moment equals $0$.
\end{lemma}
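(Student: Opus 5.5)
By Proposition~\ref{proposition:Riordan}, we may assume $\pmb{\varepsilon}\in\riordan{n}$, since otherwise $\varphi(b)=0$. Proposition~\ref{proposition:bijRiordan} then attaches to $\pmb{\varepsilon}$ a unique $\pi\in\noncrk{2+}(n)$ with $b=\partOpDiscA{\pi}{\mathbf{i}}$. The statement therefore reduces to the following claim: $\varphi(\partOpDiscA{\pi}{\mathbf{i}})=1$ if $\mathbf{i}$ is constant on each block of $\pi$ (so that $\mathbf{i}$ comes from a labeling $\mathrm{L}$) and $(\pi,\mathrm{L})$ is V-monotonically labeled; otherwise $\varphi(\partOpDiscA{\pi}{\mathbf{i}})=0$. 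Since the operators only shift, keep or delete tensor factors, $b\,\Omega$ is always either $0$ or a single basis vector $e_{\mathbf{j}}$. By degree counting the latter must be $\Omega$, so $\varphi(b)\in\{0,1\}$ and it suffices to decide when $b\,\Omega\neq 0$.

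I would prove a stronger statement by induction on $n$, following the recursive decomposition from the proof of Proposition~\ref{proposition:bijRiordan}. For $\pi\in\noncrk{2+}(n)$ and a nonzero basis vector $e_{\mathbf{w}}$, $\mathbf{w}\in\vWords{}{I}$, I would show
\begin{equation*}
\partOpDiscA{\pi}{\mathbf{i}}\, e_{\mathbf{w}} =
\begin{cases}
e_{\mathbf{w}} & \text{if $\mathbf{i}$ is block-constant and $(\mathrm{L}(B_r),\ldots,\mathrm{L}(B_1),\mathbf{w})\in\vWords{}{I}$ for every maximal chain $B_1\prec_\pi\cdots\prec_\pi B_r$,}\\
0 & \text{otherwise.}
\end{cases}
\end{equation*}
Taking $\mathbf{w}=\emptyWord$ gives the lemma.

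For the inductive step, write $\pi$ as the first outer block $B=\{l_0<\cdots<l_p\}$ with inner partitions $\pi'_1,\ldots,\pi'_p$, followed by the remainder $\pi''$. Operators act right to left, so $\partOpDiscA{\pi''}{\cdot}$ acts first, and by induction it returns $e_{\mathbf{w}}$ or $0$. Next the creation operator at $l_p=\max B$ with index $i_{l_p}$ produces $e_{(i_{l_p},\mathbf{w})}$, which is nonzero exactly when $i_{l_p}\sim\mathbf{w}$. The inner partition $\pi'_p$ then acts on this vector, and by induction it fixes it or kills it, with the chain condition now taken relative to $(i_{l_p},\mathbf{w})$. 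Each middle leg $A^{\circ}_{i_{l_q}}$ fixes the vector if $i_{l_q}=i_{l_p}$ and kills it otherwise, by the definition of $A^\circ$. Proceeding leftward through $\pi'_{p-1},\ldots,\pi'_1$ in the same way, the annihilator at $l_0$ returns $e_{\mathbf{w}}$ precisely when $i_{l_0}=i_{l_p}$. The result is nonzero exactly when $\mathbf{i}$ is constant on $B$, the label $\mathrm{L}(B)$ is compatible with $\mathbf{w}$, and every maximal chain inside each $\pi'_q$, prefixed to $(\mathrm{L}(B),\mathbf{w})$, is a V-word. Together with the $\pi''$ condition, this is exactly the chain condition for $\pi$, because maximal chains of $\pi$ are either chains of $\pi''$ or $B$ followed by a maximal chain of some $\pi'_q$ (or $B$ alone if that $\pi'_q$ is empty).

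The main obstacle is the bookkeeping in matching the V-word condition on the built-up tensor with the chain condition. In particular, I need that membership in $\vWords{}{I}$ of $(\ldots,\mathrm{L}(B),\mathbf{w})$ is tested correctly at each creation step, and that prefix-closedness is handled. If $(j_1,\ldots,j_k)\in\vWords{}{I}$, then every suffix is again a V-word, so checking only the longest words along maximal chains suffices. I would also confirm that block-constancy of indices inside each block is forced: a mismatch at a middle leg or at the left leg annihilates the vector, using the relations of Lemma~\ref{cr}.
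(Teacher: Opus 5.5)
Your proposal is correct and follows the paper's proof essentially step for step. You reduce via Propositions~\ref{proposition:Riordan} and~\ref{proposition:bijRiordan} to $b = b^{\pi}_{\mathbf{i}}$, then prove by induction on $n$, along the decomposition into the first outer block, the inner partitions $\pi'_q$ and the remainder $\pi''$, that $b^{\pi}_{\mathbf{i}}$ multiplies each basis vector $e_{\mathbf{w}}$ by $0$ or $1$, with the factor governed by block-constancy of $\mathbf{i}$ and by V-shapedness of the chain words with $\mathbf{w}$ appended. The only difference is cosmetic: you build the closed-form chain condition directly into the induction hypothesis, using suffix-closedness of V-words, whereas the paper first writes the recursion for the coefficient and then asserts the closed form.
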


\def\const#1#2#3{ C^{#1}_{#2, #3} }
\def\constL#1#2#3{ C_{#1}^{(#2, #3)} }
\def\operator#1#2{ A^{#1}_{ i_{l_{#2}} } }

\def\vCharB{ \vChar(\mathbf{j}') }
\def\constB#1{ \const{\pi'_{#1}}{\mathbf{i}'_{#1}}{\mathbf{j}'} }

\def\constC{ \const{\pi''}{\mathbf{i}''}{\mathbf{j}} }

\def\constDelta#1{ \delta_{i_{l_{#1}}, i_{l_p}} }

\begin{proof}
We focus only on the case where there exists $\pi \in \noncrk{2+}(n)$ such that $b = \partOpDiscA{\pi}{\mathbf{i}}$ since otherwise, according to Propositions~\ref{proposition:Riordan} and~\ref{proposition:bijRiordan}, we have $\varphi(b) = 0$ (see also the proof of Lemma~5.1 from~\cite{RLRS2008}). To establish the lemma, we show that for every $\mathbf{j} \in \fWords{I}$, we have
\begin{equation*}
\partOpDiscA{\pi}{\mathbf{i}} \, e_{\mathbf{j}} =  \const{\pi}{\mathbf{i}}{\mathbf{j}} \cdot e_{\mathbf{j}} \text{,}
\end{equation*}
for some $\const{\pi}{\mathbf{i}}{\mathbf{j}} \in \{ 0, 1 \}$, which will be specified later. We use induction on $n$. The base case is obvious. In the step case, for some $p \in \NNp$, we have
\begin{equation*}
\pi = \ncPartitionRecurrenceD{\pi'_1}{\cdots}{\pi'_p}{\pi''}
\quad \text{ and } \quad
\partOpDiscA{\pi}{\mathbf{i}} = \operator{-}{0} \cdot \partOpDiscA{\pi'_1}{\mathbf{i}'_1} \cdot \operator{0}{1} \cdot \ldots \cdot \operator{0}{p-1} \cdot \partOpDiscA{\pi'_p}{\mathbf{i}'_p} \cdot \operator{+}{p} \cdot \partOpDiscA{\pi''}{\mathbf{i}''} \text{,}
\end{equation*}
where
\begin{itemize}
    \item $\{ 1= l_0 < l_1 < \cdots < l_p \} \in \pi$,
    \item $\pi'_q \in \noncrk{2+}([l_{q-1}+1, l_q-1])$, for all $q \in [p]$,
    \item $\mathbf{i}'_q = \left( i_{l_{q-1}+1}, \ldots, i_{l_q-1} \right)$, for all $q \in [p]$,
    \item $\pi'' \in \noncrk{2+}([l_p+1, n])$,
    \item $\mathbf{i}'' = \left(i_{l_p+1}, \ldots, i_n \right)$.
\end{itemize}
In this case
\begin{equation*}
\begin{split}
\partOpDiscA{\pi}{\mathbf{i}} \, e_{\mathbf{j}}
& = \left( \operator{-}{0} \cdot \partOpDiscA{\pi'_1}{\mathbf{i}'_1} \cdot \operator{0}{1} \cdot \ldots \cdot \operator{0}{p-1} \cdot \partOpDiscA{\pi'_p}{\mathbf{i}'_p} \right) \, \vCharB \cdot \constC \cdot e_{\mathbf{j}'} \\
& = \operator{-}{0} \, \left( \constB{1} \cdot \constDelta{1} \cdot \ldots \cdot \constDelta{p-1} \cdot \constB{p} \cdot \vCharB \cdot \constC \cdot e_{\mathbf{j}'} \right) \\
& = \constDelta{0} \cdot \constB{1} \cdot \constDelta{1} \cdot \ldots \cdot \constDelta{p-1} \cdot \constB{p} \cdot \vCharB \cdot \constC \cdot e_{\mathbf{j}} \text{,}
\end{split}
\end{equation*}
where $\mathbf{j}' = \left( i_{l_p}, \mathbf{j} \right)$ is the concatenation of $(i_{l_p})$ and $\mathbf{j}$.

It follows that
\begin{equation*}
\const{\pi}{\mathbf{i}}{\mathbf{j}} = \constDelta{0} \cdot \constB{1} \cdot \constDelta{1} \cdot \ldots \cdot \constDelta{p-1} \cdot \constB{p} \cdot \vCharB \cdot \constC \text{.}
\end{equation*}
Using induction, one can check that
\begin{equation*}
\const{\pi}{\mathbf{i}}{\mathbf{j}} = \prod_{\tilde{\mathbf{i}} \in \partTree{}} \vChar(\tilde{\mathbf{i}}, \mathbf{j}) \cdot \prod_{B \in \pi} \left( \prod_{l, l' \in B} \delta_{i_{l}, i_{l'}} \right) \text{,}
\end{equation*}
where
\begin{equation*}
\partTree{} \coloneqq \left\{ \left( i_{\max B_r}, \ldots, i_{\max B_1} \right) : \{ B_1 \prec_{\pi} \cdots \prec_{\pi} B_r \} \text{ is a maximal chain in } (\pi, \prec_{\pi}) \right\} \text{.}
\end{equation*}

That means $b\, \Omega = \partOpDiscA{\pi}{\mathbf{i}} \, \Omega = \Omega$ if and only if there exists a V-monotonically labeled partition $(\pi, \mathrm{L})$ such that $b = \partOpDiscB{\pi}{\mathrm{L}}$ and $b \, \Omega$ is the zero vector otherwise, which leads to the desired conclusion.
\end{proof}
\let\constDelta\undefined
\let\constC\undefined
\let\constB\undefined
\let\vCharB\undefined
\let\opearator\undefined
\let\constL\undefined
\let\const\undefined

\begin{corollary}
\label{corollary:labeledPartOpInterval}
We have
\begin{equation*}
\varphi \left( \partOpDiscB{\pi}{\mathrm{L}} \right) = 1 \text{}
\end{equation*}
for any labeled partition $(\pi, \mathrm{L})$ such that $\pi \in \part(n)$ is an \emph{interval partition}, i.e., each block of $\pi$ is an interval block. A block $B \in \pi$ is called an \emph{interval block} if it consists of consecutive integers, that is, $B = [l_1, l_2] = \{ l_1, l_1+1, \ldots, l_2 \}$ for some $1 \leq l_1 \leq l_2 \leq n$.

\end{corollary}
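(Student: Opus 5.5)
The plan is to reduce the statement to Lemma~\ref{lemLS}. It then suffices to check that an interval partition, with \emph{any} labeling, is noncrossing and V-monotonically labeled. Since the operator $\partOpDiscB{\pi}{\mathrm{L}}$ is defined in Definition~\ref{definition:partitionOperatorDiscrete} only for $\pi \in \noncrk{2+}(n)$, I read the statement as concerning interval partitions whose blocks all have at least two elements. Such a partition lies in $\noncrk{2+}(n)$, since interval blocks cannot cross: if $a<c<b<d$ with $a,b\in B_i$, then $c\in[a,b]\subseteq B_i$.

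The key step is to compute the poset $(\pi,\prec_\pi)$. Take $B'=\{l_0<\cdots<l_p\}=[l_0,l_p]$ an interval block. Then $l_q=l_{q-1}+1$ for every $q\in[p]$, so no block $B$ can satisfy $l_{q-1}<\min B\le\max B<l_q$. Hence $B'\prec_\pi B$ never holds, and every block is outer. Consequently the maximal chains of $(\pi,\prec_\pi)$ are exactly the one-element chains $\{B\}$. Thus
\begin{equation*}
\partTree{(\pi,\mathrm{L})}=\{(\mathrm{L}(B)) : B\in\pi\}\subseteq \vWords{1}{I}=I\subseteq \vWords{}{I},
\end{equation*}
so $(\pi,\mathrm{L})$ is V-monotonically labeled. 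Lemma~\ref{lemLS}, applied to $b=\partOpDiscB{\pi}{\mathrm{L}}$, then gives $\varphi(b)=1$.

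As a sanity check, and as an alternative self-contained route, I would also verify the claim directly. Writing $\pi=\{[l_1,l_2],[l_2+1,l_3],\ldots\}$, the operator factors as a product over the blocks, taken from left to right, of words $A^-_{i}(A^\circ_{i})^{m}A^+_{i}$, where $i=\mathrm{L}(B)$ and $m=\ml{B}$. Applied to $\Omega$, each such factor gives $A^-_iA^+_i\Omega=\Omega$, using $(A^\circ_i)^mA^+_i=A^+_i$ from Lemma~\ref{cr}. Applying the factors from right to left returns $\Omega$ each time, so $b\,\Omega=\Omega$ and the moment equals $1$.

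The only potential obstacle is the interpretation of singleton blocks, which the phrase ``$\pi\in\part(n)$ an interval partition'' formally allows. I would handle this by noting that $\partOpDiscB{\pi}{\mathrm{L}}$ is defined only for partitions in $\noncrk{2+}(n)$, so the statement implicitly restricts to blocks with $\card{B}\ge 2$. Beyond this, the argument is routine.
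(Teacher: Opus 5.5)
Your proposal is correct and follows the paper's intended argument. The paper states the corollary as an immediate consequence of Lemma~\ref{lemLS}: interval blocks admit no nesting, so every maximal chain in $(\pi,\prec_\pi)$ is a single block and every labeling is V-monotone. Your direct computation $A^-_i(A^\circ_i)^m A^+_i\,\Omega=\Omega$ is a valid independent check, and your restriction to blocks with at least two elements is the intended reading, as the paper's later use of interval partitions in $\noncrk{2+}(n,k)$ confirms.
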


\section{V-monotone Poisson type limit theorem}
\label{section:Poisson}
In this section, we study the V-monotone Poisson-type limit theorem for the sequence of operators~\eqref{eq:GeneralSumPLT}, that is,
\begin{equation*}
S_{N}(\lambda) \coloneqq \sum_{i=1}^{N} \left( \frac{G_i}{\sqrt{N}} + \lambda A_{i}^{\circ} \right) \text{,} \qquad N \in \NNp \text{,} \quad \lambda \in \RR \text{,}
\end{equation*}
where $G_{i}:=A_{i}^{+}+A_{i}^{-}$ is a Gaussian operator. We investigate its exact and limit moments
\begin{equation*}
\momentExact{n}{\lambda}{N} \coloneqq \varphi \left( S_N^n(\lambda) \right) \qquad \text{and} \qquad \momentLimit{n}{\lambda} \coloneqq \lim_{N \to \infty} \momentExact{n}{\lambda}{N} \text{,}
\end{equation*}
respectively, for $n \in \NN$.

\begin{remark}
The limit measure for $\lambda = 0$ is the standard V-monotone Gaussian distribution (see~\cite[Corollary~6.8]{AD2020}). The exact form of this measure was obtained in~\cite{AD2025}.
\end{remark}

The moments of $S_{N}(\lambda)$ can be expressed combinatorially. For $n \in \NN$ and $N \in \NNp$, let $\vlNoncrk{2+}{N}(n)$ be the set of all V-monotonically labeled partition $(\pi, \mathrm{L})$ such that $\pi \in \noncrk{2+}(n)$ and $\mathrm{L} \colon \pi \to [N]$. We denote by $\vlNoncrk{2+}{N}(n, k)$ the set of all $(\pi, \mathrm{L}) \in \vlNoncrk{2+}{N}(n)$ such that $\pi$ has exactly $k \in \NN$ blocks. Moreover, we put $\vlNoncrk{2+}{N}(0) = \vlNoncrk{2+}{N}(0, 0) = \{ \emptyset \}$. Note that $\vlNoncrk{2+}{N}(n, k) = \emptyset$ if $n > k = 0$, $n < 2k$, or $N = 0$ (with $(n, k) \neq (0, 0)$).

\begin{example}
\label{example:VL_two-plus}
The set $\noncrk{2+}(6)$ consists of $15$ elements, presented in Fig.~\ref{picture:NC_two-plus}. We have
\begin{itemize}
    \item $\noncrk{2+}(6, 1) = \{ \pi_{15} \}$,
    \item $\noncrk{2+}(6, 2) = \{ \pi_3, \pi_7, \ldots, \pi_{14} \}$,
    \item $\noncrk{2+}(6, 3) = \noncrk{2}(6) = \{ \pi_1, \pi_2, \pi_4, \pi_5, \pi_6 \}$.
\end{itemize}
Let us compute the cardinality of $\vlNoncrk{2+}{N}(6, k)$, $N \in \NNp$, for $k = 1, 2, 3$. Of course, for the remaining values of $k$, the cardinality is $0$. We have
\begin{itemize}
    \item $\card{ \vlNoncrk{2+}{N}(6, 1) } = N$,
    \item $\card{ \vlNoncrk{2+}{N}(6, 2) } = 9 N^2 - 6 N$,
    \item $\card{ \vlNoncrk{2+}{N}(6, 3) } = \lfrac{14 N^3}{3} - \lfrac{11 N^2}{2} + \lfrac{11 N}{6}$.
\end{itemize}
In fact,
\begin{equation*}
\card{ \vlNoncrk{2+}{N}(6, 2) } = N^2 + N (N-1) + N^2 + 2 N (N-1) + N^2 + 3 N (N-1) = 9 N^2 - 6 N \text{}
\end{equation*}
and
\begin{equation*}
\card{ \vlNoncrk{2+}{N}(6, 3) } = N^3 + 2 N^2 (N-1) + N (N-1)^2 + \left( 4 \cdot {{N}\choose{3}} + {{N}\choose{2}} \right) = \frac{14}{3} N^3 - \frac{11}{2} N^2 + \frac{11}{6} N \text{.}
\end{equation*}
For example, consider V-monotonically labeled partitions $(\pi_6, \mathrm{L})$. Let $i_1 = \mathrm{L}\left( \{ 1, 6 \} \right)$, $i_2 = \mathrm{L}\left( \{ 2, 4 \} \right)$ and $i_3 = \mathrm{L}\left( \{ 3, 4 \} \right)$. There are three cases: $i_1 < i_2 < i_3$, $i_1 > i_2 < i_3$, and $i_1 > i_2 > i_3$. The second case consists of three subcases: $i_1 < i_3$, $i_1 = i_3$ and $i_1 > i_3$. The case in which $i_1 = i_3$ consists of $N \choose 2$ possibilities, each of the remaining $4$ cases consists of $N \choose 3$ labelings. The number of labelings for the other partitions $\pi \in \vlNoncrk{2+}{N}(6)$ can be computed directly.

Finally, the total number of labeled partitions in the set $\vlNoncrk{2+}{N}(6)$ equals
\begin{equation*}
\frac{14}{3} N^3 + \frac{7}{2} N^2 - \frac{19}{6} N \text{.}
\end{equation*}



\def\partitionA#1#2#3#4#5{
    \draw[-] (#3+0*#1,#4+0*#2) -- (#3+0*#1,#4+1*#2);
    \node () at (#3+0*#1,#4+0*#2) [leg] {.};
    \draw[-] (#3+1*#1,#4+0*#2) -- (#3+1*#1,#4+1*#2);
    \node () at (#3+1*#1,#4+0*#2) [leg] {.};
    \draw[-] (#3+2*#1,#4+0*#2) -- (#3+2*#1,#4+1*#2);
    \node () at (#3+2*#1,#4+0*#2) [leg] {.};
    \draw[-] (#3+3*#1,#4+0*#2) -- (#3+3*#1,#4+1*#2);
    \node () at (#3+3*#1,#4+0*#2) [leg] {.};
    \draw[-] (#3+4*#1,#4+0*#2) -- (#3+4*#1,#4+1*#2);
    \node () at (#3+4*#1,#4+0*#2) [leg] {.};
    \draw[-] (#3+5*#1,#4+0*#2) -- (#3+5*#1,#4+1*#2);
    \node () at (#3+5*#1,#4+0*#2) [leg] {.};
    \draw[-] (#3+0*#1,#4+1*#2) -- (#3+5*#1,#4+1*#2);

    \node () at (#3+2.5*#1,#4-0.5*#2) [label={[label distance = -2mm]below:#5}] {};
}

\def\partitionB#1#2#3#4#5{
    \draw[-] (#3+0*#1,#4+0*#2) -- (#3+0*#1,#4+1*#2);
    \node () at (#3+0*#1,#4+0*#2) [leg] {.};
    \draw[-] (#3+1*#1,#4+0*#2) -- (#3+1*#1,#4+1*#2);
    \node () at (#3+1*#1,#4+0*#2) [leg] {.};
    \draw[-] (#3+2*#1,#4+0*#2) -- (#3+2*#1,#4+1*#2);
    \node () at (#3+2*#1,#4+0*#2) [leg] {.};
    \draw[-] (#3+3*#1,#4+0*#2) -- (#3+3*#1,#4+1*#2);
    \node () at (#3+3*#1,#4+0*#2) [leg] {.};
    \draw[-] (#3+4*#1,#4+0*#2) -- (#3+4*#1,#4+1*#2);
    \node () at (#3+4*#1,#4+0*#2) [leg] {.};
    \draw[-] (#3+5*#1,#4+0*#2) -- (#3+5*#1,#4+1*#2);
    \node () at (#3+5*#1,#4+0*#2) [leg] {.};
    \draw[-] (#3+0*#1,#4+1*#2) -- (#3+3*#1,#4+1*#2);
    \draw[-] (#3+4*#1,#4+1*#2) -- (#3+5*#1,#4+1*#2);
     
    \node () at (#3+2.5*#1,#4-0.5*#2) [label={[label distance = -2mm]below:#5}] {};
}

\def\partitionC#1#2#3#4#5{
    \draw[-] (#3+0*#1,#4+0*#2) -- (#3+0*#1,#4+2*#2);
    \node () at (#3+0*#1,#4+0*#2) [leg] {.};
    \draw[-] (#3+1*#1,#4+0*#2) -- (#3+1*#1,#4+2*#2);
    \node () at (#3+1*#1,#4+0*#2) [leg] {.};
    \draw[-] (#3+2*#1,#4+0*#2) -- (#3+2*#1,#4+2*#2);
    \node () at (#3+2*#1,#4+0*#2) [leg] {.};
    \draw[-] (#3+3*#1,#4+0*#2) -- (#3+3*#1,#4+1*#2);
    \node () at (#3+3*#1,#4+0*#2) [leg] {.};
    \draw[-] (#3+4*#1,#4+0*#2) -- (#3+4*#1,#4+1*#2);
    \node () at (#3+4*#1,#4+0*#2) [leg] {.};
    \draw[-] (#3+5*#1,#4+0*#2) -- (#3+5*#1,#4+2*#2);
    \node () at (#3+5*#1,#4+0*#2) [leg] {.};
    \draw[-] (#3+0*#1,#4+2*#2) -- (#3+5*#1,#4+2*#2);
    \draw[-] (#3+3*#1,#4+1*#2) -- (#3+4*#1,#4+1*#2);
     
    \node () at (#3+2.5*#1,#4-0.5*#2) [label={[label distance = -2mm]below:#5}] {};
}

\def\partitionD#1#2#3#4#5{
    \draw[-] (#3+0*#1,#4+0*#2) -- (#3+0*#1,#4+1*#2);
    \node () at (#3+0*#1,#4+0*#2) [leg] {.};
    \draw[-] (#3+1*#1,#4+0*#2) -- (#3+1*#1,#4+1*#2);
    \node () at (#3+1*#1,#4+0*#2) [leg] {.};
    \draw[-] (#3+2*#1,#4+0*#2) -- (#3+2*#1,#4+1*#2);
    \node () at (#3+2*#1,#4+0*#2) [leg] {.};
    \draw[-] (#3+3*#1,#4+0*#2) -- (#3+3*#1,#4+1*#2);
    \node () at (#3+3*#1,#4+0*#2) [leg] {.};
    \draw[-] (#3+4*#1,#4+0*#2) -- (#3+4*#1,#4+1*#2);
    \node () at (#3+4*#1,#4+0*#2) [leg] {.};
    \draw[-] (#3+5*#1,#4+0*#2) -- (#3+5*#1,#4+1*#2);
    \node () at (#3+5*#1,#4+0*#2) [leg] {.};
    \draw[-] (#3+0*#1,#4+1*#2) -- (#3+2*#1,#4+1*#2);
    \draw[-] (#3+3*#1,#4+1*#2) -- (#3+5*#1,#4+1*#2);
     
    \node () at (#3+2.5*#1,#4-0.5*#2) [label={[label distance = -2mm]below:#5}] {};
}

\def\partitionE#1#2#3#4#5{
    \draw[-] (#3+0*#1,#4+0*#2) -- (#3+0*#1,#4+2*#2);
    \node () at (#3+0*#1,#4+0*#2) [leg] {.};
    \draw[-] (#3+1*#1,#4+0*#2) -- (#3+1*#1,#4+2*#2);
    \node () at (#3+1*#1,#4+0*#2) [leg] {.};
    \draw[-] (#3+2*#1,#4+0*#2) -- (#3+2*#1,#4+1*#2);
    \node () at (#3+2*#1,#4+0*#2) [leg] {.};
    \draw[-] (#3+3*#1,#4+0*#2) -- (#3+3*#1,#4+1*#2);
    \node () at (#3+3*#1,#4+0*#2) [leg] {.};
    \draw[-] (#3+4*#1,#4+0*#2) -- (#3+4*#1,#4+2*#2);
    \node () at (#3+4*#1,#4+0*#2) [leg] {.};
    \draw[-] (#3+5*#1,#4+0*#2) -- (#3+5*#1,#4+2*#2);
    \node () at (#3+5*#1,#4+0*#2) [leg] {.};
    \draw[-] (#3+0*#1,#4+2*#2) -- (#3+5*#1,#4+2*#2);
    \draw[-] (#3+2*#1,#4+1*#2) -- (#3+3*#1,#4+1*#2);
     
    \node () at (#3+2.5*#1,#4-0.5*#2) [label={[label distance = -2mm]below:#5}] {};
}

\def\partitionF#1#2#3#4#5{
    \draw[-] (#3+0*#1,#4+0*#2) -- (#3+0*#1,#4+2*#2);
    \node () at (#3+0*#1,#4+0*#2) [leg] {.};
    \draw[-] (#3+1*#1,#4+0*#2) -- (#3+1*#1,#4+2*#2);
    \node () at (#3+1*#1,#4+0*#2) [leg] {.};
    \draw[-] (#3+2*#1,#4+0*#2) -- (#3+2*#1,#4+1*#2);
    \node () at (#3+2*#1,#4+0*#2) [leg] {.};
    \draw[-] (#3+3*#1,#4+0*#2) -- (#3+3*#1,#4+1*#2);
    \node () at (#3+3*#1,#4+0*#2) [leg] {.};
    \draw[-] (#3+4*#1,#4+0*#2) -- (#3+4*#1,#4+1*#2);
    \node () at (#3+4*#1,#4+0*#2) [leg] {.};
    \draw[-] (#3+5*#1,#4+0*#2) -- (#3+5*#1,#4+2*#2);
    \node () at (#3+5*#1,#4+0*#2) [leg] {.};
    \draw[-] (#3+0*#1,#4+2*#2) -- (#3+5*#1,#4+2*#2);
    \draw[-] (#3+2*#1,#4+1*#2) -- (#3+4*#1,#4+1*#2);
     
    \node () at (#3+2.5*#1,#4-0.5*#2) [label={[label distance = -2mm]below:#5}] {};
}

\def\partitionG#1#2#3#4#5{
    \draw[-] (#3+0*#1,#4+0*#2) -- (#3+0*#1,#4+1*#2);
    \node () at (#3+0*#1,#4+0*#2) [leg] {.};
    \draw[-] (#3+1*#1,#4+0*#2) -- (#3+1*#1,#4+1*#2);
    \node () at (#3+1*#1,#4+0*#2) [leg] {.};
    \draw[-] (#3+2*#1,#4+0*#2) -- (#3+2*#1,#4+1*#2);
    \node () at (#3+2*#1,#4+0*#2) [leg] {.};
    \draw[-] (#3+3*#1,#4+0*#2) -- (#3+3*#1,#4+1*#2);
    \node () at (#3+3*#1,#4+0*#2) [leg] {.};
    \draw[-] (#3+4*#1,#4+0*#2) -- (#3+4*#1,#4+1*#2);
    \node () at (#3+4*#1,#4+0*#2) [leg] {.};
    \draw[-] (#3+5*#1,#4+0*#2) -- (#3+5*#1,#4+1*#2);
    \node () at (#3+5*#1,#4+0*#2) [leg] {.};
    \draw[-] (#3+0*#1,#4+1*#2) -- (#3+1*#1,#4+1*#2);
    \draw[-] (#3+2*#1,#4+1*#2) -- (#3+5*#1,#4+1*#2);
     
    \node () at (#3+2.5*#1,#4-0.5*#2) [label={[label distance = -2mm]below:#5}] {};
}

\def\partitionH#1#2#3#4#5{
    \draw[-] (#3+0*#1,#4+0*#2) -- (#3+0*#1,#4+1*#2);
    \node () at (#3+0*#1,#4+0*#2) [leg] {.};
    \draw[-] (#3+1*#1,#4+0*#2) -- (#3+1*#1,#4+1*#2);
    \node () at (#3+1*#1,#4+0*#2) [leg] {.};
    \draw[-] (#3+2*#1,#4+0*#2) -- (#3+2*#1,#4+1*#2);
    \node () at (#3+2*#1,#4+0*#2) [leg] {.};
    \draw[-] (#3+3*#1,#4+0*#2) -- (#3+3*#1,#4+1*#2);
    \node () at (#3+3*#1,#4+0*#2) [leg] {.};
    \draw[-] (#3+4*#1,#4+0*#2) -- (#3+4*#1,#4+1*#2);
    \node () at (#3+4*#1,#4+0*#2) [leg] {.};
    \draw[-] (#3+5*#1,#4+0*#2) -- (#3+5*#1,#4+1*#2);
    \node () at (#3+5*#1,#4+0*#2) [leg] {.};
    \draw[-] (#3+0*#1,#4+1*#2) -- (#3+1*#1,#4+1*#2);
    \draw[-] (#3+2*#1,#4+1*#2) -- (#3+3*#1,#4+1*#2);
    \draw[-] (#3+4*#1,#4+1*#2) -- (#3+5*#1,#4+1*#2);
     
    \node () at (#3+2.5*#1,#4-0.5*#2) [label={[label distance = -2mm]below:#5}] {};
}

\def\partitionI#1#2#3#4#5{
    \draw[-] (#3+0*#1,#4+0*#2) -- (#3+0*#1,#4+1*#2);
    \node () at (#3+0*#1,#4+0*#2) [leg] {.};
    \draw[-] (#3+1*#1,#4+0*#2) -- (#3+1*#1,#4+1*#2);
    \node () at (#3+1*#1,#4+0*#2) [leg] {.};
    \draw[-] (#3+2*#1,#4+0*#2) -- (#3+2*#1,#4+2*#2);
    \node () at (#3+2*#1,#4+0*#2) [leg] {.};
    \draw[-] (#3+3*#1,#4+0*#2) -- (#3+3*#1,#4+1*#2);
    \node () at (#3+3*#1,#4+0*#2) [leg] {.};
    \draw[-] (#3+4*#1,#4+0*#2) -- (#3+4*#1,#4+1*#2);
    \node () at (#3+4*#1,#4+0*#2) [leg] {.};
    \draw[-] (#3+5*#1,#4+0*#2) -- (#3+5*#1,#4+2*#2);
    \node () at (#3+5*#1,#4+0*#2) [leg] {.};
    \draw[-] (#3+0*#1,#4+1*#2) -- (#3+1*#1,#4+1*#2);
    \draw[-] (#3+2*#1,#4+2*#2) -- (#3+5*#1,#4+2*#2);
    \draw[-] (#3+3*#1,#4+1*#2) -- (#3+4*#1,#4+1*#2);
     
    \node () at (#3+2.5*#1,#4-0.5*#2) [label={[label distance = -2mm]below:#5}] {};
}

\def\partitionJ#1#2#3#4#5{
    \draw[-] (#3+0*#1,#4+0*#2) -- (#3+0*#1,#4+2*#2);
    \node () at (#3+0*#1,#4+0*#2) [leg] {.};
    \draw[-] (#3+1*#1,#4+0*#2) -- (#3+1*#1,#4+1*#2);
    \node () at (#3+1*#1,#4+0*#2) [leg] {.};
    \draw[-] (#3+2*#1,#4+0*#2) -- (#3+2*#1,#4+1*#2);
    \node () at (#3+2*#1,#4+0*#2) [leg] {.};
    \draw[-] (#3+3*#1,#4+0*#2) -- (#3+3*#1,#4+2*#2);
    \node () at (#3+3*#1,#4+0*#2) [leg] {.};
    \draw[-] (#3+4*#1,#4+0*#2) -- (#3+4*#1,#4+2*#2);
    \node () at (#3+4*#1,#4+0*#2) [leg] {.};
    \draw[-] (#3+5*#1,#4+0*#2) -- (#3+5*#1,#4+2*#2);
    \node () at (#3+5*#1,#4+0*#2) [leg] {.};
    \draw[-] (#3+0*#1,#4+2*#2) -- (#3+5*#1,#4+2*#2);
    \draw[-] (#3+1*#1,#4+1*#2) -- (#3+2*#1,#4+1*#2);
     
    \node () at (#3+2.5*#1,#4-0.5*#2) [label={[label distance = -2mm]below:#5}] {};
}

\def\partitionK#1#2#3#4#5{
    \draw[-] (#3+0*#1,#4+0*#2) -- (#3+0*#1,#4+2*#2);
    \node () at (#3+0*#1,#4+0*#2) [leg] {.};
    \draw[-] (#3+1*#1,#4+0*#2) -- (#3+1*#1,#4+1*#2);
    \node () at (#3+1*#1,#4+0*#2) [leg] {.};
    \draw[-] (#3+2*#1,#4+0*#2) -- (#3+2*#1,#4+1*#2);
    \node () at (#3+2*#1,#4+0*#2) [leg] {.};
    \draw[-] (#3+3*#1,#4+0*#2) -- (#3+3*#1,#4+2*#2);
    \node () at (#3+3*#1,#4+0*#2) [leg] {.};
    \draw[-] (#3+4*#1,#4+0*#2) -- (#3+4*#1,#4+1*#2);
    \node () at (#3+4*#1,#4+0*#2) [leg] {.};
    \draw[-] (#3+5*#1,#4+0*#2) -- (#3+5*#1,#4+1*#2);
    \node () at (#3+5*#1,#4+0*#2) [leg] {.};
    \draw[-] (#3+0*#1,#4+2*#2) -- (#3+3*#1,#4+2*#2);
    \draw[-] (#3+1*#1,#4+1*#2) -- (#3+2*#1,#4+1*#2);
    \draw[-] (#3+4*#1,#4+1*#2) -- (#3+5*#1,#4+1*#2);
     
    \node () at (#3+2.5*#1,#4-0.5*#2) [label={[label distance = -2mm]below:#5}] {};
}

\def\partitionL#1#2#3#4#5{
    \draw[-] (#3+0*#1,#4+0*#2) -- (#3+0*#1,#4+2*#2);
    \node () at (#3+0*#1,#4+0*#2) [leg] {.};
    \draw[-] (#3+1*#1,#4+0*#2) -- (#3+1*#1,#4+1*#2);
    \node () at (#3+1*#1,#4+0*#2) [leg] {.};
    \draw[-] (#3+2*#1,#4+0*#2) -- (#3+2*#1,#4+1*#2);
    \node () at (#3+2*#1,#4+0*#2) [leg] {.};
    \draw[-] (#3+3*#1,#4+0*#2) -- (#3+3*#1,#4+1*#2);
    \node () at (#3+3*#1,#4+0*#2) [leg] {.};
    \draw[-] (#3+4*#1,#4+0*#2) -- (#3+4*#1,#4+2*#2);
    \node () at (#3+4*#1,#4+0*#2) [leg] {.};
    \draw[-] (#3+5*#1,#4+0*#2) -- (#3+5*#1,#4+2*#2);
    \node () at (#3+5*#1,#4+0*#2) [leg] {.};
    \draw[-] (#3+0*#1,#4+2*#2) -- (#3+5*#1,#4+2*#2);
    \draw[-] (#3+1*#1,#4+1*#2) -- (#3+3*#1,#4+1*#2);
     
    \node () at (#3+2.5*#1,#4-0.5*#2) [label={[label distance = -2mm]below:#5}] {};
}

\def\partitionM#1#2#3#4#5{
    \draw[-] (#3+0*#1,#4+0*#2) -- (#3+0*#1,#4+2*#2);
    \node () at (#3+0*#1,#4+0*#2) [leg] {.};
    \draw[-] (#3+1*#1,#4+0*#2) -- (#3+1*#1,#4+1*#2);
    \node () at (#3+1*#1,#4+0*#2) [leg] {.};
    \draw[-] (#3+2*#1,#4+0*#2) -- (#3+2*#1,#4+1*#2);
    \node () at (#3+2*#1,#4+0*#2) [leg] {.};
    \draw[-] (#3+3*#1,#4+0*#2) -- (#3+3*#1,#4+1*#2);
    \node () at (#3+3*#1,#4+0*#2) [leg] {.};
    \draw[-] (#3+4*#1,#4+0*#2) -- (#3+4*#1,#4+1*#2);
    \node () at (#3+4*#1,#4+0*#2) [leg] {.};
    \draw[-] (#3+5*#1,#4+0*#2) -- (#3+5*#1,#4+2*#2);
    \node () at (#3+5*#1,#4+0*#2) [leg] {.};
    \draw[-] (#3+0*#1,#4+2*#2) -- (#3+5*#1,#4+2*#2);
    \draw[-] (#3+1*#1,#4+1*#2) -- (#3+4*#1,#4+1*#2);
     
    \node () at (#3+2.5*#1,#4-0.5*#2) [label={[label distance = -2mm]below:#5}] {};
}

\def\partitionN#1#2#3#4#5{
    \draw[-] (#3+0*#1,#4+0*#2) -- (#3+0*#1,#4+2*#2);
    \node () at (#3+0*#1,#4+0*#2) [leg] {.};
    \draw[-] (#3+1*#1,#4+0*#2) -- (#3+1*#1,#4+1*#2);
    \node () at (#3+1*#1,#4+0*#2) [leg] {.};
    \draw[-] (#3+2*#1,#4+0*#2) -- (#3+2*#1,#4+1*#2);
    \node () at (#3+2*#1,#4+0*#2) [leg] {.};
    \draw[-] (#3+3*#1,#4+0*#2) -- (#3+3*#1,#4+1*#2);
    \node () at (#3+3*#1,#4+0*#2) [leg] {.};
    \draw[-] (#3+4*#1,#4+0*#2) -- (#3+4*#1,#4+1*#2);
    \node () at (#3+4*#1,#4+0*#2) [leg] {.};
    \draw[-] (#3+5*#1,#4+0*#2) -- (#3+5*#1,#4+2*#2);
    \node () at (#3+5*#1,#4+0*#2) [leg] {.};
    \draw[-] (#3+0*#1,#4+2*#2) -- (#3+5*#1,#4+2*#2);
    \draw[-] (#3+1*#1,#4+1*#2) -- (#3+2*#1,#4+1*#2);
    \draw[-] (#3+3*#1,#4+1*#2) -- (#3+4*#1,#4+1*#2);
     
    \node () at (#3+2.5*#1,#4-0.5*#2) [label={[label distance = -2mm]below:#5}] {};
}

\def\partitionO#1#2#3#4#5{
    \draw[-] (#3+0*#1,#4+0*#2) -- (#3+0*#1,#4+3*#2);
    \node () at (#3+0*#1,#4+0*#2) [leg] {.};
    \draw[-] (#3+1*#1,#4+0*#2) -- (#3+1*#1,#4+2*#2);
    \node () at (#3+1*#1,#4+0*#2) [leg] {.};
    \draw[-] (#3+2*#1,#4+0*#2) -- (#3+2*#1,#4+1*#2);
    \node () at (#3+2*#1,#4+0*#2) [leg] {.};
    \draw[-] (#3+3*#1,#4+0*#2) -- (#3+3*#1,#4+1*#2);
    \node () at (#3+3*#1,#4+0*#2) [leg] {.};
    \draw[-] (#3+4*#1,#4+0*#2) -- (#3+4*#1,#4+2*#2);
    \node () at (#3+4*#1,#4+0*#2) [leg] {.};
    \draw[-] (#3+5*#1,#4+0*#2) -- (#3+5*#1,#4+3*#2);
    \node () at (#3+5*#1,#4+0*#2) [leg] {.};
    \draw[-] (#3+0*#1,#4+3*#2) -- (#3+5*#1,#4+3*#2);
    \draw[-] (#3+1*#1,#4+2*#2) -- (#3+4*#1,#4+2*#2);
    \draw[-] (#3+2*#1,#4+1*#2) -- (#3+3*#1,#4+1*#2);
     
    \node () at (#3+2.5*#1,#4-0.5*#2) [label={[label distance = -2mm]below:#5}] {};
}

\begin{figure}
\centering
\begin{tikzpicture}
   \pgfmathsetmacro {\dx}{0.25}
   \pgfmathsetmacro {\Dx}{12*\dx} 
   \pgfmathsetmacro {\dy}{0.35}
   \pgfmathsetmacro {\x}{0}
   \pgfmathsetmacro {\y}{0}


   \tikzstyle{leg} = [circle, draw=black, fill=black!100, text=black!100, thin, inner sep=0pt, minimum size=2.0]

    \partitionH{\dx}{\dy}{\x+0*\Dx}{\y+0*\dy}{$\pi_{1}$}
    \partitionI{\dx}{\dy}{\x+1*\Dx}{\y+0*\dy}{$\pi_{2}$}
    \partitionG{\dx}{\dy}{\x+2*\Dx}{\y+0*\dy}{$\pi_{3}$}
    \partitionK{\dx}{\dy}{\x+3*\Dx}{\y+0*\dy}{$\pi_{4}$}
    \partitionN{\dx}{\dy}{\x+4*\Dx}{\y+0*\dy}{$\pi_{5}$}
    \partitionO{\dx}{\dy}{\x+0*\Dx}{\y-5*\dy}{$\pi_{6}$}
    \partitionM{\dx}{\dy}{\x+1*\Dx}{\y-5*\dy}{$\pi_{7}$}

    \partitionD{\dx}{\dy}{\x+2*\Dx}{\y-5*\dy}{$\pi_{8}$}
    \partitionF{\dx}{\dy}{\x+3*\Dx}{\y-5*\dy}{$\pi_{9}$}
    \partitionL{\dx}{\dy}{\x+4*\Dx}{\y-5*\dy}{$\pi_{10}$}
    \partitionB{\dx}{\dy}{\x+0*\Dx}{\y-10*\dy}{$\pi_{11}$}
    \partitionC{\dx}{\dy}{\x+1*\Dx}{\y-10*\dy}{$\pi_{12}$}
    \partitionE{\dx}{\dy}{\x+2*\Dx}{\y-10*\dy}{$\pi_{13}$}
    \partitionJ{\dx}{\dy}{\x+3*\Dx}{\y-10*\dy}{$\pi_{14}$}

    \partitionA{\dx}{\dy}{\x+4*\Dx}{\y-10*\dy}{$\pi_{15}$}    
\end{tikzpicture}
\caption{All noncrossing partitions $\pi \in \noncrk{2+}(6)$.}
\label{picture:NC_two-plus}
\end{figure}

\end{example}

We first establish the formula for the exact moments.
\begin{theorem}
\label{thm:exactMomentFormula}
For $N \in \NNp$ and $\lambda \in \RR$, the moments of $S_{N}(\lambda)$ with respect to $\varphi$ are given by
\begin{equation}
\label{eq:exactMomentFormula}
\momentExact{n}{\lambda}{N} = \sum_{k=0}^{\lfloor \lfrac{n}{2} \rfloor} \frac{ \card{ \vlNoncrk{2+}{N}(n, k) } }{N^k} \cdot \lambda^{n-2k} \text{,}
\end{equation}
where $n \in \NN$.
\end{theorem}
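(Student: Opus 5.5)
We expand the $n$th power of $S_N(\lambda)$ multilinearly. Writing $S_N(\lambda)=\sum_{i=1}^N\bigl(N^{-1/2}A_i^{+}+N^{-1/2}A_i^{-}+\lambda A_i^{\circ}\bigr)$, we get
\begin{equation*}
\momentExact{n}{\lambda}{N}=\sum_{\mathbf{i}\in[N]^n}\ \sum_{\pmb{\varepsilon}\in\{-,\circ,+\}^n} c(\pmb{\varepsilon})\,\varphi\bigl(A^{\varepsilon_1}_{i_1}\cdots A^{\varepsilon_n}_{i_n}\bigr),
\end{equation*}
where $c(\pmb{\varepsilon})=N^{-a/2}\lambda^{b}$, with $a$ the number of $\pm$ entries and $b$ the number of $\circ$ entries.

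By Proposition~\ref{proposition:Riordan}, only $\pmb{\varepsilon}\in\riordan{n}$ contribute. Proposition~\ref{proposition:bijRiordan} then gives a unique $\pi\in\noncrk{2+}(n)$ for each such $\pmb{\varepsilon}$: its left legs carry $-$, its right legs carry $+$, and its middle legs carry $\circ$. Hence $A^{\varepsilon_1}_{i_1}\cdots A^{\varepsilon_n}_{i_n}=\partOpDiscA{\pi}{\mathbf{i}}$. If $\pi$ has $k$ blocks, it has $k$ leftmost and $k$ rightmost legs, so $a=2k$ and $b=n-2k$. The coefficient is therefore $N^{-k}\lambda^{n-2k}$, and the range $0\le k\le\lfloor n/2\rfloor$ follows from $n\ge 2k$.

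Next we apply Lemma~\ref{lemLS}. For fixed $\pi$, $\varphi(\partOpDiscA{\pi}{\mathbf{i}})$ equals $1$ when there is a V-monotone labeling $\mathrm{L}\colon\pi\to[N]$ with $\partOpDiscA{\pi}{\mathbf{i}}=\partOpDiscB{\pi}{\mathrm{L}}$, and equals $0$ otherwise. So for fixed $\pi$ the sum over $\mathbf{i}$ counts those $\mathbf{i}$ that arise from some V-monotone labeling.

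The one point needing care is that this correspondence is bijective. The map $\mathrm{L}\mapsto\mathbf{j}$ with $j_l=\mathrm{L}(B)$ for $l\in B$ is injective, because every block is nonempty. Conversely, a sequence $\mathbf{i}$ that is constant on blocks determines $\mathrm{L}$ uniquely. The proof of Lemma~\ref{lemLS} gives the Kronecker-delta factors forcing $\mathbf{i}$ to be constant on blocks, and the $\vChar$ factors forcing the labeling to be V-monotone. Note also that distinct $\pi$ yield distinct $\pmb{\varepsilon}$ sequences, so there is no double counting across partitions.

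Summing over $\pi\in\noncrk{2+}(n,k)$ and then over $k$ gives
\begin{equation*}
\momentExact{n}{\lambda}{N}=\sum_{k}\ \sum_{\pi\in\noncrk{2+}(n,k)}\#\{\mathrm{L}\}\,N^{-k}\lambda^{n-2k}=\sum_{k}\frac{\card{\vlNoncrk{2+}{N}(n,k)}}{N^k}\lambda^{n-2k}.
\end{equation*}
For $n=0$ both sides equal $1$.
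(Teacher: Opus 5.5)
Your proof is correct and takes the same route as the paper. You expand $S_N^n(\lambda)$ multilinearly, restrict to Riordan sequences via Propositions~\ref{proposition:Riordan} and~\ref{proposition:bijRiordan}, read off the weight $N^{-|\pi|}\lambda^{n-2|\pi|}$ from the leg counts, and use Lemma~\ref{lemLS} to turn the sum over $\mathbf{i}$ into a count of V-monotone labelings. Your explicit check that the contributing index sequences $\mathbf{i}$ correspond bijectively to V-monotone labelings, via the Kronecker-delta and $\vChar$ factors in the proof of Lemma~\ref{lemLS}, is a step the paper leaves implicit.
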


\begin{proof}
Fix $n \in \NN$, $\lambda \in \RR$ and $N \in \NNp$. We have
\begin{equation*}
\momentExact{n}{\lambda}{N} = N^{-\lfrac{n}{2}} \sum_{ \pmb{\varepsilon} \in \{ -, \circ, + \}^n } \sum_{ \mathbf{i} \in [N]^n } \left( \sqrt{N} \lambda \right)^{\card{ \{ l \in [n] : \varepsilon_l = \circ \} }} \varphi \left( A^{\varepsilon_1}_{i_1} \cdot \ldots \cdot A^{\varepsilon_n}_{i_n} \right) \text{.}
\end{equation*}
with $\pmb{\varepsilon} = (\varepsilon_1, \ldots, \varepsilon_n)$ and $\mathbf{i} = (i_1, \ldots, i_n)$. For fixed $\mathbf{i} \in [N]^n$ and each $\pmb{\varepsilon} \in \{ -, \circ, + \}^n$, there exists at most one $\pi \in \noncrk{2+}(n)$ such that $A^{\varepsilon_1}_{i_1} \cdot \ldots \cdot A^{\varepsilon_n}_{i_n} = \partOpDiscA{\pi}{\mathbf{i}}$. Moreover, the total number of middle legs in partition $\pi \in \noncrk{2+}$ is $\ml{\pi} = n - 2 \cdot \card{\pi}$. Combining these facts with Propositions~\ref{proposition:Riordan},~\ref{proposition:bijRiordan} and Lemma~\ref{lemLS}, we get
\begin{equation*}
\label{eq:exactMomentFormula_piL}
\momentExact{n}{\lambda}{N} = N^{-\lfrac{n}{2}} \sum_{\pi \in \noncrk{2+}(n)} \left( \sqrt{N} \lambda \right)^{\ml{\pi}} \sum_{\mathbf{i} \in [N]^n} \varphi \left( \partOpDiscA{\pi}{\mathbf{i}} \right) = \sum_{(\pi, \mathrm{L}) \in \vlNoncrk{2+}{N}(n)} \frac{\lambda^{\ml{\pi}}}{N^{\card{\pi}}} \text{,}
\end{equation*}
which yields~\eqref{eq:exactMomentFormula}.
\end{proof}

\begin{example}
Let us now give the exact formula for the moments of lowest order for $S_{N}(\lambda)$. We have
\begin{itemize}
    \item $\momentExact{0}{\lambda}{N} = 1 \text{,} \quad \momentExact{1}{\lambda}{N} = 0 \text{,} \quad \momentExact{2}{\lambda}{N} = 1 \text{,} \quad \momentExact{3}{\lambda}{N} = \lambda$,

    \item $\momentExact{4}{\lambda}{N} = \lambda^2 + \left( 2 - \tfrac{1}{N} \right)$,
    \item $\momentExact{5}{\lambda}{N} = \lambda^3 + \left( 5 - \tfrac{3}{N} \right) \lambda$,
    \item $\momentExact{6}{\lambda}{N} = \lambda^4 + \left( 9 - \tfrac{6}{N} \right) \lambda ^2 + \left( \tfrac{14}{3} - \tfrac{11}{2 N} + \tfrac{11}{6 N^2} \right)$.
\end{itemize}
The calculation of $\momentExact{6}{\lambda}{N}$ is based on Example~\ref{example:VL_two-plus}. The remaining moments can be computed by analyzing labelings of partitions $\pi \in \noncrk{2+}{(n, k)}$, $1 \leq k \leq n \leq 5$.
\end{example}

\begin{corollary}
\label{corollary:exactMomentGaussian}
For $\lambda = 0$ and nonnegative integers $n$, $N$, we have
\begin{equation*}
\momentExact{2n}{0}{N} = \frac{\card{ \vlNoncrk{2}{N}(2n) }}{N^n} \text{.}
\end{equation*}
All odd moments vanish.
\end{corollary}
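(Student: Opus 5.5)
The plan is to specialize Theorem~\ref{thm:exactMomentFormula} to $\lambda = 0$. We use the convention $0^0 = 1$, which is forced because $S_N(0)$ is the $\lambda = 0$ case of the operator sum and the expansion in the proof of Theorem~\ref{thm:exactMomentFormula} is a polynomial identity in $\lambda$. Then in~\eqref{eq:exactMomentFormula} every term with $n - 2k > 0$ vanishes, since it carries a positive power of $\lambda = 0$. Only the terms with $n = 2k$ survive. Such a term exists exactly when $n$ is even, and then $k = n/2$ is the unique surviving index.

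For odd $n$ the sum is empty of nonzero terms, so $\momentExact{n}{0}{N} = 0$. Alternatively, one can argue directly: a partition in $\noncrk{2+}(n, k)$ with $n = 2k$ has no middle legs, and none is possible for odd $n$.

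For even order $2n$, the only surviving term is $k = n$, contributing $\card{\vlNoncrk{2+}{N}(2n, n)}/N^n$.

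The last step is to identify $\vlNoncrk{2+}{N}(2n, n)$ with $\vlNoncrk{2}{N}(2n)$, the V-monotonically labeled noncrossing pair partitions. A partition of $[2n]$ into $n$ blocks, each of size at least $2$, must have every block of size exactly $2$, because the block sizes sum to $2n$. Conversely, every pair partition has exactly $n$ blocks, all of size at least $2$. The V-monotone labeling condition is the same on both sides, so the two sets coincide and the formula follows.

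The case $N = 0$ is only a degenerate convention (the paper allows nonnegative $N$ in the statement). It holds with the stated convention that $\vlNoncrk{2+}{0}(n,k) = \emptyset$ except for $(0,0)$. There is no real obstacle here; the only point needing care is the $0^0 = 1$ convention for the $k = n/2$ term.
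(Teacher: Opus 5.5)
Your proposal is correct and is the direct specialization the paper intends; it gives no separate proof. You set $\lambda=0$ in Theorem~\ref{thm:exactMomentFormula}, keep only the $k=n/2$ term with $\lambda^{0}=1$ (the reading forced because $S_N(0)^n$ contains no conservation operators), and note that $n$ blocks of size at least $2$ in $[2n]$ must all be pairs. The one flaw is your $N=0$ remark: Theorem~\ref{thm:exactMomentFormula} is stated only for $N\in\NNp$, and for $N=0$, $n\geq 1$ the right-hand side is $0/0$, so no convention makes that case hold. This is sloppiness in the statement itself rather than in your argument.
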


\begin{example}
Consider a single operator $A_{i}^{+} + A_{i}^{-} + \lambda A_{i}^{\circ}$ for $\lambda \in \RR$ and $i \in \NNp$, which has the same distribution as $S_{1}(\lambda)$. Its $n$th moment has the form
\begin{equation}
\label{eq:exactMomentFormulaSingle}
\momentExact{n}{\lambda}{1} = \sum_{k=0}^{\lfloor \lfrac{n}{2} \rfloor} \card{ \mathcal{I}^{2+}(n, k) } \cdot \lambda^{n - 2k} \text{,}
\end{equation}
where $\mathcal{I}^{2+}(n, k)$ for $n, k \in \NN$, is the set of all interval partitions $\pi \in \noncrk{2+}(n, k)$ (see Corollary~\ref{corollary:labeledPartOpInterval}). It is worth mentioning that the distribution of $S_1(\lambda)$ is the same as that obtained for monotone independence~\cite[Section~3]{Mur0}, free independence~\cite[Theorem~3.7]{LO2023}, and bm-independence~\cite[Appendix~A]{LOJW2024}.

Moreover, the moment sequence satisfies the following recursive formula
\begin{equation*}
\momentExact{0}{\lambda}{1} = 1 \text{,} \quad
\momentExact{1}{\lambda}{1} = 0 \text{,} \qquad
\momentExact{n+2}{\lambda}{1} = \lambda \momentExact{n+1}{\lambda}{1} + \momentExact{n}{\lambda}{1} \text{,}
\quad n \in \NN \text{,}
\end{equation*}
and, starting from $0$, the sequence equals
\begin{equation*}
\left( 1, 0, 1, \lambda, \lambda^2 + 1, \lambda^3 + 2 \lambda, \lambda^4 + 3 \lambda^2 + 1, \ldots \right) \text{.}
\end{equation*}
Indeed, let $N_{n, k} = \card{\mathcal{I}^{2+}(n, k)}$ for $n, k \in \NN$. The recurrence follows from the fact that
\begin{equation*}
N_{n+2, k+1} = N_{n+1, k+1} + N_{n, k} \text{,}
\end{equation*}
for each $n, k \in \NN$. For $\lambda=1$ and $n \in \NNp$, we have $\momentExact{n}{1}{1} = F_{n-1}$, where $F_n$ is the $n$th Fibonacci number.

Using the above recurrence for moments, one can show that the moment generating function has the following form:
\begin{equation*}
\sum_{n=0}^\infty \momentExact{n}{\lambda}{1} z^n = \frac{\lambda z - 1}{z^2 + \lambda z - 1} \text{.}
\end{equation*}
The probability distribution $\mu_\lambda$ of $S_{1}(\lambda)$ with respect to the vacuum state $\varphi$ is the Bernoulli distribution
\begin{equation*}
\mu_\lambda = p_1 \delta_{x_1} + p_2 \delta_{x_2} \text{,}
\end{equation*}
where $x_{i} = \dfrac{ \lambda + (-1)^i \cdot \sqrt{\lambda^2+4} }{2}$ and $p_{i} = \dfrac{1}{2} - \dfrac{(-1)^i \cdot \lambda}{2\sqrt{\lambda^2+4}}$, $i = 1, 2$ (see also~\cite[Proposition~5.1]{CGW2021}).
\end{example}

The distribution of each summand in $S_{N}(\lambda)$ is described in the following.
\begin{corollary}
\label{corollary:summand}
Let $a_i^{(N)} = \lfrac{(A_i^{+} + A_i^{-})}{\sqrt{N}} + \lambda A_i^{\circ}$, $i \in \NNp$. The operator $a_i^{(N)}$ has the distribution
\begin{equation*}
\label{eq:summand}
\left( \frac{1}{2} + \frac{\lambda}{2 \sqrt{\lambda^2 + \lfrac{4}{N}}} \right) \delta_{\frac{\lambda - \sqrt{\lambda^2 + \lfrac{4}{N}}}{2}} + \left( \frac{1}{2} - \frac{\lambda}{2 \sqrt{\lambda^2 + \lfrac{4}{N}}} \right) \delta_{\frac{\lambda + \sqrt{\lambda^2 + \lfrac{4}{N}}}{2}} \text{,}
\end{equation*}
for any $N \in \NNp$.
\end{corollary}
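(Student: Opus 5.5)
We reduce the statement to the single-operator computation already carried out for $S_1(\lambda)$ in the preceding example. The operator $a_i^{(N)} = N^{-1/2}(A_i^{+} + A_i^{-}) + \lambda A_i^{\circ}$ equals $N^{-1/2}\bigl(A_i^{+} + A_i^{-} + \lambda\sqrt{N}\,A_i^{\circ}\bigr)$. The operator in parentheses is exactly the single operator of the example, with parameter $\lambda' = \lambda\sqrt{N}$ in place of $\lambda$. Its vacuum distribution is therefore the Bernoulli measure $p_1\delta_{x_1} + p_2\delta_{x_2}$ with
\[
x_j = \frac{\lambda' + (-1)^j\sqrt{\lambda'^2 + 4}}{2}, \qquad p_j = \frac{1}{2} - \frac{(-1)^j\lambda'}{2\sqrt{\lambda'^2 + 4}}.
\]
The distribution of $a_i^{(N)}$ is then the push-forward of this measure under $x \mapsto x/\sqrt{N}$, since the moments of a scalar multiple $c\,b$ are $c^n\varphi(b^n)$ and the measure is compactly supported, hence determined by its moments.

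The computation itself is short. The atoms are
\[
\frac{x_j}{\sqrt{N}} = \frac{\lambda\sqrt{N} + (-1)^j\sqrt{N\lambda^2 + 4}}{2\sqrt{N}} = \frac{\lambda + (-1)^j\sqrt{\lambda^2 + 4/N}}{2}.
\]
The weights are
\[
p_j = \frac{1}{2} - \frac{(-1)^j\lambda\sqrt{N}}{2\sqrt{N\lambda^2 + 4}} = \frac{1}{2} - \frac{(-1)^j\lambda}{2\sqrt{\lambda^2 + 4/N}}.
\]
For $j = 1$ this gives the atom $\tfrac{\lambda - \sqrt{\lambda^2 + 4/N}}{2}$ with weight $\tfrac12 + \tfrac{\lambda}{2\sqrt{\lambda^2 + 4/N}}$. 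For $j = 2$ it gives the atom $\tfrac{\lambda + \sqrt{\lambda^2 + 4/N}}{2}$ with weight $\tfrac12 - \tfrac{\lambda}{2\sqrt{\lambda^2 + 4/N}}$. This is exactly the claim, and the independence of the answer from $i$ is clear because $i$ never enters.

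The only step needing care is the Bernoulli distribution of the single operator itself, which the example states but does not verify. If we re-derive it, the route is to use the recurrence $m_{n+2} = \lambda m_{n+1} + m_n$, with $m_0 = 1$ and $m_1 = 0$, obtained from interval partitions via Corollary~\ref{corollary:labeledPartOpInterval}. The roots of $x^2 - \lambda x - 1$ are $x_1, x_2$, so the moments have the form $p_1x_1^n + p_2x_2^n$. The weights follow from $p_1 + p_2 = 1$ and $p_1x_1 + p_2x_2 = 0$, which give $p_1 = x_2/(x_2 - x_1)$ and $p_2 = -x_1/(x_2 - x_1)$. These agree with the stated formulas.

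Alternatively, the recurrence can be checked directly for $a_i^{(N)}$: its moments satisfy $m_{n+2} = \lambda m_{n+1} + \tfrac{1}{N}m_n$, and the characteristic roots are the two atoms above. No genuine obstacle is expected; the main point is to keep the sign conventions for $j = 1, 2$ consistent.
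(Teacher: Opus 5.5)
Your proposal is correct and follows essentially the paper's route. The paper gives no separate proof of this corollary and treats it as immediate from the preceding example for $S_1(\lambda)$ via the substitution $\lambda \mapsto \sqrt{N}\lambda$ (the same substitution it uses in the proof of Proposition~\ref{proposition:partialPLT}), followed by rescaling $x \mapsto x/\sqrt{N}$, which is exactly your argument. Your derivation of the weights from $p_1+p_2=1$ and $p_1x_1+p_2x_2=0$ supplies the check that the example leaves implicit.
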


Our limit theorem is almost identical to the law of small numbers (Poisson limit theorem). The only difference is that in our case the first moment vanishes, whereas in the law of small numbers it is asymptotically equal to $\lfrac{\lambda}{N}$.
\begin{proposition}
\label{proposition:partialPLT}
The family of operators $\{ a_{N,i} : N \in \NNp, i \in [N] \}$ has the following properties:
\begin{itemize}
    \item[ (i)] $a_{N, 1} \ldots, a_{N, N}$ are self-adjoint, V-monotonically independent, and identically distributed random variables with mean $0$,
    \item[(ii)] there exist constants $a, b \in \RR$ such that for each $i \in \NNp$ and any natural $n \geq 2$, we have
\begin{equation*}
\lim \limits_{N \to \infty} N \varphi\left( a_{N, i}^n \right) = a^n b \text{.}
\end{equation*}
\end{itemize}
\end{proposition}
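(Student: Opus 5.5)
The plan is to check each property directly, using the single-site computations already done in the paper. Throughout, I take $a_{N,i}\coloneqq a_i^{(N)} = \lfrac{(A_i^{+}+A_i^{-})}{\sqrt{N}}+\lambda A_i^{\circ}$ for $i\in[N]$, as in Corollary~\ref{corollary:summand}.

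For (i), the four claims are handled separately.
\begin{itemize}
\item \emph{Self-adjointness.} This follows from $(A_i^{+})^*=A_i^{-}$, from $A_i^{\circ}$ being self-adjoint, and from $\lambda\in\RR$.
\item \emph{V-monotone independence.} Since $A_i^{\circ}=A_i^{+}A_i^{-}$, we have $a_{N,i}\in\mathcal{A}_i=\mathrm{alg}\{A_i^{+},A_i^{-}\}$. Proposition~\ref{proposition:Vindependence} then gives V-monotone independence of $a_{N,1},\ldots,a_{N,N}$ by the definition for random variables.
\item \emph{Identical distribution.} Corollary~\ref{corollary:summand} gives a distribution depending only on $N$ and $\lambda$, not on $i$.
\item \emph{Mean $0$.} We have $A_i^{-}\Omega=A_i^{\circ}\Omega=0$ and $A_i^{+}\Omega=e_i\perp\Omega$, so $\varphi(a_{N,i})=0$. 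Equivalently, there is no element of $\noncrk{2+}(1)$.
\end{itemize}

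For (ii), I would compute $\varphi(a_{N,i}^n)$ combinatorially, exactly as in the proof of Theorem~\ref{thm:exactMomentFormula} but with the single index $i$. Expanding the power and applying Propositions~\ref{proposition:Riordan} and~\ref{proposition:bijRiordan} together with Lemma~\ref{lemLS}, only words $\pmb{\varepsilon}$ corresponding to some $\pi\in\noncrk{2+}(n)$ whose constant labeling by $i$ is V-monotone survive. A constant labeling is V-monotone exactly when no maximal chain has length $\geq 2$, because a word $(i,i)$ is not in $\vWords{}{I}$. This forces $\pi$ to be an interval partition, matching Corollary~\ref{corollary:labeledPartOpInterval} and \eqref{eq:exactMomentFormulaSingle}.

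Each block contributes one creation and one annihilation factor, hence a weight $N^{-1}$. Each middle leg contributes a weight $\lambda$. This gives
\begin{equation*}
\varphi\left(a_{N,i}^n\right)=\sum_{k=1}^{\lfloor \lfrac{n}{2}\rfloor}\card{\mathcal{I}^{2+}(n,k)}\,N^{-k}\lambda^{n-2k}, \qquad n\geq 1.
\end{equation*}
Multiplying by $N$, every term with $k\geq 2$ vanishes as $N\to\infty$. The only interval partition with one block is $\{[1,n]\}$, so $\card{\mathcal{I}^{2+}(n,1)}=1$ for $n\geq 2$, and therefore
\begin{equation*}
\lim_{N\to\infty} N\varphi\left(a_{N,i}^n\right)=\lambda^{n-2}, \qquad n\geq 2.
\end{equation*}
As a cross-check, the same limit should follow from the explicit two-atom law of Corollary~\ref{corollary:summand} by expanding $N(p_1x_1^n+p_2x_2^n)$ in powers of $\lfrac{1}{N}$.

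For $\lambda\neq 0$, the choice $a=\lambda$, $b=\lambda^{-2}$ then gives the required form $a^n b$.

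The main obstacle is the degenerate case $\lambda=0$. There the limit equals $1$ for $n=2$ and $0$ for $n\geq 3$. The literal form $a^nb$ fails, since $a^2b=1$ would force $a\neq 0$ and hence $a^3b\neq 0$. I would therefore either restrict (ii) to $\lambda\neq 0$ or read it with the convention $0^0=1$ as ``$\lambda^{n-2}$''. I would state this explicitly, noting that the case $\lambda=0$ is the Gaussian case already covered by Corollary~\ref{corollary:exactMomentGaussian}.
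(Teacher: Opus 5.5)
Your proposal is correct and takes essentially the paper's route: V-monotone independence from Proposition~\ref{proposition:Vindependence}, and the limit $\lambda^{n-2}$ (hence $a=\lambda$, $b=\lambda^{-2}$) from the single-site interval-partition formula \eqref{eq:exactMomentFormulaSingle} with $\lambda$ replaced by $\sqrt{N}\lambda$; you re-derive that formula instead of citing it. Your caveat about $\lambda=0$ is also valid: the paper's choice $b=\lambda^{-2}$ tacitly assumes $\lambda\neq 0$, and for $\lambda=0$ no real constants $a,b$ satisfy (ii) as literally stated.
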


\begin{proof}
The V-monotonically independence follows directly from Proposition~\ref{proposition:Vindependence}. The conditions for moments, with $a = \lambda$ and $b = \lfrac{1}{\lambda^2}$, are implied by~\eqref{eq:exactMomentFormulaSingle}, in which we replace $\lambda$ by $\sqrt{N} \lambda$.
\end{proof}

Now, let us proceed to find the limit moments of $S_{N}(\lambda)$. For that, we use the notion of ordered partitions to study their combinatorics, which we now recall.
\begin{definition}
\label{def:orderedPartitions}

By an \emph{ordered partition} of a set $X \subseteq \ZZ$, we mean a sequence of blocks $\pi = (B_1, \ldots, B_k)$ such that $\widehat{\pi} = \{ B_1, \ldots, B_k \} \in \part(X)$. Each block $B_j$ has its unique \emph{position} in $\pi$, i.e., the \emph{order} of $B_j$ which is the number $j \in [k]$, hence the name `ordered partition'. Note that for every partition $\{ B_1, \ldots, B_k \}$ there exist exactly $k!$ ordered partitions that have the same blocks. Note also that the ordered partition $\pi$ can be seen as a labeled partition $(\widehat{\pi}, \mathrm{L})$, where $\mathrm{L} \colon \widehat{\pi} \to [k]$ assigns to each block its position in $\pi$.

The set of all ordered partitions of $X$ will be denoted by $\oPart(X)$. For noncrossing ordered partitions, we use the same notation as for noncrossing partitions, with the prefix $\mathcal{O}$: $\oNoncr(n)$, $\oNoncrk{2}(n)$, $\oNoncrk{2+}(n)$, $\oNoncrk{2+}(n, k)$, etc., for $k, n \in \NN$. For example, $\oNoncrk{2+}(n, k)$ stands for the set of all noncrossing ordered partitions of the set $[n]$ that have exactly $k$ blocks, each of them having at least two legs.

For $n, k \in \NN$, we denote by $\vNoncrk{2+}(n)$ and $\vNoncrk{2+}(n, k)$ the set of all ordered partitions $\pi$ of $\oNoncrk{2+}(n)$ and $\oNoncrk{2+}(n, k)$, respectively, which are V-monotonically labeled as labeled partitions $(\widehat{\pi}, \mathrm{L})$. This kind of ordered partitions will be called \emph{V-monotone partitions}.
\end{definition}

\begin{example}
\label{example:OV_two-plus}
The set $\oNoncrk{2+}(6) \setminus \vNoncrk{2+}(6)$ consists of $2$ elements, presented in Fig.~\ref{picture:OV_two-plus} in the second row --- the middle and the right ones. We have
\begin{itemize}
    \item $\card{ \vNoncrk{2+}(6, 1) } = 1$,
    \item $\card{ \vNoncrk{2+}(6, 2) } = 18$,
    \item $\card{ \vNoncrk{2+}(6, 3) } = 28$.
\end{itemize}
Indeed, $\card{ \vNoncrk{2+}(6, 2) } = 9 \cdot 2! = 18$ and $\card{ \vNoncrk{2+}(6, 3) } = 5 \cdot 3! - 2 = 28$, see Fig.~\ref{picture:NC_two-plus}. Eventually, the cardinality of $\vNoncrk{2+}(6)$ is $47$.
\begin{figure}
    \centering



\begin{tikzpicture}
    \tikzstyle{leg} = [circle, draw=black, fill=black!100, text=black!100, thin, inner sep=0pt, minimum size=2.0]

    \pgfmathsetmacro {\dx}{0.35}
    \pgfmathsetmacro {\dy}{0.60}

    \pgfmathsetmacro {\x}{0*\dx}
    \pgfmathsetmacro {\y}{0}

    \draw[-] (\x+0*\dx,\y+0*\dy) -- (\x+0*\dx,\y+3*\dy);
    \node () at (\x+0*\dx,\y+0*\dy) [leg] {.};
    \draw[-] (\x+5*\dx,\y+0*\dy) -- (\x+5*\dx,\y+3*\dy);
    \node () at (\x+5*\dx,\y+0*\dy) [leg] {.};
    \draw[-] (\x+0*\dx,\y+3*\dy) -- (\x+5*\dx,\y+3*\dy);
    \node () at (\x+2.5*\dx,\y+3*\dy) [label={[label distance = -2mm]above:\small{1}}] {};
    
    \draw[-] (\x+1*\dx,\y+0*\dy) -- (\x+1*\dx,\y+2*\dy);
    \node () at (\x+1*\dx,\y+0*\dy) [leg] {.};
    \draw[-] (\x+4*\dx,\y+0*\dy) -- (\x+4*\dx,\y+2*\dy);
    \node () at (\x+4*\dx,\y+0*\dy) [leg] {.};
    \draw[-] (\x+1*\dx,\y+2*\dy) -- (\x+4*\dx,\y+2*\dy);
    \node () at (\x+2.5*\dx,\y+2*\dy) [label={[label distance = -2mm]above:\small{2}}] {};
    
    \draw[-] (\x+2*\dx,\y+0*\dy) -- (\x+2*\dx,\y+1*\dy);
    \node () at (\x+2*\dx,\y+0*\dy) [leg] {.};
    \draw[-] (\x+3*\dx,\y+0*\dy) -- (\x+3*\dx,\y+1*\dy);
    \node () at (\x+3*\dx,\y+0*\dy) [leg] {.};
    \draw[-] (\x+2*\dx,\y+1*\dy) -- (\x+3*\dx,\y+1*\dy);
    \node () at (\x+2.5*\dx,\y+1*\dy) [label={[label distance = -2mm]above:\small{3}}] {};

    \pgfmathsetmacro {\x}{13*\dx}
    \pgfmathsetmacro {\y}{0}
\draw[-] (\x+0*\dx,\y+0*\dy) -- (\x+0*\dx,\y+3*\dy);
    \node () at (\x+0*\dx,\y+0*\dy) [leg] {.};
    \draw[-] (\x+5*\dx,\y+0*\dy) -- (\x+5*\dx,\y+3*\dy);
    \node () at (\x+5*\dx,\y+0*\dy) [leg] {.};
    \draw[-] (\x+0*\dx,\y+3*\dy) -- (\x+5*\dx,\y+3*\dy);
    \node () at (\x+2.5*\dx,\y+3*\dy) [label={[label distance = -2mm]above:\small{3}}] {};
    
    \draw[-] (\x+1*\dx,\y+0*\dy) -- (\x+1*\dx,\y+2*\dy);
    \node () at (\x+1*\dx,\y+0*\dy) [leg] {.};
    \draw[-] (\x+4*\dx,\y+0*\dy) -- (\x+4*\dx,\y+2*\dy);
    \node () at (\x+4*\dx,\y+0*\dy) [leg] {.};
    \draw[-] (\x+1*\dx,\y+2*\dy) -- (\x+4*\dx,\y+2*\dy);
    \node () at (\x+2.5*\dx,\y+2*\dy) [label={[label distance = -2mm]above:\small{2}}] {};
    
    \draw[-] (\x+2*\dx,\y+0*\dy) -- (\x+2*\dx,\y+1*\dy);
    \node () at (\x+2*\dx,\y+0*\dy) [leg] {.};
    \draw[-] (\x+3*\dx,\y+0*\dy) -- (\x+3*\dx,\y+1*\dy);
    \node () at (\x+3*\dx,\y+0*\dy) [leg] {.};
    \draw[-] (\x+2*\dx,\y+1*\dy) -- (\x+3*\dx,\y+1*\dy);
    \node () at (\x+2.5*\dx,\y+1*\dy) [label={[label distance = -2mm]above:\small{1}}] {};

    \pgfmathsetmacro {\x}{26*\dx}
    \pgfmathsetmacro {\y}{0}

    \draw[-] (\x+0*\dx,\y+0*\dy) -- (\x+0*\dx,\y+3*\dy);
    \node () at (\x+0*\dx,\y+0*\dy) [leg] {.};
    \draw[-] (\x+5*\dx,\y+0*\dy) -- (\x+5*\dx,\y+3*\dy);
    \node () at (\x+5*\dx,\y+0*\dy) [leg] {.};
    \draw[-] (\x+0*\dx,\y+3*\dy) -- (\x+5*\dx,\y+3*\dy);
    \node () at (\x+2.5*\dx,\y+3*\dy) [label={[label distance = -2mm]above:\small{2}}] {};
    
    \draw[-] (\x+1*\dx,\y+0*\dy) -- (\x+1*\dx,\y+2*\dy);
    \node () at (\x+1*\dx,\y+0*\dy) [leg] {.};
    \draw[-] (\x+4*\dx,\y+0*\dy) -- (\x+4*\dx,\y+2*\dy);
    \node () at (\x+4*\dx,\y+0*\dy) [leg] {.};
    \draw[-] (\x+1*\dx,\y+2*\dy) -- (\x+4*\dx,\y+2*\dy);
    \node () at (\x+2.5*\dx,\y+2*\dy) [label={[label distance = -2mm]above:\small{1}}] {};
    
    \draw[-] (\x+2*\dx,\y+0*\dy) -- (\x+2*\dx,\y+1*\dy);
    \node () at (\x+2*\dx,\y+0*\dy) [leg] {.};
    \draw[-] (\x+3*\dx,\y+0*\dy) -- (\x+3*\dx,\y+1*\dy);
    \node () at (\x+3*\dx,\y+0*\dy) [leg] {.};
    \draw[-] (\x+2*\dx,\y+1*\dy) -- (\x+3*\dx,\y+1*\dy);
    \node () at (\x+2.5*\dx,\y+1*\dy) [label={[label distance = -2mm]above:\small{3}}] {};
    
    \pgfmathsetmacro {\x}{0}
    \pgfmathsetmacro {\y}{-6*\dy}

    \draw[-] (\x+0*\dx,\y+0*\dy) -- (\x+0*\dx,\y+3*\dy);
    \node () at (\x+0*\dx,\y+0*\dy) [leg] {.};
    \draw[-] (\x+5*\dx,\y+0*\dy) -- (\x+5*\dx,\y+3*\dy);
    \node () at (\x+5*\dx,\y+0*\dy) [leg] {.};
    \draw[-] (\x+0*\dx,\y+3*\dy) -- (\x+5*\dx,\y+3*\dy);
    \node () at (\x+2.5*\dx,\y+3*\dy) [label={[label distance = -2mm]above:\small{3}}] {};
    
    \draw[-] (\x+1*\dx,\y+0*\dy) -- (\x+1*\dx,\y+2*\dy);
    \node () at (\x+1*\dx,\y+0*\dy) [leg] {.};
    \draw[-] (\x+4*\dx,\y+0*\dy) -- (\x+4*\dx,\y+2*\dy);
    \node () at (\x+4*\dx,\y+0*\dy) [leg] {.};
    \draw[-] (\x+1*\dx,\y+2*\dy) -- (\x+4*\dx,\y+2*\dy);
    \node () at (\x+2.5*\dx,\y+2*\dy) [label={[label distance = -2mm]above:\small{2}}] {};
    
    \draw[-] (\x+2*\dx,\y+0*\dy) -- (\x+2*\dx,\y+1*\dy);
    \node () at (\x+2*\dx,\y+0*\dy) [leg] {.};
    \draw[-] (\x+3*\dx,\y+0*\dy) -- (\x+3*\dx,\y+1*\dy);
    \node () at (\x+3*\dx,\y+0*\dy) [leg] {.};
    \draw[-] (\x+2*\dx,\y+1*\dy) -- (\x+3*\dx,\y+1*\dy);
    \node () at (\x+2.5*\dx,\y+1*\dy) [label={[label distance = -2mm]above:\small{1}}] {};
    
    \pgfmathsetmacro {\x}{13*\dx}
    \pgfmathsetmacro {\y}{-6*\dy}
    \draw[-] (\x+0*\dx,\y+0*\dy) -- (\x+0*\dx,\y+3*\dy);
    \node () at (\x+0*\dx,\y+0*\dy) [leg] {.};
    \draw[-] (\x+5*\dx,\y+0*\dy) -- (\x+5*\dx,\y+3*\dy);
    \node () at (\x+5*\dx,\y+0*\dy) [leg] {.};
    \draw[-] (\x+0*\dx,\y+3*\dy) -- (\x+5*\dx,\y+3*\dy);
    \node () at (\x+2.5*\dx,\y+3*\dy) [label={[label distance = -2mm]above:\small{1}}] {};
    
    \draw[-] (\x+1*\dx,\y+0*\dy) -- (\x+1*\dx,\y+2*\dy);
    \node () at (\x+1*\dx,\y+0*\dy) [leg] {.};
    \draw[-] (\x+4*\dx,\y+0*\dy) -- (\x+4*\dx,\y+2*\dy);
    \node () at (\x+4*\dx,\y+0*\dy) [leg] {.};
    \draw[-] (\x+1*\dx,\y+2*\dy) -- (\x+4*\dx,\y+2*\dy);
    \node () at (\x+2.5*\dx,\y+2*\dy) [label={[label distance = -2mm]above:\small{3}}] {};
    
    \draw[-] (\x+2*\dx,\y+0*\dy) -- (\x+2*\dx,\y+1*\dy);
    \node () at (\x+2*\dx,\y+0*\dy) [leg] {.};
    \draw[-] (\x+3*\dx,\y+0*\dy) -- (\x+3*\dx,\y+1*\dy);
    \node () at (\x+3*\dx,\y+0*\dy) [leg] {.};
    \draw[-] (\x+2*\dx,\y+1*\dy) -- (\x+3*\dx,\y+1*\dy);
    \node () at (\x+2.5*\dx,\y+1*\dy) [label={[label distance = -2mm]above:\small{2}}] {};
    \pgfmathsetmacro {\x}{26*\dx}
    \pgfmathsetmacro {\y}{-6*\dy}

    \draw[-] (\x+0*\dx,\y+0*\dy) -- (\x+0*\dx,\y+3*\dy);
    \node () at (\x+0*\dx,\y+0*\dy) [leg] {.};
    \draw[-] (\x+5*\dx,\y+0*\dy) -- (\x+5*\dx,\y+3*\dy);
    \node () at (\x+5*\dx,\y+0*\dy) [leg] {.};
    \draw[-] (\x+0*\dx,\y+3*\dy) -- (\x+5*\dx,\y+3*\dy);
    \node () at (\x+2.5*\dx,\y+3*\dy) [label={[label distance = -2mm]above:\small{2}}] {};
    
    \draw[-] (\x+1*\dx,\y+0*\dy) -- (\x+1*\dx,\y+2*\dy);
    \node () at (\x+1*\dx,\y+0*\dy) [leg] {.};
    \draw[-] (\x+4*\dx,\y+0*\dy) -- (\x+4*\dx,\y+2*\dy);
    \node () at (\x+4*\dx,\y+0*\dy) [leg] {.};
    \draw[-] (\x+1*\dx,\y+2*\dy) -- (\x+4*\dx,\y+2*\dy);
    \node () at (\x+2.5*\dx,\y+2*\dy) [label={[label distance = -2mm]above:\small{3}}] {};
    
    \draw[-] (\x+2*\dx,\y+0*\dy) -- (\x+2*\dx,\y+1*\dy);
    \node () at (\x+2*\dx,\y+0*\dy) [leg] {.};
    \draw[-] (\x+3*\dx,\y+0*\dy) -- (\x+3*\dx,\y+1*\dy);
    \node () at (\x+3*\dx,\y+0*\dy) [leg] {.};
    \draw[-] (\x+2*\dx,\y+1*\dy) -- (\x+3*\dx,\y+1*\dy);
    \node () at (\x+2.5*\dx,\y+1*\dy) [label={[label distance = -2mm]above:\small{1}}] {};
\end{tikzpicture}
    \caption{Ordered noncrossing partitions.}
    \label{picture:OV_two-plus}
\end{figure}
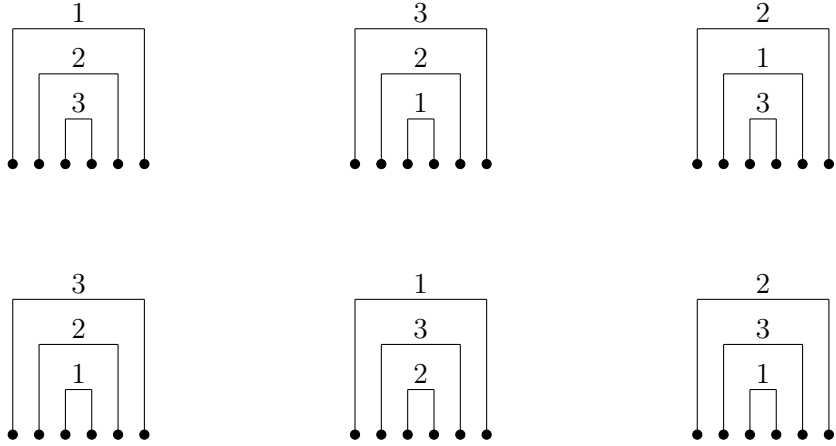
\end{example}

We are now in a position to state the main theorem of this section.
\begin{theorem}
\label{thm:limitMomentFormula}
For $n \in \NN$, the $n$th limit moment of $S_{N}(\lambda)$ with respect to $\varphi$ is given by
\begin{equation}
\label{eq:limitMomentFormula}
\momentLimit{n}{\lambda} = \sum_{k=0}^{\lfloor \lfrac{n}{2} \rfloor} \frac{ \card{ \vNoncrk{2+}(n, k) } }{k!} \cdot \lambda^{n-2k} \text{,}
\end{equation}
where $\lambda \in \RR$.
\end{theorem}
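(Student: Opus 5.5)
The plan is to start from the exact formula of Theorem~\ref{thm:exactMomentFormula} and pass to the limit termwise. Since $n$ is fixed, the sum over $k$ in \eqref{eq:exactMomentFormula} is finite. It therefore suffices to show that for each fixed $\pi \in \noncrk{2+}(n, k)$ the number $c_N(\pi)$ of labelings $\mathrm{L}\colon \pi \to [N]$ making $(\pi,\mathrm{L})$ V-monotonically labeled satisfies
\begin{equation*}
\lim_{N\to\infty} \frac{c_N(\pi)}{N^k} = \frac{v(\pi)}{k!},
\end{equation*}
where $v(\pi)$ is the number of ordered partitions $\sigma$ with $\widehat{\sigma} = \pi$ that belong to $\vNoncrk{2+}(n,k)$. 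Summing over $\pi \in \noncrk{2+}(n,k)$ then gives $\card{\vlNoncrk{2+}{N}(n,k)}/N^k \to \card{\vNoncrk{2+}(n,k)}/k!$, and hence \eqref{eq:limitMomentFormula}.

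To prove the limit for fixed $\pi$, I split the labelings according to whether $\mathrm{L}$ is injective.

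The noninjective labelings contribute at most $\binom{k}{2} N^{k-1}$, since one pair of blocks must share a label and the remaining labels are free. This contribution is $o(N^k)$.

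Injective labelings $\mathrm{L}\colon \pi \to [N]$ correspond bijectively to pairs consisting of a $k$-element subset $S\subseteq[N]$ and a bijection $\pi \to [k]$, namely $\mathrm{L} = \iota_S \circ \mathrm{L}'$ with $\iota_S\colon [k]\to S$ the order-preserving bijection. The key observation is that V-monotonicity of a word depends only on the relative order of its letters, i.e.\ on comparisons $i_a > i_{a+1}$ or $i_a < i_{a+1}$. Composing with a strictly increasing map therefore preserves membership in $\vWords{}{I}$, so $(\pi,\mathrm{L})$ is V-monotonically labeled if and only if $(\pi,\mathrm{L}')$ is. Such $\mathrm{L}'$ are exactly the ordered partitions in $\vNoncrk{2+}(n,k)$ with underlying partition $\pi$. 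Hence the injective V-monotone labelings number $\binom{N}{k}\, v(\pi)$, and $\binom{N}{k}/N^k \to 1/k!$.

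The main point needing care is the relative-order invariance for injective labelings. Along a maximal chain the labels are pairwise distinct, so no equalities occur, and the defining conditions $i_1>\cdots>i_m<\cdots<i_n$ are plainly preserved by order isomorphisms. One should also note that labels on non-comparable blocks may coincide in noninjective labelings. This is harmless, because those labelings are discarded in the error term, which is why the ordered-partition count excludes repeated labels.

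As a sanity check, Examples~\ref{example:VL_two-plus} and~\ref{example:OV_two-plus} confirm the leading coefficients $1,9,14/3 = 1/1!,18/2!,28/3!$.
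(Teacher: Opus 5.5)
Your proposal is correct and follows essentially the same route as the paper. Both reduce, via the exact formula, to the per-partition limit $\lim_{N\to\infty}\card{\vlNoncr{N}(\pi)}/N^{\card{\pi}}=\card{\vNoncr(\pi)}/\card{\pi}!$, discard non-injective labelings as $o(N^{k})$, and identify the injective V-monotone labelings with $\binom{N}{k}$ copies of the V-monotone ordered partitions through the relative order of the labels. Your explicit bound $\binom{k}{2}N^{k-1}$ and your remark that V-monotonicity is invariant under order isomorphisms make precise two steps that the paper only asserts.
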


\begin{proof}
Fix $n \in \NN$, $\lambda \in \RR$ and $N \in \NNp$. For a partition $\pi \in \noncrk{2+}(n)$, we denote by $\vlNoncr{N}(\pi)$ and $\vNoncr(\pi)$ the set of all $(\pi', \mathrm{L}) \in \vlNoncrk{2+}{N}(n)$ such that $\pi' = \pi$ and the set of all $\pi_0 \in \vNoncrk{2+}(n)$ such that $\widehat{\pi}_0 = \pi$, respectively. We rewrite~\eqref{eq:exactMomentFormula} as
\begin{equation}
\label{eq:exactMomentFormula_pi}
\momentExact{n}{\lambda}{N} = \sum_{ \pi \in \noncrk{2+}(n) } \frac{\card{ \vlNoncr{N}(\pi) }}{N^{\card{\pi}}} \cdot \lambda^{n - 2 \card{\pi}} \text{.}
\end{equation}
It suffices to prove that for each $\pi \in \noncrk{2+}$, we have
\begin{equation}
\label{equation:vlNoncr_asymptotics}
\lim_{N \to \infty} \frac{\card{ \vlNoncr{N}(\pi) }}{N^{\card{\pi}}} = \frac{\card{ \vNoncr(\pi) }}{\card{\pi}!} \text{.}
\end{equation}
If $\mathrm{L} \colon \pi \to [N]$ is not injective, then it has at most $\card{\pi}-1$ different values and therefore there are at most $\smallO{N^{\card{\pi}}}$ non-injective labelings.

Otherwise, there exists a unique ordered partition $\pi_0 \in \oNoncrk{2+}(n)$ with the same blocks as $\pi$, such that the block $B \in \pi$ is on $r$th position in $\pi_0$ if and only if $\mathrm{L}(B)$ is the $r$th smallest number in the range of $\mathrm{L}$, for some $r \in [\card{\pi}]$. Since $(\pi, \mathrm{L}) \in \vlNoncrk{2+}{N}(n)$ if and only if $\pi_0 \in \vNoncrk{2+}(n)$, there are exactly $\card{ \vNoncr(\pi) } \cdot {N \choose \card{\pi}}$ injective V-monotone labelings $L \colon \pi \to [N]$. Therefore, we have
\begin{equation*}
\card{\vlNoncr{N}(\pi)} = \card{ \vNoncr(\pi) } \cdot {N \choose \card{\pi}} + \smallO{N^{\card{\pi}}} \text{,}
\end{equation*}
which leads to
\begin{equation}
\label{eq:limitMomentFormula_pi1}
\momentLimit{n}{\lambda} = \sum_{ \pi \in \noncrk{2+}(n) } \frac{\card{ \vNoncr(\pi) }}{\card{\pi}!} \cdot \lambda^{n - 2 \card{\pi}} \text{,}
\end{equation}
and, consequently,~\eqref{equation:vlNoncr_asymptotics} holds.
\end{proof}

\begin{example}
For illustration, we present the lowest-order moment of V-monotone Poisson distribution compared with their monotone~\cite{Mur0}, free~\cite{LO2023} and some bm~\cite{LOJW2024} counterparts. In all cases, we have $m_1(\lambda) = 0$, $m_2(\lambda) = 1$ and $m_3(\lambda) = \lambda$. Table~\ref{table1} contains the remaining moments of orders~up~to~$6$.

\begin{table}[H]
\begin{center}
\renewcommand{\arraystretch}{1.5}
\begin{tabular}{ | m{3em} | m{2cm} | m{2cm} | m{2cm} | m{2.5cm} | m{2.5cm} | }
  \hline
  & free & monotone & bm $(\mathbb{R}_{+}^{2})$ & bm $(\mathbb{R}_{+}^{3})$ & V-monotone \\

  \hline
  $\momentLimit{4}{\lambda}$ & $\lambda^2 + 2$ & $\lambda^2 + \frac{3}{2}$ &  $\lambda^2 + \frac{5}{4}$ & $\lambda^2 + \frac{9}{8}$ & $\lambda^2 + 2$ \\
  \hline
  $\momentLimit{5}{\lambda}$ & $\lambda^3 + 5 \lambda$ & $\lambda^3 + \frac{7}{2} \lambda$ & $\lambda^3 + \frac{11}{4} \lambda$ & $\lambda^3 + \frac{19}{8} \lambda$ & $\lambda^3 + 5 \lambda$ \\
  \hline
  $\momentLimit{6}{\lambda}$ & $\lambda^4+9\lambda^2+5$ & $\lambda^4+6\lambda^2+\frac{5}{2}$ & $\lambda^4+\frac{9}{2}\lambda^2+\frac{59}{36}$ & $\lambda^4+\frac{15}{4}\lambda^2+\frac{31}{24}$ & $\lambda^4+9\lambda^2+\frac{14}{3}$ \\
  \hline
\end{tabular}
\caption{The moments of orders $4$, $5$ and $6$ for V-monotone, free, monotone and bm (for the cones $\mathbb{R}^2_+$ and $\mathbb{R}^3_+$) Poisson distributions.}
\label{table1}
\renewcommand{\arraystretch}{1}
\end{center}
\end{table}
\end{example}

\section{Moment generating function}
\label{section:MGF}
In this section, we derive the moment generating functions of the exact and limit distributions of $S_{N}(\lambda)$, for $\lambda \in \RR$, that is,
\begin{equation*}
\mgfExact{\lambda}{N}(z) = \sum_{n=0}^{\infty} \momentExact{n}{\lambda}{N} z^n \text{, } N \in \NNp \quad \text{and} \quad \mgfLimit{\lambda}(z) = \sum_{n=0}^{\infty}\momentLimit{n}{\lambda} z^n \text{, } N \to \infty \text{.}
\end{equation*}

We start with $S_{N}(\lambda)$ for fixed $N \in \NNp$. The moment generating function is given recursively.

\def\cardV#1#2#3{ c_{#1, #2}(#3) }
\def\cardM#1#2{ \cardV{#1}{0}{#2} }
\def\mgfV#1#2#3#4#5{ P^{#5}_{#1, #2}(#3, #4) }
\def\mgfM#1#2#3#4{ \mgfV{#1}{0}{#2}{#3}{#4} }

\begin{theorem}
\label{thm:MGFexact}
For $N \in \NNp$ and $\lambda \in \RR$, the moment generating function of the operator $S_N(\lambda)$ has the form
\begin{equation}
\label{eq:MGFexact}
\mgfExact{\lambda}{N}(z) = \mgfV{N}{N+1}{z}{\lambda}{} \text{,} \quad z \in \CC \text{, $\lvert z \rvert$ small enough,}
\end{equation}

where $\left( P_{N, l} : N \in \NN \text{, } 0 \leq l \leq N+1 \right)$ is a family of functions
\begin{equation*}
P_{N, l} \colon \left\{ (z, \lambda) \in \CC^2 : \lvert z \rvert < \min \left( \frac{1}{4}, \frac{1}{4 \lvert \lambda \rvert} \right) \right\} \to \CC \text{}
\end{equation*}
satisfying the following recurrence
\begin{equation}
\label{eq:mgfExactRecurrence}
\begin{split}
& \frac{1}{\mgfV{N}{l}{z}{\lambda}{}} = \\
& \begin{cases}
1 - \dfrac{z^2}{N} \cdot \left( \dsum \limits_{i=1}^{l-1} \mgfV{N}{i}{z}{0}{} + \dsum \limits_{i=l+1}^N\mgfM{N-i}{z}{0}{} \right) & \text{for $\lambda = 0$,} \\
1 - \dfrac{z}{\lambda N} \cdot \left( \dsum \limits_{i=1}^{l-1} \left( \dfrac{1}{1 - \lambda z \mgfV{N}{i}{z}{\lambda}{}} - 1 \right) + \dsum \limits_{i=l+1}^N \left( \dfrac{1}{1 - \lambda z \mgfM{N-i}{z}{\lambda}{}} - 1 \right) \right) & \text{for $\lambda \neq 0$,}
\end{cases}
\end{split}
\end{equation}
for $N \in \NN$ and $l \in [0, N+1]$. We adopt the convention that, for $l \in \{ 0, 1, N, N+1 \}$, the empty sums equals $0$, which means that $P_{0, l}$ is constant and equals $1$.
\end{theorem}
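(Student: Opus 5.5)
We prove the theorem by interpreting $P_{N,l}$ combinatorially and splitting off the block containing $1$, as in the proof of Proposition~\ref{proposition:bijRiordan}. By Theorem~\ref{thm:exactMomentFormula},
\begin{equation*}
\mgfExact{\lambda}{N}(z)=\sum_{n\ge 0}\ \sum_{(\pi,\mathrm{L})\in\vlNoncrk{2+}{N}(n)} z^{n}\,\frac{\lambda^{\ml{\pi}}}{N^{\card{\pi}}}.
\end{equation*}
For $0\le l\le N+1$ we define $P_{N,l}(z,\lambda)$ by the same series, restricted to labeled partitions $(\pi,\mathrm{L})$ with $\mathrm{L}\colon\pi\to[N]$ satisfying the following condition. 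For every maximal chain $B_1\prec_\pi\cdots\prec_\pi B_r$, the sequence $(l,\mathrm{L}(B_1),\ldots,\mathrm{L}(B_r))$, read from the outside inwards, first strictly decreases and then strictly increases. When $l\in[1,N]$, the descent is counted as having started at $l$; when $l=N+1$, any first label is allowed. When $l=0$ (the \emph{increasing phase}), all labels along a chain must strictly increase.

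First we check that this is the right constraint. A chain word in $\partTree{(\pi,\mathrm{L})}$ is $(\mathrm{L}(B_r),\ldots,\mathrm{L}(B_1))$, innermost first. Being V-shaped means exactly that, read from the outermost block inwards, the labels strictly decrease and then strictly increase. Hence $\mgfExact{\lambda}{N}=P_{N,N+1}$. Also $P_{0,l}=1$, since only the empty partition carries a labeling into $[0]=\emptyset$.

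Next comes the key observation. Whether an inner block may be attached depends only on the \emph{state}: the label $l$ of the nearest outer block, together with the phase. Suppose we are in the decreasing phase with state $l$ and a new block gets label $i\ne l$ (equal labels are never allowed in a V-word). If $i<l$, we stay in the decreasing phase with state $i$, which gives the factor $P_{N,i}$. If $i>l$, we switch to the increasing phase, where all deeper labels must lie in $[i+1,N]$. Shifting labels by $-i$ identifies this class weight-preservingly with the $l=0$ class over the alphabet $[N-i]$. The factor $N^{-\card{\pi}}$ does not shift: it stays $N^{-1}$ per block. So the structural recursion is really in terms of $\widetilde P_{N-i}$, the class over $[N-i]$ with weight $N^{-1}$ per block; I will check that this matches the paper's $\mgfM{N-i}{z}{\lambda}{}$ as indexed in \eqref{eq:mgfExactRecurrence}. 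Since $0\le l\le N+1$, this yields exactly the two index ranges $i\in[1,l-1]$ and $i\in[l+1,N]$ appearing in \eqref{eq:mgfExactRecurrence}. For $l=0$ the first sum is empty; for $l=N+1$ the second sum is empty.

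Now decompose a nonempty $\pi$ as $\ncPartitionRecurrenceD{\pi'_1}{\cdots}{\pi'_p}{\pi''}$, where the first block $B=\{1=l_0<\cdots<l_p\}$ has $p-1$ middle legs and label $i$. The remainder $\pi''$ lies at the same depth as $B$, so it contributes $P_{N,l}$. Each of the $p$ gaps $\pi'_q$ lies inside $B$ and contributes $Q_i$, where $Q_i=P_{N,i}$ for $i<l$ and $Q_i=\widetilde P_{N-i}$ for $i>l$. The block $B$ itself contributes $z^{p+1}\lambda^{p-1}/N$. Summing over $p\ge1$ gives
\begin{equation*}
P_{N,l}=1+P_{N,l}\sum_{i}\frac{z^{2}Q_i}{N(1-\lambda zQ_i)}.
\end{equation*}
Since $\frac{z^2Q}{N(1-\lambda zQ)}=\frac{z}{\lambda N}\bigl(\frac{1}{1-\lambda zQ}-1\bigr)$ for $\lambda\neq0$, and this equals $z^2Q/N$ for $\lambda=0$, solving for $1/P_{N,l}$ gives both cases of \eqref{eq:mgfExactRecurrence}.

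Finally, analyticity on the stated domain. By Proposition~\ref{proposition:bijRiordan}, $\card{\noncrk{2+}(n)}=\card{\riordan{n}}\le 3^n$. The number of labelings of a partition with $k$ blocks is at most $N^k$, which cancels the factor $N^{-k}$. The power $\lambda^{n-2k}$ is bounded by $\max(1,\lvert\lambda\rvert)^n$. Hence every series converges absolutely for $\lvert z\rvert<\min(1/4,1/(4\lvert\lambda\rvert))$, and on this domain the formal identities become identities of analytic functions. Also $\lvert\lambda zQ_i\rvert<1$ there, so the geometric sums make sense; this needs a crude bound on $\lvert Q_i\rvert$, which I will obtain from the coefficient estimate.

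I expect the main obstacle to be the state-reduction step. It requires a careful proof that the V-condition on all maximal chains is exactly captured by the pair (phase, last label). This includes the treatment of equal labels, and of the boundary states $l=0$ and $l=N+1$ as encoding \emph{increasing phase} and \emph{vacuum, anything allowed} respectively. I also have to verify that the relabeling $i'\mapsto i'-i$ preserves the weights, including the $N^{-\card{\pi}}$ normalization, in the way \eqref{eq:mgfExactRecurrence} requires.
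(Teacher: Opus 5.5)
Your route is the paper's. The paper also encodes the state by an imaginary outer block labeled $l$. It splits off the block containing $1$ with the cases $i<l$, $i=l$ (impossible) and $i>l$; in the last case the inner part is monotone in $[i+1,N]$, shifted to $[N-i]$. It then sums the geometric series in $p$. Your $3^n$ bound is more explicit than the paper on the analytic side. It gives $\lvert Q_i\rvert\le(1-3t)^{-1}$, hence $\lvert\lambda zQ_i\rvert\le t/(1-3t)<1$ for $t=\lvert z\rvert\max(1,\lvert\lambda\rvert)<\tfrac14$.

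The step you postpone, however, cannot be carried out, and not through any fault of yours. Your $\widetilde P_{N-i}$ carries weight $N^{-1}$ per block. By contrast, $P_{N-i,0}$ carries $(N-i)^{-1}$ per block, both in the series definition \eqref{eq:mgfV} and in the recurrence itself, whose prefactor at level $N-i$ is $1/(N-i)$. The paper's proof makes exactly this identification silently, when it rewrites $\prod_{q}\sum_{\pi'_q}c_{N-i,0}(\pi'_q)N^{-\card{\pi'_q}}\cdots$ as $P_{N-i,0}^{\,p}$. By homogeneity, $\widetilde P_M(z,\lambda)$ is the series \eqref{eq:mgfV} for $P_{M,0}$ evaluated at $(z\sqrt{M/N},\lambda\sqrt{N/M})$. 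This is a different function when $1\le M<N$.

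Consequently the recurrence as printed fails for every $N\ge 3$. For $N=3$ and $\lambda=0$ it yields $P_{1,0}=(1-z^2)^{-1}$ and
\begin{equation*}
P_{3,1}=1+\tfrac23z^2+\tfrac79z^4+\cdots,\qquad P_{3,2}=1+\tfrac23z^2+\tfrac23z^4+\cdots,\qquad P_{3,3}=1+\tfrac23z^2+\tfrac89z^4+\cdots.
\end{equation*}
So the $z^6$-coefficient of $P_{3,4}$ is $\tfrac{28}{9}=\tfrac{84}{27}$. On the other hand, the paper's own Example gives $m_{6}(0,3)=\tfrac{14}{3}-\tfrac{11}{6}+\tfrac{11}{54}=\tfrac{82}{27}$. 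A direct count confirms this: there are $27+18+18+12+7=82$ V-monotone labelings by $[3]$ of the five partitions in $\noncrk{2}(6)$.

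Using your $\widetilde P_1=(1-z^2/3)^{-1}$ instead, the coefficient $\tfrac79$ becomes $\tfrac59$ and the result is the correct $\tfrac{82}{27}$. In general the two versions differ in $m_{6}(0,N)$ by $\tfrac{(N-1)(N-2)}{3N^2}$. For $N\le 2$ only $P_{0,0}=1$ enters the second sum, so the printed statement is correct there.

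So do not try to verify the match. Instead, state the theorem with $P_{N-i,0}$ replaced by your $\widetilde P_{N-i}$. The cleanest way is to carry the normalization as a separate index: let $P^{(N)}_{M,l}$ use labels in $[M]$ with weight $N^{-1}$ per block. With the fixed prefactor $1/N$ and $P^{(N)}_{N-i,0}$ in the second sum, the recurrence closes and gives $M_{\lambda,N}=P^{(N)}_{N,N+1}$. For that corrected statement your argument is a complete proof.
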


\begin{proof}
Combining Theorem~\ref{thm:exactMomentFormula} and~\eqref{eq:exactMomentFormula_pi}, we have
\begin{equation}
\label{eq:MGFexact_pi}
\mgfExact{\lambda}{N}(z) = \sum_{\pi \in \mathcal{NC}^{2+}} \frac{\card{ \vlNoncr{N}(\pi) }}{N^{\card{\pi}}} \lambda^{\legs{\pi} - 2 \card{\pi}} z^{\legs{\pi}} \text{,}
\end{equation}

where $\legs{\pi}$ is the number of all legs in $\pi$ and the set $\vlNoncr{N}(\pi)$ was defined in the proof of Theorem~\ref{thm:exactMomentFormula}. We now must show that
\begin{equation}
\label{eq:mgfV}
\mgfV{N}{l}{z}{\lambda}{} \coloneqq \sum_{\pi \in \mathcal{NC}^{2+}} \frac{\cardV{N}{l}{\pi}}{N^{\card{\pi}}} \lambda^{\legs{\pi} - 2 \card{\pi}} z^{\legs{\pi}} \text{}
\end{equation}
is the solution of~\eqref{eq:mgfExactRecurrence}, where for a noncrossing partition $\pi \in \noncr$, we denote by $\cardV{N}{l}{\pi}$, $N, l \in \NN$, the number of all V-monotone labelings $\mathrm{L} \colon \pi \to [N]$, such that the partition remains V-monotonically labeled if we add the ``imaginary block'' $\{0, \legs{\pi}+1\}$ with the label $l$ to the labeled partition $(\pi, \mathrm{L})$. Note that $\cardM{N}{\pi}$ is the number of all monotone labelings of $\pi$, i.e., $\cardV{N}{0}{\pi}$ equals the cardinality of
\begin{equation*}
\left\{ \mathrm{L} \colon \pi \to [N] : \mathrm{L}(B') < \mathrm{L}(B) \text{ whenever $B' \prec_{\pi} B$, for all $B, B' \in \pi$} \right\} \text{.}
\end{equation*}
Moreover, $\cardV{N}{l}{\pi} = \card{ \vlNoncr{N}(\pi) }$ if $l > N$, which leads to~\eqref{eq:MGFexact}. Let us first show that the radius of convergence of~\eqref{eq:mgfV} with respect to $z$ satisfies
\begin{equation}
\label{eq:radiusOfConvergenceExact}
R(\lambda) \geq \min \left( \frac{1}{4}, \frac{1}{4 \lvert \lambda \rvert} \right) \text{.}
\end{equation}
Indeed,
\begin{equation*}
\left \lvert \sum_{\pi \in \noncrk{2+}(n)} \frac{ \cardV{N}{l}{\pi} }{N^{\card{\pi}}} \lambda^{n - 2 \card{\pi}} \right \rvert \leq \max(1, \lvert \lambda \rvert^n) \cdot \left( \sum_{\pi \in \noncrk{2+}(n)} 1 \right) \leq \max(1, \lvert \lambda \rvert^n) \cdot C_n \text{,}
\end{equation*}
where $(C_n)_{n=0}^{\infty}$ is the sequence of Catalan numbers, whose radius of convergence is $\lfrac{1}{4}$. In order to show that~\eqref{eq:mgfExactRecurrence} is fulfilled, let us find a recurrence relation for the numbers $\cardV{N}{l}{\pi}$.

Obviously, $\cardV{N}{l}{\emptyset} = 1$. Let us prove that for $l \in [0, N+1]$ and $\pi = \ncPartitionRecurrenceD{\pi'_1}{\cdots}{\pi'_p}{\pi''}$, we have
\begin{equation*}
\cardV{N}{l}{\pi} = \left( \sum_{i=1}^{l-1} \prod_{q=1}^p \cardV{N}{i}{\pi'_q} + \sum_{i=l+1}^N \prod_{q=1}^p \cardM{N-i}{\pi'_q} \right) \cdot \cardV{N}{l}{\pi''} \text{}
\end{equation*}
with $p \in \NNp$ and $\pi'_1, \ldots, \pi'_q, \pi'' \in \noncrk{2+}$. Indeed, let us assign the label $i \in [N]$ to the block $B \ni 1$.
Consider three cases:
\begin{itemize}
	\item $i < l$: in this case, there are $\cardV{N}{i}{\pi'_q}$ labelings of $\pi'_q$, for each $q \in [p]$,
	\item $i = l$: this case is impossible,
	\item $i > l$: in this case, for all $q \in [p]$, $\pi'_q$ must be monotonically labeled. There are $\cardM{N-i}{\pi'_q}$ such labelings.
\end{itemize}

To prove~\eqref{eq:mgfExactRecurrence}, first note that
\begin{equation*}
\noncrk{2+} = \{ \emptyset \} \cup \bigcup_{p=1}^{\infty} \bigcup_{\pi_1', \pi_2', \ldots, \pi_p', \pi'' \in \noncrk{2+}} \left\{ \ncPartitionRecurrenceD{\pi'_1}{\cdots}{\pi'_p}{\pi''} \right\} \text{}
\end{equation*}
and, for $\pi = \ncPartitionRecurrenceD{\pi'_1}{\cdots}{\pi'_p}{\pi''} \in \noncrk{2+}(n, k)$, such that $\pi'_{q} \in \noncrk{2+}(n'_{q}, k'_{q})$ for $q \in [p]$ and $\pi'' \in \noncrk{2+}(n'', k'')$, we have
\begin{itemize}
    \item $n = n'_1 + \cdots + n'_p + (p+1) + n''$,
    \item $k = k'_1 + \cdots + k'_p + 1 + k''$,
\end{itemize}

Now, fix $N \in \NN$, $l \in [0, N+1]$ and a real $\lambda \neq 0$. For a complex $z$ from a neighborhood of $0$, we have
\begin{align*}
\mgfV{N}{l}{z}{\lambda}{}
= & \sum_{\pi \in \noncrk{2+}} \frac{ \cardV{N}{l}{\pi} }{N^k} z^n \lambda^{n - 2 k} \\
= & \, 1 + \frac{z}{\lambda N} \cdot \sum_{p=1}^{\infty} (\lambda z)^p \sum_{\pi'_1, \ldots, \pi'_p, \pi''} \bigg( \sum_{i=1}^{l-1} \prod_{q=1}^p \frac{ \cardV{N}{i}{\pi'_q} }{N^{k'_q}} z^{n'_q} \lambda^{n'_q - 2 k'_q} \\
& + \sum_{i=l+1}^N \prod_{q=1}^p \frac{ \cardM{N-i}{\pi'_q} }{N^{k'_q}} z^{n'_q} \lambda^{n'_q - 2 k'_q} \bigg) \cdot \frac{ \cardV{N}{l}{\pi''} }{N^{k''}} z^{n''} \lambda^{n'' - 2 k''} \\
= & \, 1 + \frac{z}{\lambda N} \cdot \sum_{p=1}^{\infty} (\lambda z)^p \bigg( \sum_{i=1}^{l-1} \prod_{q=1}^p \sum_{\pi'_q \in \noncrk{2+}} \frac{ \cardV{N}{i}{\pi'_q} }{N^{k'_q}} z^{n'_q} \lambda^{n'_q - 2 k'_q} \\
& + \sum_{i=l+1}^N \prod_{q=1}^p \sum_{\pi'_q \in \noncrk{2+}} \frac{ \cardM{N-i}{\pi'_q} }{N^{k'_q}} z^{n'_q} \lambda^{n'_q - 2 k'_q} \bigg) \bigg( \sum_{\pi'' \in \noncrk{2+}} \frac{ \cardV{N}{l}{\pi''} }{N^{k''}} z^{n''} \lambda^{n'' - 2 k''} \bigg) \\
= & \, 1 + \frac{z}{\lambda N} \cdot \sum_{p=1}^{\infty} (\lambda z)^p \bigg( \sum_{i=1}^{l-1} \mgfV{N}{i}{z}{\lambda}{p} + \sum_{i=l+1}^N \mgfM{N-i}{z}{\lambda}{p} \bigg) \mgfV{N}{l}{z}{\lambda}{} \\
= & \, 1 + \frac{z \mgfV{N}{l}{z}{\lambda}{}}{\lambda N} \bigg( \sum_{i=1}^{l-1} \Big( \frac{1}{1 - \lambda z \mgfV{N}{i}{z}{\lambda}{}} - 1 \Big) \\
& + \sum_{i=l+1}^N \Big( \frac{1}{1 - \lambda z \mgfM{N-i}{z}{\lambda}{}} - 1 \Big) \bigg) \text{,}
\end{align*}
which directly leads to~\eqref{eq:mgfExactRecurrence} for $\lambda \neq 0$. The proof for $\lambda = 0$ is similar and we leave it to the reader.
\end{proof}

\begin{remark}
The function $z \mapsto \mgfV{N}{0}{z}{\lambda}{}$, $N \in \NNp$, $\lambda \in \RR$, is the moment generating function of the monotone counterpart of $S_{N}(\lambda)$. Its exact distribution for $\lambda = 0$, up to constant $\sqrt{N}$, was studied in~\cite{CL2020}.
\end{remark}
\let\mgfM\undefined
\let\mgfV\undefined
\let\cardM\undefined
\let\cardV\undefined

\begin{example}
We have $\mgfExact{\lambda}{1}(z) = \dfrac{\lambda z - 1}{z^2 + \lambda z 
- 1}$ (this is the same as in the free and monotone cases). Using Python's \texttt{sympy} module, we calculate that $\mgfExact{\lambda}{2}(z)$ equals
\begin{center}
\scalebox{1.15}{$
\frac{z^{5} \left(8 \lambda^{5} - 12 \lambda^{3} + 4 \lambda\right) + z^{4} \left(- 40 \lambda^{4} + 36 \lambda^{2} - 4\right) + z^{3} \left(80 \lambda^{3} - 36 \lambda\right) + z^{2} \left(12 - 80 \lambda^{2}\right) + 40 \lambda z - 8}{z^{6} \left(8 \lambda^{4} - 8 \lambda^{2} + 1\right) + z^{5} \left(8 \lambda^{5} - 44 \lambda^{3} + 20 \lambda\right) + z^{4} \left(- 40 \lambda^{4} + 84 \lambda^{2} - 12\right) + z^{3} \left(80 \lambda^{3} - 68 \lambda\right) + z^{2} \left(20 - 80 \lambda^{2}\right) + 40 \lambda z - 8} \text{.}
$}

\end{center}
\end{example}

Now, we are ready to find the moment generating function for the limit measure of $S_{N}(\lambda)$ as $N \to \infty$. From now on, this limit measure will be denoted by $\mu_{\lambda}$. For that, we introduce two families of polynomials indexed by noncrossing partitions.
\begin{definition}
\label{GPQPolynom}
For a noncrossing partition $\pi \in \noncrk{2+}$, we define the polynomials $Q_{\pi} \colon [0, 1] \to \RR$ and $P_{\pi} \colon [0, 1] \to \RR$ recursively as follows:
\begin{equation*}
Q_{ \pi }(s) =
\begin{cases}
1 & \text{if $\pi = \emptyset$,} \\
\Big( \int_{s}^{1} Q_{\pi'_1}(t) \cdots Q_{\pi'_k}(t) \, dt\Big)Q_{\pi''}(s) & \text{if $\pi = \ncPartitionRecurrenceD{\pi'_1}{\ldots}{\pi'_k}{\pi''}$,}
\end{cases}
\end{equation*}
and
\begin{equation*}
P_{ \pi }(s) =
\begin{cases}
1 & \text{if $\pi = \emptyset$,} \\
\Big( \int_{0}^{s} P_{\pi'_1}(t) \cdots P_{\pi'_k}(t) \, dt + \int_{s}^{1} Q_{\pi'_1}(t) \cdots Q_{\pi'_k}(t) \, dt \Big) P_{\pi''}(s) & \text{if $\pi = \ncPartitionRecurrenceD{\pi'_1}{\ldots}{\pi'_k}{\pi''}$.}
\end{cases}
\end{equation*}
These polynomials are generalizations of those introduced in~\cite{AD2020}.
\end{definition}

\begin{remark}

For all $\pi \in \noncrk{2+}$, it follows that $Q_{\pi}(s)=Q_{\tilde{\pi}}(s)$ and $P_{\pi}(s)=P_{\tilde{\pi}}(s)$, where $\tilde{\pi} \in \noncrk{2}$ is obtained from $\pi$ by removing all middle legs.
\end{remark}

\begin{example}
Some examples of $P_{\pi}$ and $Q_{\pi}$ for the smallest noncrossing pair partitions are presented in Fig.~\ref{figure:pairPartitionPolynomialsOfTypePandQ}.
\begin{figure}
\centering



\begin{tikzpicture}
    \tikzstyle{leg} = [circle, draw=black, fill=black!100, text=black!100, thin, inner sep=0pt, minimum size=2.0]

    \pgfmathsetmacro {\dx}{0.25}
    \pgfmathsetmacro {\dy}{0.35}

    \pgfmathsetmacro {\x}{2*\dx}
    \pgfmathsetmacro {\y}{0*\dy}

    \draw[-] (\x+0*\dx,\y+0*\dy) -- (\x+0*\dx,\y+1*\dy);
    \node () at (\x+0*\dx,\y+0*\dy) [leg] {.};
    \draw[-] (\x+1*\dx,\y+0*\dy) -- (\x+1*\dx,\y+1*\dy);
    \node () at (\x+1*\dx,\y+0*\dy) [leg] {.};
    \draw[-] (\x+0*\dx,\y+1*\dy) -- (\x+1*\dx,\y+1*\dy);

    \node () at (\x+3.5*\dx,\y+0*\dy) [label={[label distance = -1.5mm]right:{$Q_{\pi} = (1-s)$}}] {};    
    \node () at (\x+3.5*\dx,\y+2*\dy) [label={[label distance = -1.5mm]right:{$P_{\pi} = 1$}}] {};

    \pgfmathsetmacro {\x}{19*\dx}
    \pgfmathsetmacro {\y}{0*\dy}

    \draw[-] (\x+0*\dx,\y+0*\dy) -- (\x+0*\dx,\y+1*\dy);
    \node () at (\x+0*\dx,\y+0*\dy) [leg] {.};
    \draw[-] (\x+1*\dx,\y+0*\dy) -- (\x+1*\dx,\y+1*\dy);
    \node () at (\x+1*\dx,\y+0*\dy) [leg] {.};
    \draw[-] (\x+0*\dx,\y+1*\dy) -- (\x+1*\dx,\y+1*\dy);
    
    \draw[-] (\x+2*\dx,\y+0*\dy) -- (\x+2*\dx,\y+1*\dy);
    \node () at (\x+2*\dx,\y+0*\dy) [leg] {.};
    \draw[-] (\x+3*\dx,\y+0*\dy) -- (\x+3*\dx,\y+1*\dy);
    \node () at (\x+3*\dx,\y+0*\dy) [leg] {.};
    \draw[-] (\x+2*\dx,\y+1*\dy) -- (\x+3*\dx,\y+1*\dy);

    \node () at (\x+4.5*\dx,\y+0*\dy) [label={[label distance = -1.5mm]right:{$Q_{\pi} = (1-s)^2$}}] {};    
    \node () at (\x+4.5*\dx,\y+2*\dy) [label={[label distance = -1.5mm]right:{$P_{\pi} = 1$}}] {};

    \pgfmathsetmacro {\x}{37*\dx}
    \pgfmathsetmacro {\y}{0*\dy}

    \draw[-] (\x+0*\dx,\y+0*\dy) -- (\x+0*\dx,\y+2*\dy);
    \node () at (\x+0*\dx,\y+0*\dy) [leg] {.};
    \draw[-] (\x+3*\dx,\y+0*\dy) -- (\x+3*\dx,\y+2*\dy);
    \node () at (\x+3*\dx,\y+0*\dy) [leg] {.};
    \draw[-] (\x+0*\dx,\y+2*\dy) -- (\x+3*\dx,\y+2*\dy);
    
    \draw[-] (\x+1*\dx,\y+0*\dy) -- (\x+1*\dx,\y+1*\dy);
    \node () at (\x+1*\dx,\y+0*\dy) [leg] {.};
    \draw[-] (\x+2*\dx,\y+0*\dy) -- (\x+2*\dx,\y+1*\dy);
    \node () at (\x+2*\dx,\y+0*\dy) [leg] {.};
    \draw[-] (\x+1*\dx,\y+1*\dy) -- (\x+2*\dx,\y+1*\dy);
    
    \node () at (\x+4.5*\dx,\y+0*\dy) [label={[label distance = -1.5mm]right:{$Q_{\pi} = \tfrac{(1-s)^2}{2}$}}] {};
    \node () at (\x+4.5*\dx,\y+2*\dy) [label={[label distance = -1.5mm]right:{$P_{\pi} = \tfrac{s^2+1}{2}$}}] {};

    \pgfmathsetmacro {\x}{0*\dx}
    \pgfmathsetmacro {\y}{-6*\dy}

    \draw[-] (\x+0*\dx,\y+0*\dy) -- (\x+0*\dx,\y+1*\dy);
    \node () at (\x+0*\dx,\y+0*\dy) [leg] {.};
    \draw[-] (\x+1*\dx,\y+0*\dy) -- (\x+1*\dx,\y+1*\dy);
    \node () at (\x+1*\dx,\y+0*\dy) [leg] {.};
    \draw[-] (\x+0*\dx,\y+1*\dy) -- (\x+1*\dx,\y+1*\dy);
    
    \draw[-] (\x+2*\dx,\y+0*\dy) -- (\x+2*\dx,\y+1*\dy);
    \node () at (\x+2*\dx,\y+0*\dy) [leg] {.};
    \draw[-] (\x+3*\dx,\y+0*\dy) -- (\x+3*\dx,\y+1*\dy);
    \node () at (\x+3*\dx,\y+0*\dy) [leg] {.};
    \draw[-] (\x+2*\dx,\y+1*\dy) -- (\x+3*\dx,\y+1*\dy);
    
    \draw[-] (\x+4*\dx,\y+0*\dy) -- (\x+4*\dx,\y+1*\dy);
    \node () at (\x+4*\dx,\y+0*\dy) [leg] {.};
    \draw[-] (\x+5*\dx,\y+0*\dy) -- (\x+5*\dx,\y+1*\dy);
    \node () at (\x+5*\dx,\y+0*\dy) [leg] {.};
    \draw[-] (\x+4*\dx,\y+1*\dy) -- (\x+5*\dx,\y+1*\dy);
    
    \node () at (\x+5.5*\dx,\y+0*\dy) [label={[label distance = -1.5mm]right:{$Q_{\pi} = (1-s)^3$}}] {};
    \node () at (\x+5.5*\dx,\y+2*\dy) [label={[label distance = -1.5mm]right:{$P_{\pi} = 1$}}] {};

    \pgfmathsetmacro {\x}{18*\dx}
    \pgfmathsetmacro {\y}{-6*\dy}

    \draw[-] (\x+0*\dx,\y+0*\dy) -- (\x+0*\dx,\y+1*\dy);
    \node () at (\x+0*\dx,\y+0*\dy) [leg] {.};
    \draw[-] (\x+1*\dx,\y+0*\dy) -- (\x+1*\dx,\y+1*\dy);
    \node () at (\x+1*\dx,\y+0*\dy) [leg] {.};
    \draw[-] (\x+0*\dx,\y+1*\dy) -- (\x+1*\dx,\y+1*\dy);
    
    \draw[-] (\x+2*\dx,\y+0*\dy) -- (\x+2*\dx,\y+2*\dy);
    \node () at (\x+2*\dx,\y+0*\dy) [leg] {.};
    \draw[-] (\x+5*\dx,\y+0*\dy) -- (\x+5*\dx,\y+2*\dy);
    \node () at (\x+5*\dx,\y+0*\dy) [leg] {.};
    \draw[-] (\x+2*\dx,\y+2*\dy) -- (\x+5*\dx,\y+2*\dy);
    
    \draw[-] (\x+3*\dx,\y+0*\dy) -- (\x+3*\dx,\y+1*\dy);
    \node () at (\x+3*\dx,\y+0*\dy) [leg] {.};
    \draw[-] (\x+4*\dx,\y+0*\dy) -- (\x+4*\dx,\y+1*\dy);
    \node () at (\x+4*\dx,\y+0*\dy) [leg] {.};
    \draw[-] (\x+3*\dx,\y+1*\dy) -- (\x+4*\dx,\y+1*\dy);

    \node () at (\x+5.5*\dx,\y+0*\dy) [label={[label distance = -1.5mm]right:{$Q_{\pi} = \tfrac{(1-s)^3}{2}$}}] {};    
    \node () at (\x+5.5*\dx,\y+2*\dy) [label={[label distance = -1.5mm]right:{$P_{\pi} = \frac{s^2+1}{2}$}}] {};

    \pgfmathsetmacro {\x}{36*\dx}
    \pgfmathsetmacro {\y}{-6*\dy}

    \draw[-] (\x+0*\dx,\y+0*\dy) -- (\x+0*\dx,\y+2*\dy);
    \node () at (\x+0*\dx,\y+0*\dy) [leg] {.};
    \draw[-] (\x+3*\dx,\y+0*\dy) -- (\x+3*\dx,\y+2*\dy);
    \node () at (\x+3*\dx,\y+0*\dy) [leg] {.};
    \draw[-] (\x+0*\dx,\y+2*\dy) -- (\x+3*\dx,\y+2*\dy);
    
    \draw[-] (\x+1*\dx,\y+0*\dy) -- (\x+1*\dx,\y+1*\dy);
    \node () at (\x+1*\dx,\y+0*\dy) [leg] {.};
    \draw[-] (\x+2*\dx,\y+0*\dy) -- (\x+2*\dx,\y+1*\dy);
    \node () at (\x+2*\dx,\y+0*\dy) [leg] {.};
    \draw[-] (\x+1*\dx,\y+1*\dy) -- (\x+2*\dx,\y+1*\dy);
    
    \draw[-] (\x+4*\dx,\y+0*\dy) -- (\x+4*\dx,\y+1*\dy);
    \node () at (\x+4*\dx,\y+0*\dy) [leg] {.};
    \draw[-] (\x+5*\dx,\y+0*\dy) -- (\x+5*\dx,\y+1*\dy);
    \node () at (\x+5*\dx,\y+0*\dy) [leg] {.};
    \draw[-] (\x+4*\dx,\y+1*\dy) -- (\x+5*\dx,\y+1*\dy);
    
    \node () at (\x+5.5*\dx,\y+0*\dy) [label={[label distance = -1.5mm]right:{$Q_{\pi} = \tfrac{(1-s)^3}{2}$}}] {};
    \node () at (\x+5.5*\dx,\y+2*\dy) [label={[label distance = -1.5mm]right:{$P_{\pi} = \frac{s^2+1}{2}$}}] {};
    
    \pgfmathsetmacro {\x}{0}
    \pgfmathsetmacro {\y}{-13*\dy}

    \draw[-] (\x+0*\dx,\y+0*\dy) -- (\x+0*\dx,\y+3*\dy);
    \node () at (\x+0*\dx,\y+0*\dy) [leg] {.};
    \draw[-] (\x+5*\dx,\y+0*\dy) -- (\x+5*\dx,\y+3*\dy);
    \node () at (\x+5*\dx,\y+0*\dy) [leg] {.};
    \draw[-] (\x+0*\dx,\y+3*\dy) -- (\x+5*\dx,\y+3*\dy);
    
    \draw[-] (\x+1*\dx,\y+0*\dy) -- (\x+1*\dx,\y+1*\dy);
    \node () at (\x+1*\dx,\y+0*\dy) [leg] {.};
    \draw[-] (\x+2*\dx,\y+0*\dy) -- (\x+2*\dx,\y+1*\dy);
    \node () at (\x+2*\dx,\y+0*\dy) [leg] {.};
    \draw[-] (\x+1*\dx,\y+1*\dy) -- (\x+2*\dx,\y+1*\dy);
    
    \draw[-] (\x+3*\dx,\y+0*\dy) -- (\x+3*\dx,\y+1*\dy);
    \node () at (\x+3*\dx,\y+0*\dy) [leg] {.};
    \draw[-] (\x+4*\dx,\y+0*\dy) -- (\x+4*\dx,\y+1*\dy);
    \node () at (\x+4*\dx,\y+0*\dy) [leg] {.};
    \draw[-] (\x+3*\dx,\y+1*\dy) -- (\x+4*\dx,\y+1*\dy);
    
    \node () at (\x+5.5*\dx,\y+0*\dy) [label={[label distance = -1.5mm]right:{$Q_{\pi} = \tfrac{(1-s)^3}{3}$}}] {};
    \node () at (\x+5.5*\dx,\y+2*\dy) [label={[label distance = -1.5mm]right:{$P_{\pi} = \tfrac{-s^3 + 3s^2 + 1}{3}$}}] {};
    
    \pgfmathsetmacro {\x}{18*\dx}
    \pgfmathsetmacro {\y}{-13*\dy}

    \draw[-] (\x+0*\dx,\y+0*\dy) -- (\x+0*\dx,\y+3*\dy);
    \node () at (\x+0*\dx,\y+0*\dy) [leg] {.};
    \draw[-] (\x+5*\dx,\y+0*\dy) -- (\x+5*\dx,\y+3*\dy);
    \node () at (\x+5*\dx,\y+0*\dy) [leg] {.};
    \draw[-] (\x+0*\dx,\y+3*\dy) -- (\x+5*\dx,\y+3*\dy);
    
    \draw[-] (\x+1*\dx,\y+0*\dy) -- (\x+1*\dx,\y+2*\dy);
    \node () at (\x+1*\dx,\y+0*\dy) [leg] {.};
    \draw[-] (\x+4*\dx,\y+0*\dy) -- (\x+4*\dx,\y+2*\dy);
    \node () at (\x+4*\dx,\y+0*\dy) [leg] {.};
    \draw[-] (\x+1*\dx,\y+2*\dy) -- (\x+4*\dx,\y+2*\dy);
    
    \draw[-] (\x+2*\dx,\y+0*\dy) -- (\x+2*\dx,\y+1*\dy);
    \node () at (\x+2*\dx,\y+0*\dy) [leg] {.};
    \draw[-] (\x+3*\dx,\y+0*\dy) -- (\x+3*\dx,\y+1*\dy);
    \node () at (\x+3*\dx,\y+0*\dy) [leg] {.};
    \draw[-] (\x+2*\dx,\y+1*\dy) -- (\x+3*\dx,\y+1*\dy);
    
    \node () at (\x+5.5*\dx,\y+0*\dy) [label={[label distance = -1.5mm]right:{$Q_{\pi} = \tfrac{(1-s)^3}{6}$}}] {};
    \node () at (\x+5.5*\dx,\y+2*\dy) [label={[label distance = -1.5mm]right:{$P_{\pi} = \tfrac{3s^2 + 1}{6}$}}] {};
    
\end{tikzpicture}
    \caption{$Q_{\pi}$ and $P_{\pi}$ for the smallest noncrossing pair partitions}
\label{figure:pairPartitionPolynomialsOfTypePandQ}
\end{figure}
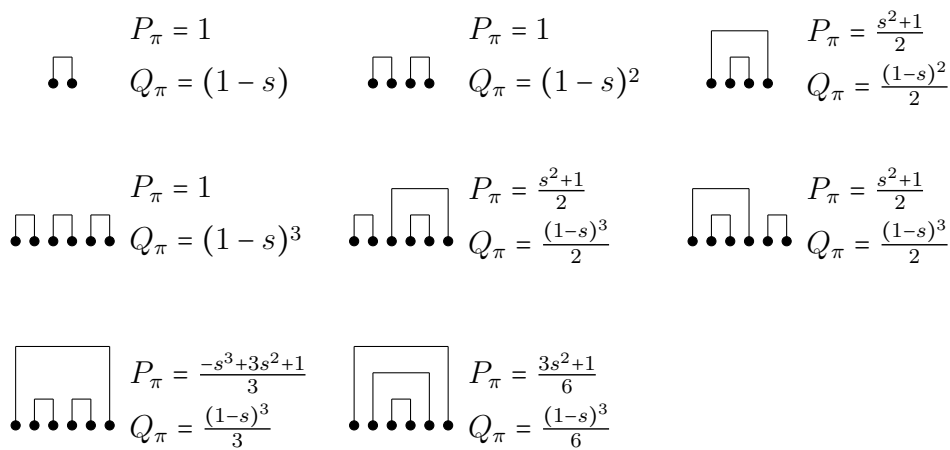
\noindent For instance,
$$
P_{\pi}(s) = \left( \int_{0}^{s} \frac{t^2+1}{2} \, dt + \int_{s}^{1} \frac{(1-t)^2}{2} \, dt \right) \cdot 1 = \dfrac{3s^2 + 1}{6} \text{,}
$$
where $\pi = \big\{ \{ 1,6 \}, \{ 2,5 \}, \{ 3,4 \} \big\}$.
\end{example}

The first step in the process of finding $\mgfLimit{\lambda}$, $\lambda \in \RR$, is to express it by means of the polynomials $P_{\pi}$, $\pi \in \noncrk{2+}$.
\begin{lemma}
The moment generating function of $\mu_{\lambda}$ has the form
\begin{equation}
\label{eq:MGFpartialPoisson}
\mgfLimit{\lambda}(z) =
\sum_{\pi \in \noncrk{2+}} P_{\pi}(1) \cdot \lambda^{\legs{\pi} - 2 \card{\pi}} z^{\legs{\pi}} \text{}
\end{equation}
for each $\lambda \in \RR$ and $z \in \CC$ from some neighborhood of $0$, where $\legs{\pi}$ is the number of all legs in $\pi$.
\end{lemma}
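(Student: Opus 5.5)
The plan is to identify the coefficient $\card{\vNoncr(\pi)}/\card{\pi}!$ from \eqref{eq:limitMomentFormula_pi1} with $P_{\pi}(1)$ for every $\pi \in \noncrk{2+}$. I would do this through the counting functions $c_{N,l}(\pi)$ from the proof of Theorem~\ref{thm:MGFexact}, taking a scaling limit $l \approx sN$. Recall two facts from there: $c_{N,l}(\pi) = \card{\vlNoncr{N}(\pi)}$ for $l > N$, and $c_{N,0}(\pi)$ counts the monotone labelings. By \eqref{equation:vlNoncr_asymptotics} it then suffices to show
\begin{equation*}
\lim_{N\to\infty} \frac{c_{N,N+1}(\pi)}{N^{\card{\pi}}} = P_{\pi}(1) \text{.}
\end{equation*}
Once this holds, \eqref{eq:limitMomentFormula_pi1} gives $\momentLimit{n}{\lambda} = \sum_{\pi \in \noncrk{2+}(n)} P_\pi(1)\lambda^{n-2\card{\pi}}$. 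Summing against $z^n$ then yields \eqref{eq:MGFpartialPoisson}. Convergence near $0$ follows because $0 \le P_\pi(1) = \card{\vNoncr(\pi)}/\card{\pi}! \le 1$, so the $n$th coefficient is bounded by $\max(1,\lvert\lambda\rvert^n) C_n$, exactly as in \eqref{eq:radiusOfConvergenceExact}.

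I would prove two uniform statements simultaneously by induction on the number of blocks:
\begin{equation*}
\sup_{0\le i\le N}\Big\lvert \frac{c_{N-i,0}(\pi)}{N^{\card{\pi}}} - Q_\pi\big(\tfrac{i}{N}\big)\Big\rvert \to 0, \qquad \sup_{0\le l\le N+1}\Big\lvert \frac{c_{N,l}(\pi)}{N^{\card{\pi}}} - P_\pi\big(\tfrac{l}{N}\big)\Big\rvert \to 0 \text{.}
\end{equation*}
Here $P_\pi$ and $Q_\pi$ are extended continuously, and the value at $l=N+1$ is handled by continuity. The base case $\pi=\emptyset$ is trivial.

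For the inductive step, write $\pi = \ncPartitionRecurrenceD{\pi'_1}{\cdots}{\pi'_p}{\pi''}$ and use the recurrence established in the proof of Theorem~\ref{thm:MGFexact}:
\begin{equation*}
c_{N,l}(\pi) = \Big( \sum_{i=1}^{l-1} \prod_{q=1}^p c_{N,i}(\pi'_q) + \sum_{i=l+1}^N \prod_{q=1}^p c_{N-i,0}(\pi'_q) \Big)\, c_{N,l}(\pi'') \text{.}
\end{equation*}
Taking $l=0$ and $N-i$ in place of $N$ gives the analogous monotone recurrence, in which only the second sum survives. Its labels lie in an interval of length $N-i$, which rescales to $[s,1]$ with $s = i/N$.

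Since $\card{\pi} = \sum_q \card{\pi'_q} + 1 + \card{\pi''}$, dividing by $N^{\card{\pi}}$ turns each bracket into $\frac1N\sum$ of products of normalized counts. The induction hypothesis replaces these by $\prod_q P_{\pi'_q}(i/N)$ and $\prod_q Q_{\pi'_q}(i/N)$ with uniformly small error. This uses that all normalized counts are bounded by $1$, and products of uniformly convergent bounded sequences converge uniformly. The resulting expressions are Riemann sums of polynomials, so they converge uniformly in $l/N = s$ to $\int_0^s \prod P_{\pi'_q} + \int_s^1 \prod Q_{\pi'_q}$, respectively $\int_s^1 \prod Q_{\pi'_q}$. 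Multiplying by the factor for $\pi''$ reproduces Definition~\ref{GPQPolynom} exactly.

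The main obstacle is keeping the convergence uniform in $l$ (resp.\ $i$), since the inner limits are evaluated at the moving points $i/N$. Pointwise convergence would not suffice; this is why the induction is stated with suprema. The endpoint terms ($i=l$ is excluded, and $l=N+1$ lies outside $[0,N]$) each contribute $O(1/N)$ and vanish in the limit. With the claim at $s=1$ in hand, the lemma follows as described in the first paragraph.
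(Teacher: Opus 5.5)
Your argument is correct, but it takes a different route from the paper's proof.

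\textbf{The paper's route.} The paper does not run a scaling-limit induction. It introduces $A^{\pm}(N)=N^{-1/2}\sum_{i\le N}A_i^{\pm}$ and, for noncrossing pair partitions $\pi$, the products $A_\pi(N)$. It notes that $\varphi(A_\pi(N))=\lvert\mathcal{LV}_N(\pi)\rvert/N^{\lvert\pi\rvert}$ and then imports $\lim_N\varphi(A_\pi(N))=P_\pi(1)$ from Lemmas~6.12 and~7.3 of~\cite{AD2020}, i.e.\ from the V-monotone central limit theorem. Partitions with middle legs are handled by passing to the pair partition $\tilde\pi$ obtained by deleting all middle legs. This uses that neither $\lvert\mathcal{OV}(\pi)\rvert/\lvert\pi\rvert!$ nor $P_\pi$ changes under that operation.

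\textbf{Your route.} You prove $P_\pi(1)=\lvert\mathcal{OV}(\pi)\rvert/\lvert\pi\rvert!$ from scratch. You use the counting recurrence for $c_{N,l}(\pi)$ from the proof of Theorem~\ref{thm:MGFexact} and a uniform Riemann-sum induction. This exhibits $P_\pi(s)$ and $Q_\pi(s)$ as the scaling limits of $c_{N,l}(\pi)/N^{\lvert\pi\rvert}$ and $c_{N-i,0}(\pi)/N^{\lvert\pi\rvert}$ as $l/N\to s$, respectively $i/N\to s$.

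\textbf{Comparison.}
\begin{itemize}
\item Your approach works directly on $\noncrk{2+}$. No reduction to pair partitions is needed, because the recurrence and Definition~\ref{GPQPolynom} use the same decomposition.
\item It is self-contained and does not depend on~\cite{AD2020}.
\item It explains where the two integrals in the definition of $P_\pi$ come from: the block containing $1$ carries a label below, respectively above, the outer label.
\item You also supply the bound giving convergence near $0$, which the paper leaves implicit.
\end{itemize}
The paper's version is much shorter. However, it rests on the external lemmas and on the middle-leg reduction, which is asserted there without detailed justification.
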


\begin{proof}
We use notation from the proof of Theorem~\ref{thm:limitMomentFormula}. According to~\eqref{eq:limitMomentFormula_pi1}, the moment generating function of $\mu_{\lambda}$, $\lambda \in \RR$, is equal to
\begin{equation}
\label{eq:MGFpartialPoisson0}
\mgfLimit{\lambda}(z) =
\sum_{\pi \in \noncrk{2+}} \frac{\card{ \vNoncr(\pi) }}{\card{\pi}!} \cdot \lambda^{\legs{\pi} - 2 \card{\pi}} z^{\legs{\pi}} \text{.}
\end{equation}
Define $A^{+}(N)=\frac{1}{\sqrt{N}}\sum_{i=1}^{N}A_i^+, A^-(N)=\frac{1}{\sqrt{N}}\sum_{i=1}^{N}A_i^-$. For a noncrossing pair partition $\pi$, consider the operator
\begin{equation*}
A_{\pi}(N)=
\begin{cases}
\id & \text{if $\pi = \emptyset$,} \\
A^-(N)A_{\pi'}(N)A^{+}(N)A_{\pi''}(N) & \text{if $\pi = \ncPartitionRecurrenceB{\pi'}{\pi''}$.}
\end{cases}
\end{equation*}
By multilinearity of $\varphi$, Propositions~\ref{proposition:Riordan},~\ref{proposition:bijRiordan} and Lemma~\ref{lemLS}, we obtain
\begin{align*}
\varphi\left(A_{\pi}(N)\right)=
N^{-\card{\pi}} \cdot \sum_{\mathbf{i} \in [N]^{\legs{\pi}}} \varphi(\partOpDiscA{\pi}{\mathbf{i}}) = N^{-\card{\pi}} \sum_{(\pi, \mathrm{L}) \in \vlNoncrk{2+}{N}(\pi)} 1 = \frac{\card{ \vlNoncr{N}(\pi) }}{N^{\card{\pi}}} \text{.}
\end{align*}
Using~\eqref{equation:vlNoncr_asymptotics} and Lemmas~6.12,~7.3 from~\cite{AD2020}, we get
\begin{equation}
\frac{\card{ \vNoncr(\pi) }}{\card{\pi}!} = \lim \limits_{N \to \infty} \frac{\card{ \vlNoncr{N}(\pi) }}{N^{\card{\pi}}}=\lim\limits_{N\to\infty}\varphi(A_{\pi}(N))=P_{\pi}(1)
\end{equation}
for all $\pi \in \noncrk{2}$. Moreover,
\begin{equation*}
\frac{\card{ \vNoncr(\pi) }}{\card{\pi}!} = \frac{\card{ \vNoncr(\tilde{\pi}) }}{\card{\tilde{\pi}}!} = P_{\tilde{\pi}}(1) = P_{\pi}(1) \text{,}
\end{equation*}
where $\tilde{\pi} \in \noncrk{2}$ is obtained from $\pi$ by removing all middle legs. Our assertion follows then from~\eqref{eq:MGFpartialPoisson0}.
\end{proof}

In order to find $M_{\lambda}$, let us introduce an auxiliary function
\begin{equation}
P(z, \lambda; s) = \sum_{\pi \in \noncrk{2+}} P_{\pi}(s) \cdot \lambda^{\legs{\pi} - 2 \cdot \card{\pi}} z^{\legs{\pi}} \text{,} \qquad s \in [0, 1] \text{.}
\end{equation}
Of course,
\begin{equation*}
M_{\lambda}(x) = P(x, \lambda; 1) \text{}
\end{equation*}
for a real $x$ small enough, therefore it suffices to find the integral equation for $P$, which is done in the following lemma.
\begin{lemma}
The function $s \mapsto P(x, \lambda; s)$ fulfills the following integral equation:
\begin{equation}
\label{eq:MGFpartialPoissonP}
\left( \sqrt{ (1 - \lambda x)^2 - 2 (1-s) x^2 } - \frac{x}{\lambda} \left( \int_{0}^{s} \frac{\diff t}{1 - \lambda x P(x, \lambda; t)} -s \right) + \lambda x \right) P(x, \lambda; s) = 1 \text{,}
\end{equation}
where $s \in [0, 1]$, $\lambda \in \RR \setminus \{ 0 \}$, and $0 < x < \lfrac{1}{\lvert \lambda \rvert}$.
\end{lemma}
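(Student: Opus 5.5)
The plan is to derive generating-function versions of the recursions in Definition~\ref{GPQPolynom}, for $P$ and for the companion series
\begin{equation*}
Q(x, \lambda; s) = \sum_{\pi \in \noncrk{2+}} Q_{\pi}(s) \cdot \lambda^{\legs{\pi} - 2 \card{\pi}} x^{\legs{\pi}} \text{.}
\end{equation*}
We then solve the equation for $Q$ in closed form and substitute the result into the equation for $P$.

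\textbf{Step 1: functional equations for $Q$ and $P$.} Every nonempty $\pi \in \noncrk{2+}$ decomposes uniquely as $\pi = \ncPartitionRecurrenceD{\pi'_1}{\cdots}{\pi'_p}{\pi''}$, where the block containing $1$ has $p-1$ middle legs. Then $\legs{\pi} = \sum_q \legs{\pi'_q} + p + 1 + \legs{\pi''}$ and $\card{\pi} = \sum_q \card{\pi'_q} + 1 + \card{\pi''}$. This is exactly the bookkeeping in the proof of Theorem~\ref{thm:MGFexact}, and the outer block contributes the factor $\lambda^{p-1} x^{p+1} = \tfrac{x}{\lambda} (\lambda x)^p$. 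Summing over $p$ and over $\pi'_1, \ldots, \pi'_p, \pi''$, and interchanging sums and integrals (absolute convergence for small $x$ follows from the Catalan bound of Theorem~\ref{thm:MGFexact}, since $0 \le Q_\pi, P_\pi \le 1$ on $[0,1]$ by induction), we get
\begin{align*}
Q(s) &= 1 + \frac{x}{\lambda} \left( \int_s^1 \Big( \frac{1}{1 - \lambda x Q(t)} - 1 \Big) \diff t \right) Q(s) \text{,}\\
P(s) &= 1 + \frac{x}{\lambda} \left( \int_0^s \Big( \frac{1}{1 - \lambda x P(t)} - 1 \Big) \diff t + \int_s^1 \Big( \frac{1}{1 - \lambda x Q(t)} - 1 \Big) \diff t \right) P(s) \text{.}
\end{align*}

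\textbf{Step 2: closed form for the monotone part.} Set $a = x/\lambda$ and $F(s) = \int_s^1 \big( \frac{1}{1 - \lambda x Q(t)} - 1 \big) \diff t$, so that $Q = 1/(1 - aF)$. Then
\begin{equation*}
1 - \lambda x Q = \frac{1 - \lambda x - aF}{1 - aF} \text{,} \qquad \text{hence} \qquad F'(s) = -\frac{\lambda x}{1 - \lambda x - aF} \text{.}
\end{equation*}
Integrating with $F(1) = 0$ gives $(1 - \lambda x) F - \tfrac{a}{2} F^2 = \lambda x (1-s)$. Put $w = 1 - \lambda x - aF = 1/Q - \lambda x$. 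Expanding $w^2$ and using $2a\lambda x = 2x^2$ yields
\begin{equation*}
w^2 = (1 - \lambda x)^2 - 2x^2(1-s) \text{.}
\end{equation*}
Since $w(1) = 1 - \lambda x > 0$ for $0 < x < 1/\lvert \lambda \rvert$ and $w$ is continuous and nonvanishing on $[0,1]$, the positive root is the correct one. Therefore
\begin{equation*}
\frac{x}{\lambda} F(s) = 1 - \lambda x - \sqrt{(1 - \lambda x)^2 - 2(1-s)x^2} \text{.}
\end{equation*}

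\textbf{Step 3: conclusion.} Divide the equation for $P$ by $P(s)$ and write $\int_0^s \big( \tfrac{1}{1 - \lambda x P} - 1 \big) = \int_0^s \tfrac{\diff t}{1 - \lambda x P} - s$. Substituting the expression for $\tfrac{x}{\lambda} F(s)$ from Step~2 gives exactly~\eqref{eq:MGFpartialPoissonP}.

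\textbf{Main obstacle.} The delicate point is analytic rather than algebraic. The manipulations above are first justified for $x$ in a small neighbourhood of $0$, where the series converge absolutely. They must then be extended to the full range $0 < x < 1/\lvert \lambda \rvert$, together with a consistent choice of the branch of the square root (nonvanishing of $w$ on $[0,1]$). I would argue that, for $0 \le s \le 1$, the square-root expression stays positive throughout this range, and then extend the identity from small $x$ by analytic continuation in $x$. If continuation proves awkward, I would instead read the lemma as an identity valid wherever $P$ converges, stating this explicitly.
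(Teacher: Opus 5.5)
Your argument is the paper's proof. It uses the same pair of integral equations for $P$ and $Q$ from the outer-block decomposition. It also uses the same quadratic $(1-\lambda x)F-\frac{x}{2\lambda}F^2=\lambda x(1-s)$ for $F=\int_s^1\big(\frac{1}{1-\lambda xQ}-1\big)\,dt$, with the branch of the square root fixed by $F(1)=0$, and the same substitution into the equation for $P$.

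The one thing to correct is your plan in the ``main obstacle'' paragraph. The radicand is not positive on the whole range $0<x<1/\lvert\lambda\rvert$: for $\lambda>0$ and $s=0$ it equals $(1-\lambda x-\sqrt{2}x)(1-\lambda x+\sqrt{2}x)$, which is negative for $1/(\lambda+\sqrt{2})<x<1/\lambda$. So continuation to that range is impossible. Your fallback reading, the identity for $x>0$ small enough where the series converge, is exactly what your argument establishes, and it is also all the paper's equally formal argument establishes.
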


\begin{proof}
Let $\legs{\pi}$ and $\ml{\pi}$ be the numbers of all legs and all middle legs in $\pi$, respectively. For $\pi \in \noncrk{2+}$, we have $\ml{\pi} = \legs{\pi} - 2 \cdot \card{\pi}$. Define 
\begin{equation*}
Q(x, \lambda; s) = \sum_{\pi\in NC^{2+}} Q_{\pi}(s) \cdot \lambda^{\ml{\pi}} x^{\legs{\pi}}
\end{equation*}
and let us first prove that the following system of integral equations is fulfilled:
\begin{equation}
\label{eq:MGFpartialPoissonP0}
P(x, \lambda; s) = 1 + \frac{x}{\lambda} \left( \int_{0}^{s} \frac{\diff t}{1 - \lambda x P(x, \lambda; t)} + \int_{s}^{1} \frac{\diff t}{1 - \lambda x Q(x, \lambda; t)} - 1 \right) P(x, \lambda; s) \text{,}
\end{equation}
and
\begin{equation}
\label{eq:MGFpartialPoissonQ}
Q(x, \lambda; s) = 1 + \frac{x}{\lambda} \left( \int_{s}^{1} \frac{\diff t}{1 - \lambda x Q(x, \lambda; t)} - (1-s) \right) Q(x, \lambda; s) \text{.}
\end{equation}
According to Definition \ref{GPQPolynom}, we have 
\begin{align*}
P(x, \lambda; s)
= & \, \sum_{\pi\in \noncrk{2+}} P_{\pi}(s) x^{\legs{\pi}} \lambda^{\ml{\pi}} = 1 + \frac{x}{\lambda} \sum_{p=1}^{\infty} (x\lambda)^p \\
& \cdot \sum_{\pi_1', \ldots, \pi_p', \pi''\in \noncrk{2+}} \Big( \int_{0}^{s} P_{\pi_1'}(t) x^{\legs{\pi_1'}} \lambda^{\ml{\pi_1'}} \cdots P_{\pi_p'}(t) x^{\legs{\pi_p'}} \lambda^{\ml{\pi_p'}} \diff{t} \\
& + \int_{s}^{1} Q_{\pi_1'}(t) x^{\legs{\pi_1'}} \lambda^{\ml{\pi_1'}} \cdots Q_{\pi_p'}(t) x^{\legs{\pi_p'}} \lambda^{\ml{\pi_p'}} \diff{t} \Big) P_{\pi''}(s) x^{\legs{\pi''}} \lambda^{\ml{\pi''}} \text{.}
\end{align*}
Applying Fubini's theorem, we obtain 
\begin{align*}
P(x, \lambda; s)
= & \, 1 + \frac{x}{\lambda} \sum_{p=1}^{\infty}(x \lambda)^{p} \Bigg( \int_{0}^{s} \prod_{q=1}^{p} \left( \sum_{\pi_q'} P_{\pi'_q}(t) x^{ \legs{\pi'_q} } \lambda^{ \ml{\pi'_q} } \right) \diff{t} \\
& + \int_{s}^{1} \prod_{q=1}^{p} \left( \sum_{\pi_q'} Q_{\pi'_q}(t) x^{ \legs{\pi'_q} } \lambda^{ \ml{\pi'_q} } \right) \diff{t} \Bigg) \cdot \sum_{\pi''} P_{\pi''}(s) x^{ \legs{\pi''} } \lambda^{ \ml{\pi''} } \\
= & \, 1 + \frac{x}{\lambda} \sum_{p=1}^{\infty}(x \lambda)^{p} \left( \int_0^s P^{p}(x, \lambda; t) \diff{t} + \int_s^1 Q^{p}(x, \lambda; t) \diff{t} \right) \cdot P(x, \lambda; s) \\
= & \, 1 + \frac{x}{\lambda} \left( \int_0^s \frac{\diff{t}}{1 - x \lambda P(x, \lambda; t)} + \int_s^1 \frac{\diff{t}}{1 - x \lambda Q(x, \lambda; t)} - 1 \right) \cdot P(x, \lambda; s) \text{.}
\end{align*}
The prove of~\eqref{eq:MGFpartialPoissonQ} is similar, so we leave it to the reader.

To obtain~\eqref{eq:MGFpartialPoissonP}, let us substitute
\begin{equation*}
y(s)=\int_{s}^{1}\frac{dt}{1-x\lambda Q(x,\lambda;t)}-(1-s)
\end{equation*}

in~\eqref{eq:MGFpartialPoissonQ}. Multiplying it by $\lambda x (y'(s) - 1)$, we get
\begin{equation*}
y'(s) = \lambda x \cdot (y'(s) - 1) + \frac{x}{\lambda} y(s) y'(s) \text{,}
\end{equation*}
obtaining the following initial value problem:
\begin{equation*}
\begin{cases}
(1 - \lambda x - \dfrac{x}{\lambda} y) y' = - \lambda x \text{,} \\
y(1) = 0 \text{,}
\end{cases} \qquad
y = y(s) \text{, } s \in [0, 1] \text{.}
\end{equation*}
This initial value problem leads to the quadratic equation
\begin{equation*}
- \frac{x}{2 \lambda} y^2 + (1 - \lambda x)y - \lambda x (1 - s) = 0 \text{,}
\end{equation*}
which, according to the initial condition and the fact that $0 < x < \lfrac{1}{\lvert \lambda \rvert}$, has the following solution
\begin{equation*}
y(s) = - \frac{\lambda}{x} \sqrt{ (1 - \lambda x)^2 - 2 (1-s) x^2 } + \frac{\lambda}{x} - \lambda^2 \text{.}
\end{equation*}
Replacing
\begin{equation*}
\int_{s}^{1} \frac{\diff t}{1 - \lambda x Q(x, \lambda; t)} = y(s) + (1-s) \text{}
\end{equation*}
in~\eqref{eq:MGFpartialPoissonP0}, after elementary transformations, we get~\eqref{eq:MGFpartialPoissonP}, which finishes the proof.
\end{proof}

\begin{remark}
Using~\eqref{eq:MGFpartialPoissonQ}, we can also get
\begin{equation*}
Q(x, \lambda; s) = \frac{1}{\lambda x + \sqrt{ (1 - \lambda x)^2 - 2 (1-s) x^2 }} \text{,}
\end{equation*}
which satisfies
\begin{equation*}
Q(x, \lambda, s) = Q \left( x \sqrt{1-s}, \frac{\lambda}{\sqrt{1-s}}; 0 \right) \text{.}
\end{equation*}
It is worth mentioning that $x \mapsto Q(x, \lambda; 0)$ is the moment generating function of the limit distribution of the monotone counterpart of the operator $S_N(\lambda)$ (see~\cite[Section~5]{Mur0} with a different notation).
\end{remark}

Now, we are in a position to establish the moment generating function of the operator $S_N(\lambda)$. It is expressed in terms of the function $T_0 \colon [0, \infty) \mapsto \RR$ given by
\begin{equation}
\label{equation:theRealVersionOfT}
T_0(s) = -\frac{\sqrt{3}}{3} \bigg( \arctan \bigg( \frac{2s-1}{\sqrt{3}} \bigg) - \frac{\pi}{6} \bigg) - \frac{\ln( s^2 - s + 1 )}{2} \text{,}
\end{equation}
which can be written as the following integral:
\begin{equation}
\label{equation:theRealVersionOfTInTheIntegralForm}
T_0(s) = \int_{s}^{1} \dfrac{t}{t^{2} - t + 1} \, \diff{t} \text{.}
\end{equation}
From that, we can easily see that $T_0$ is decreasing and its range is $(-\infty, T_0(0)]$, where $T_0(0) = \lfrac{\sqrt{3} \pi}{9}$ and the limit of $T_0(s)$ as $s \to \infty$ is $-\infty$. We define $T_0^{-1} \colon ( -\infty, \lfrac{\sqrt{3} \pi}{9} ] \to [0, \infty)$ as the inverse of $T_0$. 

\begin{theorem}
The moment generating function $M_{\lambda}$ of the partial Poisson distribution with the parameter $\lambda \in \RR$ is given by
\begin{equation}
\label{eq:MGF}
\frac{1}{M_{\lambda}(x)} = \lambda x + (1 - \lambda x) \cdot T_0^{-1} \left( \frac{1}{2} \ln \left( \frac{(1 - \lambda x)^2}{(1 - \lambda x)^2 - 2x^2} \right) \right) \text{}
\end{equation}
for $x > 0$ small enough.
\end{theorem}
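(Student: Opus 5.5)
The plan is to turn the integral equation \eqref{eq:MGFpartialPoissonP} into a first-order ODE in $s$, solve it explicitly by a homogeneity substitution, and evaluate at $s=1$, using $M_{\lambda}(x)=P(x,\lambda;1)$. Fix $\lambda\neq 0$ and $0<x$ small. Set
\begin{equation*}
R(s)=\sqrt{(1-\lambda x)^2-2(1-s)x^2}, \qquad f(s)=\frac{1}{P(x,\lambda;s)}.
\end{equation*}
Then \eqref{eq:MGFpartialPoissonP} reads
\begin{equation*}
f(s)=R(s)-\frac{x}{\lambda}\Big(\int_0^s\frac{\diff t}{1-\lambda x P(x,\lambda;t)}-s\Big)+\lambda x .
\end{equation*}
For $x$ small, $P$ is a convergent power series in $x$ whose coefficients are polynomials in $s$ (by the definition of $P_\pi$), and it is close to $1$. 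Hence $f$ is smooth in $s$, and $f(0)=R(0)+\lambda x$.

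\textbf{Step 1: the ODE.} Differentiate in $s$. Using $R'=x^2/R$ and $1/(1-\lambda x/f)-1=\lambda x/(f-\lambda x)$, we get
\begin{equation*}
f'(s)=\frac{x^2}{R(s)}-\frac{x^2}{f(s)-\lambda x}.
\end{equation*}
With $g=f-\lambda x$ this becomes
\begin{equation*}
g'=x^2\Big(\frac{1}{R}-\frac{1}{g}\Big), \qquad g(0)=R(0).
\end{equation*}

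\textbf{Step 2: change variable to $R$.} Since $R$ is strictly increasing in $s$ with $\diff R/\diff s=x^2/R$, we may regard $g$ as a function of $R$, which gives the homogeneous equation
\begin{equation*}
\frac{\diff g}{\diff R}=1-\frac{R}{g}.
\end{equation*}
Substitute $g=Rw$, with $w=1$ at $R=R(0)$. This gives $Rw'=1-w-1/w$, i.e.
\begin{equation*}
\frac{w\,\diff w}{w^2-w+1}=-\frac{\diff R}{R}.
\end{equation*}
Integrate from $R(0)=\sqrt{(1-\lambda x)^2-2x^2}$ up to $R(1)=1-\lambda x$. By \eqref{equation:theRealVersionOfTInTheIntegralForm}, $\int_w^1 \frac{t\,\diff t}{t^2-t+1}=T_0(w)$, so
\begin{equation*}
T_0\big(w(1)\big)=\ln\frac{R(1)}{R(0)}=\frac12\ln\frac{(1-\lambda x)^2}{(1-\lambda x)^2-2x^2}.
\end{equation*}
The right-hand side is $\geq 0$ and small. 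Since $T_0$ is decreasing with $T_0(1)=0$, the value lies in the range of $T_0$, and $w(1)=T_0^{-1}(\cdot)\in(0,1]$ is well defined.

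\textbf{Step 3: evaluate at $s=1$.} We have
\begin{equation*}
\frac{1}{M_\lambda(x)}=f(1)=\lambda x+R(1)\,w(1)=\lambda x+(1-\lambda x)\,T_0^{-1}(\cdots),
\end{equation*}
which is \eqref{eq:MGF}.

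\textbf{Case $\lambda=0$.} The lemma was stated only for $\lambda\neq0$. I would handle $\lambda=0$ by continuity: each moment $\momentLimit{n}{\lambda}$ is a polynomial in $\lambda$, and for fixed small $x$ both sides of \eqref{eq:MGF} are continuous in $\lambda$ (uniform convergence of the series by the Catalan bound). Alternatively, the same ODE argument can be rerun on the $\lambda=0$ analogue of \eqref{eq:MGFpartialPoissonP}, whose integral term becomes $x^2\int_0^s P\,\diff t$.

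\textbf{Main obstacle.} The delicate step is justifying the manipulations in Step 2. One must check that $g$ and $w$ stay positive and away from $0$ along the whole path $s\in[0,1]$, so that dividing by $g$ and substituting are legitimate, and that the solution of the ODE is unique, so the branch $T_0^{-1}$ is the correct one. For $x$ small this holds because $P\approx1$ and $R\approx1$, so $w$ stays near $1$. I would make this precise with a perturbative bound on $|P-1|$ coming from the Catalan-number estimate already used in \eqref{eq:radiusOfConvergenceExact}.
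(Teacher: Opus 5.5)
Your argument is correct. It follows the same overall plan as the paper: integral equation \eqref{eq:MGFpartialPoissonP}, then a first-order ODE in $s$, then evaluation at $s=1$. Where it differs is in how the ODE is solved.

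The paper works with $y(s)=\int_0^s\frac{dt}{1-x\lambda P(x,\lambda;t)}-s$ and the coordinates $\xi=\frac{\lambda}{x}R$, $u=\xi-y$. It obtains $uu'-u=-\xi$, treats this as an Abel equation of the second kind, and quotes its parametric general solution $\xi=Ce^{T_0(t)}$, $u=t\xi$ from Polyanin--Zaitsev. It then recovers $1/P(x,\lambda;1)=\lambda x\,(1+1/y'(1))$ using $y'=\lambda^2/u$.

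Your variables are rescalings of these: $g=\frac{x}{\lambda}u$ and $R=\frac{x}{\lambda}\xi$. So your equation $g\,\frac{dg}{dR}=g-R$ is literally the paper's equation, and your $w=g/R$ is the paper's parameter $t$. The difference is that you recognize the equation as homogeneous and integrate it by separation of variables, and you read off $1/M_\lambda(x)=f(1)$ directly.

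Your route buys the following:
\begin{enumerate}
\item a self-contained derivation with no handbook citation;
\item no sign bookkeeping in $\lambda$, since your $g$ and $R$ are positive for small $x$ whatever the sign of $\lambda$, whereas $\xi$ has the sign of $\lambda$;
\item a transparent justification of the branch of $T_0^{-1}$;
\item coverage of $\lambda=0$, which the paper's proof does not treat, since it rests on a lemma stated for $\lambda\neq 0$.
\end{enumerate}

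On your ``main obstacle'': no uniqueness theorem for the ODE is needed. You are integrating an identity satisfied by the given function $f=1/P$, which is $C^1$ directly from the integral equation. You only need $g\neq 0$ on $[0,1]$ and $w(1)\ge 0$; the latter matters because $T_0$ is not injective on all of $\RR$. Both follow from $P(x,\lambda;s)=1+O(x^2)$ uniformly in $s\in[0,1]$. That in turn follows from the bounds $0\le P_\pi(s),Q_\pi(s)\le 1$, which are an easy induction on Definition~\ref{GPQPolynom}, together with the Catalan estimate.
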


\begin{proof}
Let us first reduce the integral equation~\eqref{eq:MGFpartialPoissonP} to an initial value problem, proceeding similarly while solving~\eqref{eq:MGFpartialPoissonQ}. Namely, we substitute
\begin{equation}
\label{eq:MGFy}
y(s) = \int_{0}^{s} \frac{dt}{1-x\lambda P(x, \lambda; t)} - s \text{,}
\end{equation}
we multiply both sides by $\lambda x (y'(s) + 1)$, and we subtract $\lambda x y'(s)$, obtaining
\begin{equation*}
\begin{cases}
\left( \sqrt{ (1 - \lambda x)^2 - 2 (1-s) x^2 } - \dfrac{x}{\lambda} y \right) y' = \lambda x \text{,} \\
y(0) = 0 \text{,}
\end{cases} \qquad
y = y(s) \text{, } s \in [0, 1] \text{.}
\end{equation*}
We now change coordinates
\begin{equation*}
\xi = \frac{\lambda}{x} \sqrt{ (1 - \lambda x)^2 - 2 (1-s) x^2 } \text{,} \qquad u = \frac{\lambda}{x} \sqrt{ (1 - \lambda x)^2 - 2 (1-s) x^2 } - y(s) \text{.}
\end{equation*}
The inverse mapping is given by
\begin{equation*}
s = \dfrac{\xi^2}{2 \lambda^2} - \dfrac{(1-\lambda x)^2}{2 x^2} + 1 \text{,} \qquad y = \xi - u(\xi) \text{,}
\end{equation*}
with $\xi \geq 0$. We differentiate with respect to $\xi$, obtaining the following ODE:
\begin{equation*}
\frac{x}{\lambda} \cdot u \cdot \frac{\lambda^2}{\xi} \cdot (1-u') = \lambda x \text{,}
\end{equation*}
where $u' = u'(\xi) = \frac{\diff}{\diff \xi} u(\xi)$. Therefore, the initial value problem takes the form
\begin{equation}
\label{eq:AbelODE}
\begin{cases}
u u' - u = - \xi \text{,} \\
u(\xi_0) = \xi_0 \text{,}
\end{cases}
\end{equation}
where $\xi_0 = \lfrac{\lambda}{x} \cdot \sqrt{ (1 - \lambda x)^2 - 2 x^2 }$, with $u = u(\xi)$ and $\xi$ between $\xi_0$ and $\lfrac{\lambda}{x} - \lambda^2$. The ODE here is an Abel ordinary differential equation of the second kind, whose general solution, according to~\cite[eq.~1.3.1.2.]{PZ2002}, has the parametric form
\begin{equation*}
\xi = C e^{T_0(t)} \text{,} \quad u = C t e^{T_0(t)} \text{,} \quad t \in \RR \text{.}
\end{equation*}
The curve describing our solution must pass through the point $(\xi_0, \xi_0)$. Since $u = t \xi$, this can only hold for $t=1$ for the suitable curve. Therefore, $C = \xi_0$ since $T_0(1) = 0$ (see~\eqref{equation:theRealVersionOfTInTheIntegralForm}) and the solution of~\eqref{eq:AbelODE} is given by
\begin{equation*}
u(\xi) = \xi T_{0}^{-1}\left(\ln\left(\frac{x\xi}{\lambda\sqrt{(1-\lambda x)^2-2x^2}}\right)\right) \text{.}
\end{equation*}
In fact, we need only $M_{\lambda}(x) = P(x, \lambda; 1)$, which, due to~\eqref{eq:MGFy}, can be obtained from
\begin{equation*}
\frac{1}{P(x, \lambda; 1)} = \lambda x \left( 1 + \frac{1}{y'(1)} \right) \text{,}
\end{equation*}
where, using~\eqref{eq:AbelODE}, the derivative of $y$ can be expressed by
\begin{equation*}
y'(s) = \frac{\diff \xi}{\diff s} - \frac{\diff u}{\diff \xi} \cdot \frac{\diff \xi}{\diff s} = \frac{\lambda^2}{\xi(s)} ( 1 - u '(\xi(s)) ) = \frac{\lambda^2}{u(\xi(s))} \text{.}
\end{equation*}
That yields
\begin{equation*}
\frac{1}{P(x, \lambda; 1)} = \lambda x \left( 1 + \frac{u(\xi(1))}{\lambda^2} \right) = \lambda x + (1 - \lambda x) \cdot T_{0}^{-1}\left(\ln\left(\frac{1-\lambda x}{\sqrt{(1-\lambda x)^2-2x^2}}\right)\right) \text{,}
\end{equation*}
which equals to~\eqref{eq:MGF} for $x > 0$ small enough. This proves our theorem.
\end{proof}

\begin{remark}
For $\lambda = 0$, we obtain
\begin{equation*}
\frac{1}{M_{0}(x)} = T_0^{-1} \left( \frac{1}{2} \ln \left( \frac{1}{1 - 2x^2} \right) \right) \text{,}
\end{equation*}
the moment generating function for the standard V-monotone Gaussian distribution~\cite{AD2020}.
\end{remark}

\section{Limit measure}
\label{section:limitMeasure}
In this section, we obtain the limit measure $\mu_{\lambda}$ of the nonsymmetric V-monotone position operator $S_N(\lambda)$ using the Stieltjes inversion formula. Let $G_{\lambda} \colon \CC \setminus \text{supp}(\mu_{\lambda}) \to \CC$ be the Cauchy--Stieltjes transform of the limit measure. Of course, $\mu_{\lambda}$ has a compact support since the radius of convergence of its moment generating function fulfills~\eqref{eq:radiusOfConvergenceExact}, which can be proven similarly as the aforementioned inequality. Using the formula
\begin{equation*}
G_{\lambda}(x) = \frac{1}{x} M_{\lambda} \left( \frac{1}{x} \right) \text{,}
\end{equation*}
we get
\begin{equation}
\label{eq:CSTreal}
\frac{1}{G_{\lambda}(x)} = \lambda + (x - \lambda) \cdot T_0^{-1} \left( \frac{1}{2} \ln \left( 1 + \frac{2}{(x - \lambda)^2 - 2} \right) \right) \text{}
\end{equation}
for $x > 0$ big enough. Note that
\begin{equation}
\label{eq:CST0}
G_{\lambda}(x) = \frac{G_0(x-\lambda)}{1 + \lambda G_{0}(x-\lambda)} \text{,} \quad x > 0 \text{ big enough,}
\end{equation}
where $G_{0}$ is the Cauchy--Stieltjes transform of the standard V-monotone Gaussian distribution obtained and analytically extended in~\cite{AD2025}.
Let us now recall all the necessary definitions and statements required to write the aforementioned transform. In the definitions below, we use the principal value of the logarithm and argument, i.e.,
\begin{equation}
\label{equation:theSuitableLogarithm}
\principalLogarithm z = \ln R + i \varphi \text{,} \qquad \text{for $z = R e^{i \varphi}$ with $R > 0$ and $-\pi < \varphi \leq \pi$,}
\end{equation}
which is analytic on $\CC \setminus (-\infty, 0 ]$. Let $\principalArgument \, z = \Im \principalLogarithm z$.

The suitable analytic extension of $T_0^{-1}$ plays a crucial role in the analytic extension of $G_0$.
\begin{definition}
Consider a strip $D = \{ w \in \CC : -\lfrac{\pi}{2} \leq \Im w \leq 0 \}$ and define $T^{-1} \colon D \to \CC$ by
\begin{equation}
\label{equation:theInverseOfT}
T^{-1}(w) =
\begin{cases}
T_0^{-1}(w) & \text{ if $w \in (-\infty, \lfrac{\sqrt{3} \pi}{9}]$, } \\
g(\eta, \xi) & \text{ if $w = H(\eta, \xi) \in D \setminus (-\infty, \lfrac{\sqrt{3} \pi}{9}]$, for $(\eta, \xi) \in \Theta$, }
\end{cases}
\end{equation}
where $\Theta = \{ (\eta, \xi) \in \RR^2 : -\pi \leq \eta \leq 0 \text{ and } \xi < -\eta \}$ is an infinite trapezoid and the functions $g, H \colon \Theta \to \RR$ are defined by
\begin{equation*}
g(\eta, \xi) = \frac{\sqrt{3}i(e^{-\sqrt{3}(\eta + \xi) + i \xi} - 1)}{1 - e^{-2\sqrt{3}(\eta + \xi)}} + \frac{1+\sqrt{3}i}{2}
\end{equation*}
and
\begin{equation*}
\label{equation:theExactFormOfTheCurvedH}
\begin{split}
H(\eta, \xi) =
& - \frac{\sqrt{3}}{6} \left( \principalArgument \left(e^{-\sqrt{3}(\eta+\xi) + i \xi} - 1 \right) - \principalArgument \left( g(\eta, \xi) - \frac{1-\sqrt{3}i}{2} \right) + 2 \left \lceil \frac{\xi - \pi}{2 \pi} \right \rceil \pi + \frac{ \pi}{6} \right) \\
& \ - \frac{\ln ( \lvert g^2(\eta, \xi) - g(\eta, \xi) + 1 \rvert )}{2} + \frac{i \eta}{2} \text{.}
\end{split}
\end{equation*}

\end{definition}
It was showed~\cite[Theorem 3.4]{AD2025} that $T^{-1}$ is a continuous extension of $T_0^{-1}$ which is analytic on the topological interior of $D$. This is the inverse of some function $T$ which is some extension of $T_0$. This $T$ is not relevant here, however.

The following proposition was given in~\cite[Proposition~2.6]{AD2025}.
\begin{proposition}
\label{proposition:domainCutStripANDhalfOfLogarithm}
The function $W \colon (\CC_{+} \cup \RR) \setminus \{ -\sqrt{2}, 0, \sqrt{2} \} \mapsto \CC$, given by
\begin{equation}
\label{equation:halfOfLogarithm}
W(z) = 
\begin{cases}
\dfrac{1}{2} \principalLogarithm \bigg( 1 + \dfrac{2}{z^{2} - 2} \bigg) & \text{if $z \notin (0, \sqrt{2})$,} \\
\dfrac{1}{2} \ln \bigg( \dfrac{2}{2 - z^{2}} - 1 \bigg) - \dfrac{i \pi}{2} & \text{if $z \in (0, \sqrt{2})$,}
\end{cases}
\end{equation}
is a continuous extension of the function
\begin{equation*}
x \mapsto \frac{1}{2} \ln \left( 1 + \frac{2}{x^2 - 2} \right)
\end{equation*}
which is analytic on $\CC_{+}$.
\end{proposition}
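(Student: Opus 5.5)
The plan is to write the argument of the logarithm as a Möbius function of $w=z^{2}$ and read everything off from where it lands. Put
\[
f(z) = 1 + \frac{2}{z^{2}-2} = \frac{z^{2}}{z^{2}-2} = h(z^{2}), \qquad h(w) = \frac{w}{w-2}.
\]
Two facts will be used repeatedly. On the real line $h$ has derivative $h'(w) = -2/(w-2)^{2} < 0$. And $h$ is a Möbius transformation with inverse $h^{-1}(r) = 2r/(r-1)$.

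\textbf{Step 1 (analyticity on $\CC_{+}$).} For $z \in \CC_{+}$ we have $z^{2} \in \CC \setminus [0,\infty)$. Suppose $h(w) = r$ with $r \in (-\infty,0]$. Then $w = 2r/(r-1) \in [0,2)$. So $f(z) \in (-\infty,0]$ would force $z^{2} \in [0,2)$, which is impossible for $z \in \CC_{+}$. Hence $f$ maps $\CC_{+}$ into $\CC \setminus (-\infty,0]$. Since $\principalLogarithm$ is analytic there by \eqref{equation:theSuitableLogarithm}, $W = \tfrac12 \principalLogarithm f$ is analytic on $\CC_{+}$.

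\textbf{Step 2 (extension on $\lvert x\rvert > \sqrt{2}$).} For real $x$ with $\lvert x\rvert > \sqrt{2}$ we have $f(x) > 1$. Thus $f(x)$ lies in the domain of analyticity of $\principalLogarithm$, so $W$ is continuous at these points. There $W(x) = \tfrac12 \ln\bigl(1 + 2/(x^{2}-2)\bigr)$, which is the function being extended.

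\textbf{Step 3 (the cut, the main obstacle).} For $x \in (-\sqrt{2},0) \cup (0,\sqrt{2})$ we have $f(x) = x^{2}/(x^{2}-2) < 0$, which lies on the branch cut. So I must compute the one-sided limits of $\principalArgument f$ from $\CC_{+}$.

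Take $z = x + i\varepsilon$ with small $\varepsilon > 0$. Then $z^{2} = x^{2} - \varepsilon^{2} + 2ix\varepsilon$. Since $h$ has real coefficients and $h'(x^{2}) < 0$, to first order
\[
\Im f(z) \approx h'(x^{2}) \cdot 2x\varepsilon .
\]
More robustly, $h$ preserves the upper half-plane, so $\operatorname{sgn}\Im f(z) = \operatorname{sgn}\Im z^{2} = \operatorname{sgn} x$. The convenient form for estimates is $\Im h(w) = -2\Im w/\lvert w-2\rvert^{2}$.

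For $x \in (0,\sqrt{2})$ this gives $\Im f(z) < 0$ near the negative real value $f(x)$, so $\principalArgument f(z) \to -\pi$. Hence
\[
W(z) \to \tfrac12 \ln\lvert f(x)\rvert - \tfrac{i\pi}{2}.
\]
Since $\lvert f(x)\rvert = x^{2}/(2-x^{2}) = 2/(2-x^{2}) - 1$, this matches the second line of \eqref{equation:halfOfLogarithm}.

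For $x \in (-\sqrt{2},0)$ we get $\Im f(z) > 0$, so $\principalArgument f(z) \to +\pi$. This is exactly the principal value $\principalArgument f(x) = \pi$ prescribed by \eqref{equation:theSuitableLogarithm}, so the first line of \eqref{equation:halfOfLogarithm} already gives the right boundary value.

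\textbf{Step 4 (full continuity).} To upgrade these radial limits to continuity of $W$ on $(\CC_{+} \cup \RR) \setminus \{-\sqrt{2},0,\sqrt{2}\}$, I will argue as follows. On a small half-disc around each such $x$, the continuous function $f$ stays in a fixed closed half-plane, namely $\{\Im \le 0\}$ for $x > 0$ and $\{\Im \ge 0\}$ for $x < 0$, by the sign rule above. It also stays away from $0$. On each such half-plane $\principalArgument$ (respectively $\principalArgument$ adjusted by $-2\pi$ on the cut for $x > 0$) is continuous. Together with the continuity of $\ln\lvert f\rvert$, this gives the claim.

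The points $0$ and $\pm\sqrt{2}$ are excluded because there $f$ vanishes or has a pole.
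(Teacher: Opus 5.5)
The paper gives no proof of this proposition. It quotes the result from \cite{AD2025} (Proposition~2.6), so there is no in-paper argument to compare against. Your direct verification through $f(z)=h(z^{2})$ with $h(w)=w/(w-2)$ is correct and complete, apart from one false sentence in Step~3.

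The false sentence is the claim that $h$ preserves the upper half-plane. The coefficient matrix of $h$ has determinant $1\cdot(-2)-0\cdot 1=-2<0$, so $h$ maps $\CC_{+}$ onto the lower half-plane. The correct sign rule is therefore $\sgn \Im f(x+i\varepsilon)=-\sgn x$, not $+\sgn x$.

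This corrected rule is exactly what your formula $\Im h(w)=-2\Im w/\lvert w-2\rvert^{2}$ gives. It also agrees with your first-order computation using $h'(x^{2})<0$. More importantly, it is what you actually use afterwards: $\Im f<0$ near $x\in(0,\sqrt{2})$, hence $\principalArgument f\to-\pi$, and $\Im f>0$ near $x\in(-\sqrt{2},0)$, hence $\principalArgument f\to\pi$. So all your conclusions stand.

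To fix the write-up, make two changes:
\begin{enumerate}
\item Delete the ``preserves the upper half-plane'' sentence.
\item In Step~4, take the ``sign rule'' to be the identity $\Im f(z)=-4\,\Re z\,\Im z/\lvert z^{2}-2\rvert^{2}$. On a small closed half-disc around $x$ this gives $\Im f\le 0$ when $x>0$ and $\Im f\ge 0$ when $x<0$.
\end{enumerate}

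It is also worth stating explicitly that, on the half-disc around $x>0$, $W=\tfrac12\bigl(\ln\lvert f\rvert+i\arg_{-}f\bigr)$. Here $\arg_{-}$ is the branch of the argument with values in $[-\pi,0]$, which is continuous on $\{\Im w\le 0\}\setminus\{0\}$. This is precisely what the second line of the definition of $W$ encodes on the real segment, and it makes the continuity claim immediate.
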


To apply the Stieltjes inversion formula, we first use the continuous extension of $G_0$ (which is analytic on $\CC_+$) to the maximal possible subset of $\CC_+ \cup \RR$~\cite[Definition~5.3]{AD2025}.
\begin{definition}
\label{def:theExtensionOfGmu}
Let 
\begin{equation*}
\gamma_0 = \frac{2}{e^{\lfrac{2 \sqrt{3} \pi}{9}} - 1}
\end{equation*}
and let $G \colon \CC_{+} \cup \big( \RR \setminus \{ \mp \sqrt{2+\gamma_{0}} \} \big) \to \CC$ be given by
\begin{equation}
\label{equation:theExtensionOfGmu}
G(z) =
\begin{cases}
\dfrac{1}{z T^{-1}(W(z))} & \text{for $\Re z \geq 0$, $z \neq 0$, $z \neq \sqrt{2}$,} \\
-\dfrac{\sqrt{2} i}{2} e^{\sqrt{3} \pi / 9} & \text{for $z = 0$,} \\
\dfrac{\sqrt{2} - \sqrt{6}i}{4} & \text{for $z = \sqrt{2}$,}
\end{cases}
\end{equation}
and by $G(z) = -\complexAdjoint{G(-\complexAdjoint{z})}$, for $\Re z < 0$.
\end{definition}

The result below was also proven in~\cite{AD2025} (see the proof of Theorem~5.4).
\begin{theorem}
\label{thm:propertiesOfG}
The function $G$ has the following properties:
\begin{enumerate}
    \item it is a continuous extension of $x \mapsto G_{0}(x)$ ($\lambda = 0$ in~\eqref{eq:CSTreal}),
    \item its restriction to $\CC_{+}$ is analytic and is the Cauchy--Stieltjes transform of the standard V-monotone Gaussian distribution, which is supported on $[-\sqrt{2+\gamma_0}, \sqrt{2+\gamma_0}]$,
    \item its limit as $z \to \mp \sqrt{2+\gamma_0}$ is the complex $\infty$.
\end{enumerate}
\end{theorem}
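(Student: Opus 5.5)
The plan is to reduce every assertion to properties of the two building blocks $W$ and $T^{-1}$. For these we take from~\cite{AD2025} that $T^{-1}$ is a continuous extension of $T_0^{-1}$ to the strip $D$, analytic on its interior (\cite[Theorem~3.4]{AD2025}), and we use Proposition~\ref{proposition:domainCutStripANDhalfOfLogarithm}.

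\textbf{Step 1 (agreement with $G_0$ for large real $x$).} For real $x$ with $x^2-2\ge\gamma_0$ we have
\begin{equation*}
1\le 1+\frac{2}{x^2-2}\le e^{2\sqrt3\pi/9}.
\end{equation*}
Hence $W(x)\in[0,\sqrt3\pi/9]$, which lies in the real part of $D$ where $T^{-1}=T_0^{-1}$. Therefore $G(x)=1/(xT_0^{-1}(W(x)))$, which is exactly~\eqref{eq:CSTreal} with $\lambda=0$, i.e.\ $G(x)=G_0(x)$ for $x>\sqrt{2+\gamma_0}$. The case $x<-\sqrt{2+\gamma_0}$ follows from the reflection $G(z)=-\overline{G(-\overline z)}$, because $\mu_0$ is symmetric (all odd moments vanish by Corollary~\ref{corollary:exactMomentGaussian}).

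\textbf{Step 2 (well-definedness and analyticity on $\CC_+$).} First I check that $W$ maps $\{\Re z\ge0\}\cap(\CC_+\cup\RR)$ into $D$. Write
\begin{equation*}
1+\frac{2}{z^2-2}=\frac{z^2}{z^2-2}.
\end{equation*}
For $z$ in the closed first quadrant, $z^2$ lies in the closed upper half-plane. Then $\principalArgument z^2\le\principalArgument(z^2-2)$, so the argument of the quotient lies in $[-\pi,0]$, giving $\Im W\in[-\pi/2,0]$. The branch on $(0,\sqrt2)$ is chosen precisely to realize the value $-\pi/2$ continuously. Thus $T^{-1}\circ W$ is continuous there and analytic for $z\in\CC_+$ with $\Re z>0$.

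Next I must show $T^{-1}(W(z))\neq0$ away from $z=\pm\sqrt{2+\gamma_0}$. On the real part, $T_0^{-1}(w)=0$ iff $w=T_0(0)=\sqrt3\pi/9$, i.e.\ $z^2=2+\gamma_0$. Off the reals, I would use the explicit formula for $g(\eta,\xi)$ to show $g\neq0$ on $\Theta$, by comparing real and imaginary parts of $g-\frac{1+\sqrt3 i}{2}$.

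Analyticity across the imaginary axis comes from the reflection definition together with matching values on $i\RR_+$. On $i\RR_+$, $W$ is real, so $T^{-1}(W)$ is real and $zT^{-1}(W)$ is purely imaginary, which makes the two definitions agree. Since $\overline{G(-\overline z)}$ is analytic wherever $G$ is, Morera's theorem (or Schwarz reflection) then gives analyticity on all of $\CC_+$. Continuity at the exceptional points $0$ and $\sqrt2$ is checked by computing the limits of $zT^{-1}(W(z))$; these produce the constants in Definition~\ref{def:theExtensionOfGmu}.

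\textbf{Step 3 (identification and support).} The Cauchy transform $G_{\mu_0}$ of the compactly supported $\mu_0$ is analytic on $\CC\setminus\supp\mu_0$ and equals $G_0$ on large real $x$. By Step 1, $G$ and $G_{\mu_0}$ coincide on a real half-line. The function $G$ also extends analytically across $(\sqrt{2+\gamma_0},\infty)$, because $W$ is real-analytic there and $T_0^{-1}$ is analytic on $(-\infty,\sqrt3\pi/9)$. Hence the identity theorem on the connected domain $\CC_+\cup(\sqrt{2+\gamma_0},\infty)\cup\CC_-$ (with $\CC_-$ obtained by conjugation) gives $G=G_{\mu_0}$ on $\CC_+$.

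By Stieltjes inversion, the density of $\mu_0$ is $-\frac1\pi\Im G(x+i0)$. This vanishes for $|x|>\sqrt{2+\gamma_0}$, where $G$ is real and finite, so $\supp\mu_0\subseteq[-\sqrt{2+\gamma_0},\sqrt{2+\gamma_0}]$. Equality follows from the blow-up in item~(3): $T^{-1}(W(z))\to0$ as $z\to\pm\sqrt{2+\gamma_0}$ by continuity, so $|G|\to\infty$, which cannot happen at a point outside the support.

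\textbf{Main obstacle.} I expect the hardest step to be the nonvanishing of $T^{-1}\circ W$ on $\CC_+$ and at the boundary points other than $\pm\sqrt{2+\gamma_0}$. This requires delicate handling of the parametrization $(\eta,\xi)\mapsto H(\eta,\xi)$ and of the ceiling term in $H$. I would attack it first through the explicit formula for $g$.
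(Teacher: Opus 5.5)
Since the paper itself only defers this statement to \cite{AD2025} (proof of Theorem~5.4), your reconstruction has to stand on its own. Most of it does. Step~1 works, as do the mapping of the closed first quadrant into $D$, the reflection across $i\RR_{+}$, the identification $G=G_{\mu_0}$ by continuation across a real half-line, the inclusion $\supp\mu_0\subseteq[-\sqrt{2+\gamma_0},\sqrt{2+\gamma_0}]$, and the blow-up in item~(3).

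\textbf{The gap: equality of the support.} The genuine gap is the final claim of Step~3. The blow-up of $G$ at $\pm\sqrt{2+\gamma_0}$ only shows that these two endpoints lie in $\supp\mu_0$. A closed subset of the interval containing both endpoints may still have gaps, so equality of the support does not follow. What you need is $\Im G(x)<0$ for every $\lvert x\rvert<\sqrt{2+\gamma_0}$, so that the density $-\frac{1}{\pi}\Im G(x)$ is strictly positive on the open interval. This is exactly what the paper later uses, via \cite[Lemma~3.8]{AD2025}, in the proof of Proposition~\ref{proposition:CSTextended}.

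It follows from the explicit formula for $g$. Put $\rho=e^{-\sqrt3(\eta+\xi)}$; on $\Theta$ we have $\xi<-\eta$, so $\rho>1$, and one computes
\begin{equation*}
\Im g(\eta,\xi)=\frac{\sqrt3\,\bigl(\rho^2-2\rho\cos\xi+1\bigr)}{2\,(\rho^2-1)}>0 .
\end{equation*}
For $x\in(0,\sqrt{2+\gamma_0})\setminus\{\sqrt2\}$, the point $W(x)$ lies in $(\RR-i\pi/2)\cup(\sqrt3\pi/9,\infty)\subseteq D\setminus(-\infty,\sqrt3\pi/9]$. Hence $T^{-1}(W(x))=g(\eta,\xi)$ and $\Im G(x)=-\Im g/(x\lvert g\rvert^2)<0$. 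The prescribed values at $0$ and $\sqrt2$ have negative imaginary part, and evenness of $\Im G$ covers $x<0$.

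\textbf{Where the real difficulty lies.} The same one-line computation removes what you call the main obstacle. $\Im g>0$ on all of $\Theta$ already gives $T^{-1}(w)\neq0$ for every $w\in D\setminus(-\infty,\sqrt3\pi/9]$, with no need to analyse $H$ or its ceiling term. Where $H$ genuinely enters is the step you only assert: continuity at $z=0$ and $z=\sqrt2$. There $\Re W(z)\to-\infty$, respectively $+\infty$, and you must control $T^{-1}(w)$ as $\Re w\to\mp\infty$ inside $D$. For instance, the prescribed value $G(\sqrt2)=\frac{\sqrt2-\sqrt6 i}{4}$ amounts to $T^{-1}(w)\to\frac{1+\sqrt3 i}{2}$ as $\Re w\to+\infty$. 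This requires knowing how $H^{-1}(w)$ behaves near the two ends of the strip, and that is where the real work in \cite{AD2025} lies.

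\textbf{A small repair in Step~3.} Before the support is known, $G_{\mu_0}$ is a priori analytic only off some $[-R_0,R_0]$ (e.g.\ $R_0=4$, from the Catalan bound behind~\eqref{eq:radiusOfConvergenceExact}). Also, \eqref{eq:CSTreal} is only known for large $x$. So apply the identity theorem on $\CC_{+}\cup(R_0,\infty)\cup\CC_{-}$ with $R_0$ large, not on the half-line starting at $\sqrt{2+\gamma_0}$. The conclusion is unchanged.
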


We now find the Cauchy--Stieltjes transform of the limit distribution of $S_N(\lambda)$, $N \to \infty$.
\begin{proposition}
\label{proposition:CST}
The Cauchy--Stieltjes transform of the measure $\mu_{\lambda}$, $\lambda \in \RR$, has the form
\begin{equation}
\label{eq:CST}
G_{\lambda}(z) = \frac{G(z-\lambda)}{1 + \lambda G(z-\lambda)} \text{}
\end{equation}
for $z \in \CC_{+}$.
\end{proposition}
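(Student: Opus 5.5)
The plan is to obtain \eqref{eq:CST} from the real-variable identity \eqref{eq:CST0} and then extend it to all of $\CC_{+}$ by the identity theorem. First I will derive \eqref{eq:CST0} directly from \eqref{eq:CSTreal}. Write $w = x-\lambda$. Setting $\lambda=0$ in \eqref{eq:CSTreal} gives $1/G_0(w) = w\,T_0^{-1}\big(\tfrac12\ln(1+\tfrac{2}{w^2-2})\big)$. Substituting this into \eqref{eq:CSTreal} yields $1/G_\lambda(x) = \lambda + 1/G_0(x-\lambda)$, which is exactly \eqref{eq:CST0}, valid for real $x$ large enough.

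Next, Theorem~\ref{thm:propertiesOfG} says that $G$ is a continuous extension of $G_0$ and that its restriction to $\CC_+$ is the Cauchy--Stieltjes transform of the standard V-monotone Gaussian law. Since that law is compactly supported, this transform is analytic on $\CC\setminus[-\sqrt{2+\gamma_0},\sqrt{2+\gamma_0}]$, in particular near large real points. On the other side, $\mu_\lambda$ is compactly supported (as noted before \eqref{eq:CSTreal}), so $G_\lambda$ is analytic on $\CC\setminus\supp\mu_\lambda$, which contains a neighbourhood of $(R,\infty)$ for $R$ large.

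Define $F(z) = G(z-\lambda)/(1+\lambda G(z-\lambda))$ on $\CC_+$. The numerator is analytic there. The denominator does not vanish on $\CC_+$: since $\Im G(z-\lambda)<0$ for $z\in\CC_+$ (as for any Cauchy transform of a probability measure), $1+\lambda G(z-\lambda)$ has imaginary part $\lambda\Im G(z-\lambda)$, which is nonzero for $\lambda\neq0$. The case $\lambda=0$ is trivial. Hence $F$ is analytic on $\CC_+$.

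To compare $F$ with $G_\lambda$, I will work on the connected domain $\Omega = \CC_+ \cup (R,\infty)\cup \CC_-$ for $R$ large, using the Cauchy-transform form of $G$. Both $G_\lambda$ and $F$ are analytic on $\Omega$: for $F$ this holds because for large real $x$ we have $G_0(x-\lambda)\approx 1/(x-\lambda)$ small, so the denominator is nonzero near $(R,\infty)$. By \eqref{eq:CST0} the two functions agree on $(R,\infty)$, a set with accumulation points. The identity theorem then gives $G_\lambda=F$ on $\CC_+$.

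The main obstacle is the care needed in identifying the analytic objects. The formula \eqref{eq:CSTreal} is derived only for small real $x$ in the moment generating function, that is, for large real arguments of $G_\lambda$. Its extension to $\CC_+$ rests entirely on $G|_{\CC_+}$ coinciding with the genuine Cauchy transform of the Gaussian law, which is part~(2) of Theorem~\ref{thm:propertiesOfG} and is taken as given. The nonvanishing of the denominator is the only extra check needed, and it is handled by the sign of $\Im G$.
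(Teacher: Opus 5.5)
Your proposal is correct and follows the same route as the paper. The paper argues that $1+\lambda G(z-\lambda)$ does not vanish because $\Im G<0$ on $\CC_+$, and then obtains the assertion by combining Theorem~\ref{thm:propertiesOfG} with \eqref{eq:CST0}. You do the same, but make explicit two steps the paper leaves implicit: the one-line derivation of \eqref{eq:CST0} from \eqref{eq:CSTreal}, and the identity-theorem argument. One small point: if you use $\Omega\supseteq\CC_-$, also note that the denominator is nonzero on $\CC_-$ since $\Im G_{\mu_0}>0$ there. Alternatively, take $\Omega=\CC_+$ together with a small disc around one large real point.
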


\begin{proof}
Since $G$ restricted to $\CC_{+}$ is the Cauchy--Stieltjes transform of $\mu_0$, its imaginary part is negative and therefore, since $\lambda \in \RR$, the RHS of~\eqref{eq:CST} is well-defined. Combining Theorem~\ref{thm:propertiesOfG} with~\eqref{eq:CST0}, we get our assertion.
\end{proof}

\begin{corollary}
\label{col:lambdasRestriction}
The family of measures $\mu_{\lambda}$, $\lambda \in \RR$, fulfills the following condition
\begin{equation*}
\mu_{-\lambda}(A) = \mu_{\lambda}(-A) \text{,}
\end{equation*}
for any Borel $A \subseteq \RR$, where $-A = \{ -a: a \in A \}$.
\end{corollary}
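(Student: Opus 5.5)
We will show that the image of $\mu_{\lambda}$ under the reflection $x \mapsto -x$ equals $\mu_{-\lambda}$. Since all these measures are compactly supported, it suffices to compare moments, i.e.\ to prove $\momentLimit{n}{-\lambda} = (-1)^n \momentLimit{n}{\lambda}$ for every $n \in \NN$. The measure $A \mapsto \mu_{\lambda}(-A)$ has $n$th moment $(-1)^n \momentLimit{n}{\lambda}$. Both it and $\mu_{-\lambda}$ are compactly supported, so they are determined by their moments (or, equivalently, by their Cauchy--Stieltjes transforms on $\CC_{+}$). Hence equality of moments gives $\mu_{-\lambda}(A) = \mu_{\lambda}(-A)$ for all Borel sets $A$.

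For the moment identity, we invoke Theorem~\ref{thm:limitMomentFormula}. By~\eqref{eq:limitMomentFormula}, $\momentLimit{n}{\lambda}$ is a polynomial in $\lambda$, and only the powers $\lambda^{n-2k}$ appear. The coefficients $\card{\vNoncrk{2+}(n,k)}/k!$ do not depend on $\lambda$. Replacing $\lambda$ by $-\lambda$ multiplies each term by $(-1)^{n-2k} = (-1)^n$, which yields $\momentLimit{n}{-\lambda} = (-1)^n \momentLimit{n}{\lambda}$ immediately.

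As a cross-check at the level of transforms, one can start from Proposition~\ref{proposition:CST}. Theorem~\ref{thm:propertiesOfG} gives $G(-\complexAdjoint{w}) = -\complexAdjoint{G(w)}$, because $\mu_0$ is symmetric (this is also built into Definition~\ref{def:theExtensionOfGmu}). Then for $z \in \CC_{+}$,
\begin{equation*}
G_{-\lambda}(z) = \frac{G(z+\lambda)}{1 - \lambda G(z+\lambda)} = -\complexAdjoint{\left(\frac{G(-\complexAdjoint{z}-\lambda)}{1 + \lambda G(-\complexAdjoint{z}-\lambda)}\right)} = -\complexAdjoint{G_{\lambda}(-\complexAdjoint{z})},
\end{equation*}
which is exactly the Cauchy--Stieltjes transform of the reflected measure. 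Stieltjes inversion then concludes the argument.

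No step is a real obstacle here. The only point needing care is to justify that moment determinacy, or uniqueness via the Cauchy--Stieltjes transform, applies; this follows from the compactness of supports noted at the start of Section~\ref{section:limitMeasure}.
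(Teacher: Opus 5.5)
Your proof is correct, but your main argument takes a different route from the paper. The paper works only with Cauchy--Stieltjes transforms. It writes $G = f_1 + i f_2$ and uses that $f_1$ is odd and $f_2$ is even in $x$, which is built into Definition~\ref{def:theExtensionOfGmu}. From Proposition~\ref{proposition:CST} it then checks $\Im G_{-\lambda}(x+iy) = \Im G_{\lambda}(-x+iy)$, and finally lets $y \to 0^{+}$ in the Stieltjes inversion formula against bounded continuous test functions. That is essentially your ``cross-check'', except that you record the full complex identity $G_{-\lambda}(z) = -\overline{G_{\lambda}(-\overline{z})}$ instead of its imaginary part only.

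Your primary argument instead reads the parity $m_n(-\lambda) = (-1)^n m_n(\lambda)$ directly off the polynomial structure in Theorem~\ref{thm:limitMomentFormula}, and concludes by moment determinacy of compactly supported measures. This is more elementary. It needs neither the transform formula of Proposition~\ref{proposition:CST} nor any property of the delicate analytic extension of $G$; it uses only the combinatorial moment formula and compactness of supports. The same reasoning applied to Theorem~\ref{thm:exactMomentFormula} also shows that $S_N(-\lambda)$ and $-S_N(\lambda)$ have the same distribution for every finite $N$, not just in the limit.

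What the paper's route buys is a pointwise relation between the transforms, and hence between the densities of the absolutely continuous parts. That fits the machinery used right afterwards, where the paper restricts to $\lambda > 0$ and computes $\mu_{\lambda}$ explicitly. For the corollary as stated, though, your moment argument is the cleaner proof.
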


\begin{proof}
We use the Stieltjes inversion formula. Fix $\lambda$ and let $f_1, f_2 \colon \RR \times (0, \infty) \to \RR$ be the real and the imaginary part of $(x, y) \mapsto G(x+iy)$, $x \in \RR$, $y > 0$, respectively. By the definition of $G$, we have
\begin{equation*}
f_1(-x, y) = - f_1(x,y) \quad \text{and} \quad f_2(-x,y) = f_2(x,y) \text{}
\end{equation*}
for any $x + iy \in \CC_{+}$. Using this fact, we get
\begin{equation*}
\begin{split}
\Im G_{-\lambda}(x+iy)
& = \frac{f_2(x+\lambda,y)}{(1-\lambda f_1(x+\lambda,y))^2 + \lambda^2 f_2^2(x+\lambda,y)} \\
& = \frac{f_2(-x-\lambda,y)}{(1+\lambda f_1(-x-\lambda,y))^2 + \lambda^2 f_2^2(-x-\lambda,y)} = \Im G_{\lambda}(-x+iy) \text{.}
\end{split}
\end{equation*}
From the above equality, for any continuous bounded $\phi \colon \RR \to \RR$, we have
\begin{equation*}
\begin{split}
\int_{-\infty}^\infty \phi(x) \mu_{-\lambda}(\diff x)
& = -\frac{1}{\pi} \lim_{y \to 0^+} \int_{-\infty}^\infty \phi(x) \cdot \Im G_{-\lambda}(x+iy) \diff x \\
& = -\frac{1}{\pi} \lim_{y \to 0^+} \int_{-\infty}^\infty \phi(-x) \cdot \Im G_{\lambda}(x+iy) = \int_{-\infty}^\infty \phi(-x) \mu_{\lambda}(\diff x) \text{,}
\end{split}
\end{equation*}
which leads to the desired conclusion.
\end{proof}

Due to the above corollary, from now on we will consider $\lambda > 0$ only. Recall that $\mu_0$ is the standard V-monotone Gaussian distribution, established in~\cite{AD2025}. In order to use the Stieltjes inversion formula, using~\eqref{eq:CST}, we extend $G_{\lambda}$ continuously to the maximal possible subset of $\CC_{+} \cup \RR$.
\begin{proposition}
\label{proposition:CSTextended}
For $\lambda > 0$, the equation
\begin{equation}
\label{eq:CSTdenominator}
1 + \lambda G(x) = 0 \text{,} \qquad x \in \RR \setminus \{ \mp \sqrt{2+\gamma_0} \} \text{}
\end{equation}
has exactly one solution $a(\lambda) < -\sqrt{2+\gamma_0}$.

\end{proposition}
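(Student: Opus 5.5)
The plan is to split $\RR \setminus \{\mp\sqrt{2+\gamma_0}\}$ into three regions and treat each separately: the interior $(-\sqrt{2+\gamma_0}, \sqrt{2+\gamma_0})$, the right half-line $(\sqrt{2+\gamma_0}, \infty)$, and the left half-line $(-\infty, -\sqrt{2+\gamma_0})$.

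\textbf{Outside the support.} Here $G$ coincides with the real Cauchy--Stieltjes transform of $\mu_0$. This follows from Theorem~\ref{thm:propertiesOfG}: $G$ is a continuous extension of $G_0$, and $\mu_0$ is supported on $[-\sqrt{2+\gamma_0},\sqrt{2+\gamma_0}]$. Hence for $|x| > \sqrt{2+\gamma_0}$,
\begin{equation*}
G(x) = \int \frac{\mu_0(\diff t)}{x-t}
\end{equation*}
is real. Its sign equals the sign of $x$, and $G'(x) = -\int (x-t)^{-2}\mu_0(\diff t) < 0$.
\begin{itemize}
\item On $(\sqrt{2+\gamma_0},\infty)$ we have $G>0$. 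Since $\lambda>0$, this gives $1+\lambda G(x) > 1$, so there are no solutions.
\item On $(-\infty,-\sqrt{2+\gamma_0})$, the function $G$ is negative and strictly decreasing. It tends to $0^-$ as $x\to-\infty$ and to $-\infty$ as $x \to -\sqrt{2+\gamma_0}^-$; the latter uses item~(3) of Theorem~\ref{thm:propertiesOfG} together with the sign and monotonicity just established.
\end{itemize}
By the intermediate value theorem and strict monotonicity, $G(x) = -1/\lambda$ has exactly one solution $a(\lambda)$ on the left half-line.

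\textbf{Inside the support.} On $(-\sqrt{2+\gamma_0},\sqrt{2+\gamma_0})$ I must show that $1+\lambda G(x)\ne 0$. A zero is possible only where $G(x)$ is real, so it suffices to show $\Im G(x) < 0$ at every interior point, including $x = 0$ and $x = \pm\sqrt 2$. At $0$ and $\pm\sqrt2$ the explicit values in Definition~\ref{def:theExtensionOfGmu} have nonzero imaginary part; for $-\sqrt2$ this follows from the symmetry $G(z) = -\overline{G(-\overline z)}$.

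For the remaining points, $\Im G(x)$ equals $-\pi$ times the density of $\mu_0$, by the Stieltjes inversion formula applied to the continuous extension. So the claim amounts to strict positivity of the V-monotone Gaussian density on the open support interval, which I would take from~\cite{AD2025}. If I had to argue it directly, I would do so from $G(x) = 1/(x T^{-1}(W(x)))$: for $x \in (0,\sqrt{2+\gamma_0})$, $W(x)$ lies in the strip $D$ but off the real half-line where $T^{-1} = T_0^{-1}$ is real, and $T^{-1}$ takes nonreal values there because $g(\eta,\xi)$ is nonreal for $(\eta,\xi)$ in the interior of $\Theta$. The negative $x$ then follow by symmetry.

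\textbf{Main obstacle.} I expect the interior step to be the hardest: establishing that $T^{-1}(W(x))$ is nonreal on the whole open interval. This requires the fine description of $T^{-1}$ from~\cite{AD2025}. I would first try to quote positivity of the density of $\mu_0$ on the interior of its support from that paper, and fall back on the analysis of $g$ only if needed. A smaller point to check is the limit $G(x)\to-\infty$ at the left endpoint, which follows from item~(3) of Theorem~\ref{thm:propertiesOfG} combined with reality and negativity of $G$ there.
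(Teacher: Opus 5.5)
Your proof is correct, and on the two half-lines it takes a different route from the paper. The paper uses the explicit formula $G(x)=1/(x\,T_0^{-1}(W(x)))$ on $(\sqrt{2+\gamma_0},\infty)$. There $W$ decreases from $\sqrt3\pi/9$ to $0$, and $T_0^{-1}$ is decreasing with $T_0^{-1}(\sqrt3\pi/9)=0$ and $T_0^{-1}(0)=1$. Hence $G$ decreases from $+\infty$ to $0$, and oddness transfers this to the left half-line. You instead identify $G(x)$ with $\int\mu_0(\diff{t})/(x-t)$ off the interval. This is legitimate by continuity of $G$ and analyticity of the Cauchy transform off the support. You then read off reality, sign, strict monotonicity and decay from the integral, importing only the endpoint blow-up from item~(3) of Theorem~\ref{thm:propertiesOfG}. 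Your argument is softer: it uses nothing specific to $\mu_0$ beyond its support and that blow-up. The paper instead derives the endpoint limit by direct computation. You are also more careful at $x=0,\pm\sqrt2$. The paper's description of $W$ on $(0,\sqrt{2+\gamma_0})$ silently skips $\sqrt2$, where $W$ is undefined and $G$ is given by a separate value.

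On the interior, your fallback is the paper's argument (the paper quotes $\Im g>0$ from \cite[Lemma~3.8]{AD2025}), but you place the parameters wrongly. For $x\in(0,\sqrt2)$ one has $\Im W(x)=-\pi/2$. For $x\in(\sqrt2,\sqrt{2+\gamma_0})$, $W(x)$ is real and lies in $(\sqrt3\pi/9,\infty)$. Since $\Im H(\eta,\xi)=\eta/2$, the corresponding $(\eta,\xi)$ lie on the edges $\eta=-\pi$ and $\eta=0$ of $\Theta$, never in its interior. So nonreality of $g$ on the interior of $\Theta$ would cover none of the points you need. The repair is one line: with $r=e^{-\sqrt3(\eta+\xi)}>1$ (as $\xi<-\eta$),
\[
\Im g(\eta,\xi)=\frac{\sqrt3\,\lvert re^{i\xi}-1\rvert^{2}}{2(r^{2}-1)}>0
\]
on all of $\Theta$, edges included. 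This gives $\Im G(x)<0$ for such $x>0$, and the reflection $G(z)=-\overline{G(-\overline{z})}$ handles $x<0$.

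Your primary route, quoting strict positivity of the density of $\mu_0$, is the same fact in disguise. Knowing that $\supp\mu_0$ is the full interval would not rule out isolated zeros of the continuous density. So pointwise positivity has to come from this computation (or the cited lemma) anyway.
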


\begin{proof}
From Definition~\ref{def:theExtensionOfGmu}, we see that $x \mapsto \Re G(x + iy)$ is an odd function for any $y \geq 0$, whereas $x \mapsto \Im G(x + iy)$ is even. Let us first show that the function $\Im G$ is negative on $(-\sqrt{2+\gamma_0}, \sqrt{2+\gamma_0})$, which implies that~\eqref{eq:CSTdenominator} has no solutions on this interval.

By~\eqref{equation:theExtensionOfGmu}, we have $\Im G(0) < 0$ and therefore we consider $0 < x < \sqrt{2+\gamma_0}$ only. The reader can check that the image of this interval under $W$ is $(\RR - i \pi /2) \cup (\sqrt{3}\pi/9, \infty)$, which follows directly from~\eqref{equation:halfOfLogarithm}. That means $T(W(x)) = g(\eta, \xi)$ for some $(\eta, \xi) \in \Theta$, see~\eqref{equation:theInverseOfT}. From~\cite[Lemma~3.8]{AD2025}, we have $\Im g(\eta, \xi) > 0$, hence $\Im G(x) < 0$ for $x \in (-\sqrt{2+\gamma_0}, \sqrt{2+\gamma_0})$.

Let us first show that $G$ is strictly decreasing for $x > \sqrt{2+\gamma_0}$ and takes all the values from $(0, \infty)$. First, note that the image of $(\sqrt{2+\gamma_0}, \infty)$ under $W$ is $(0, \lfrac{\sqrt{3} \pi}{9})$, which follows directly from the definitions of $W$ and $\gamma_0$, and $T^{-1}(W(x)) = T_{0}^{-1}(W(x))$. Moreover, $x \mapsto W(x)$ is decreasing for $x > \sqrt{2}$. Since $T_{0}$ is decreasing by the definition, so is $T_{0}^{-1}$, therefore $x \mapsto G(x)$ is decreasing by~\eqref{equation:theExtensionOfGmu}. We have $T_0(0) = \lfrac{\sqrt{3} \pi}{9}$ and $T_0(1) = 0$, $T_0$ is decreasing and continuous, therefore the image of the increasing function $x \mapsto T_0^{-1}(W(x))$ under $(\sqrt{2+\gamma_0}, \infty)$ is $(0, 1)$, which implies that
\begin{equation}\label{eq:limitG}
G(x) \to 
\begin{cases}
\infty & \text{as $x \to (\sqrt{2+\gamma_0})^{+}$,} \\
0 & \text{as $x \to \infty$,}
\end{cases}
\end{equation}
i.e., the image of $(\sqrt{2+\gamma_0}, \infty)$ under $G$ is $(0, \infty)$. This implies two facts. First,~\eqref{eq:CSTdenominator} has no solutions for $x > \sqrt{2+\gamma_0}$, since $\lambda > 0$. Moreover, the function $G$ restricted to $\RR \setminus [-\sqrt{2+\gamma_0}, \sqrt{2+\gamma_0}]$ is a real-valued odd function, therefore it is strictly decreasing for $x < - \sqrt{2+\gamma_0}$ and the image of this interval is $(-\infty, 0)$. From this, we have $G(x) = -\lfrac{1}{\lambda}$ has exactly one solution $x = a(\lambda) < -\sqrt{2 + \gamma_0}$ for $\lambda > 0$, which completes the proof.
\end{proof}

\begin{corollary}
The injective function $a \colon (0, \infty) \to (-\infty, -\sqrt{2+\gamma_0})$ which is defined as the solution of~\eqref{eq:CSTdenominator} fulfills the following equation:
\begin{equation}
\label{eq:equationFor_a}
\frac{\sqrt{3}}{2} \ln \left( \frac{\lambda^2 + \lambda a + a^2}{a^2-2} \right) = \arctan \left( \frac{a+2\lambda}{\sqrt{3}a} \right) + \frac{\pi}{6} \text{,} \quad a < - \sqrt{2+\gamma_0} \text{,}
\end{equation}
where $a \coloneqq a(\lambda)$.
\end{corollary}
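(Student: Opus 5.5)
The plan is to turn the defining equation $1+\lambda G(a)=0$ of Proposition~\ref{proposition:CSTextended} into an equation for the real function $T_0$, using the symmetry of $G$ and the explicit formula~\eqref{equation:theRealVersionOfT}.

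First, injectivity. By Proposition~\ref{proposition:CSTextended}, $G$ restricted to $(-\infty,-\sqrt{2+\gamma_0})$ is real, strictly decreasing and maps onto $(-\infty,0)$. Since $a(\lambda)$ is the unique solution of $G(a)=-1/\lambda$, we have $\lambda=-1/G(a(\lambda))$. Hence $a(\lambda_1)=a(\lambda_2)$ forces $\lambda_1=\lambda_2$, so $a$ is injective (in fact a bijection onto $(-\infty,-\sqrt{2+\gamma_0})$).

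Next, I rewrite the condition on $a$. For $a<-\sqrt{2+\gamma_0}$, Definition~\ref{def:theExtensionOfGmu} gives $G(a)=-\overline{G(-a)}=-G(-a)$, because $G$ is real on $(\sqrt{2+\gamma_0},\infty)$. There we also have $G(-a)=\bigl((-a)\,T_0^{-1}(W(-a))\bigr)^{-1}$, since $W(-a)\in(0,\sqrt3\pi/9)$ lies in the real domain where $T^{-1}=T_0^{-1}$. So $1+\lambda G(a)=0$ is equivalent to $(-a)\,T_0^{-1}(W(-a))=\lambda$. Setting $s\coloneqq -\lambda/a$, this reads $T_0^{-1}(W(-a))=s$. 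As shown in the proof of Proposition~\ref{proposition:CSTextended}, $s\in(0,1)$, so applying $T_0$ is legitimate and gives
\begin{equation*}
T_0(s)=W(-a)=\tfrac12\ln\Bigl(\frac{a^2}{a^2-2}\Bigr).
\end{equation*}

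Finally, I substitute $s=-\lambda/a$ into~\eqref{equation:theRealVersionOfT}. The logarithmic term uses
\begin{equation*}
s^2-s+1=\frac{\lambda^2+\lambda a+a^2}{a^2},
\end{equation*}
so the two logarithms combine into $\tfrac12\ln\bigl((\lambda^2+\lambda a+a^2)/(a^2-2)\bigr)$. For the arctangent term,
\begin{equation*}
\frac{2s-1}{\sqrt3}=-\frac{a+2\lambda}{\sqrt3\,a},
\end{equation*}
and by oddness of $\arctan$ the equation becomes
\begin{equation*}
\frac{\sqrt3}{3}\Bigl(\arctan\Bigl(\frac{a+2\lambda}{\sqrt3\,a}\Bigr)+\frac{\pi}{6}\Bigr)=\frac12\ln\Bigl(\frac{\lambda^2+\lambda a+a^2}{a^2-2}\Bigr).
\end{equation*}
Multiplying by $\sqrt3$ yields~\eqref{eq:equationFor_a}.

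The only delicate step is the second one. I must correctly justify that $G$ is real for $x<-\sqrt{2+\gamma_0}$, with the sign $G(a)=-G(-a)$, and that $T_0^{-1}(W(-a))\in(0,1)$ so that $T_0$ may be applied. Both facts were already established in the proof of Proposition~\ref{proposition:CSTextended}; the rest is routine algebra, where care is needed only with the signs coming from $a<0$.
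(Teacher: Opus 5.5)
Your proposal is correct and follows essentially the same route as the paper. Both arguments use the oddness of $G$ on $\RR\setminus[-\sqrt{2+\gamma_0},\sqrt{2+\gamma_0}]$ to rewrite $1+\lambda G(a)=0$ as $T_0^{-1}(W(-a))=-\lambda/a$, then apply $T_0$ and expand using the explicit formula~\eqref{equation:theRealVersionOfT}. You additionally write out the algebra and the injectivity of $a$ (via $\lambda=-1/G(a(\lambda))$), both of which the paper leaves implicit.
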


\begin{proof}
Fix $\lambda > 0$. The implicit form~\eqref{eq:equationFor_a} of $a$ follows directly from~\eqref{eq:CSTdenominator}. Since $G$ restricted to $\RR \setminus [-\sqrt{2+\gamma_0}, \sqrt{2+\gamma_0}]$ is an odd function and $a < -\sqrt{2+\gamma_0}$, we have
\begin{equation*}
-\frac{\lambda}{a}= \frac{1}{a G(a)} = -\frac{1}{a G(-a)} = T_0^{-1}(W(-a)) \text{,}
\end{equation*}
i.e., $T_0 ( -\lfrac{\lambda}{a} ) = W(-a)$, which is equivalent to~\eqref{eq:equationFor_a}.
\end{proof}

\begin{lemma}
\label{lemma:residueValue}
The Cauchy--Stjeltjes transform of $\mu_{\lambda}$ defined on $\CC \setminus \supp \mu_{\lambda}$ has one simple pole $z = a + \lambda$. Moreover, the residue of the transform at the pole is
\begin{equation}
\label{eq:atomWeight}
C_{\lambda} = 
\frac{\lambda \left(2 - a^{2}(\lambda) \right)}{\lambda^2a+2\lambda+2a} \text{}
\end{equation}

with $0 < C_{\lambda} < 1$.
\end{lemma}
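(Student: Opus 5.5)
The plan is to work directly from Proposition~\ref{proposition:CST}, $G_{\lambda}(z) = G(z-\lambda)/(1+\lambda G(z-\lambda))$, and to locate the zeros of the denominator; by Corollary~\ref{col:lambdasRestriction} we may take $\lambda>0$.

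\textbf{Step 1: the pole.} On $\CC_{+}$ we have $\Im G<0$, so $1+\lambda G(z-\lambda)\neq 0$ there. On $(-\sqrt{2+\gamma_0},\sqrt{2+\gamma_0})$ the proof of Proposition~\ref{proposition:CSTextended} shows $\Im G<0$. On $(\sqrt{2+\gamma_0},\infty)$ we have $G>0$. Hence, by Proposition~\ref{proposition:CSTextended}, the only real zero of $1+\lambda G(x)$ is $x=a=a(\lambda)$, which gives the single candidate pole $z=a+\lambda$. The numerator there equals $G(a)=-1/\lambda\neq 0$. Moreover, $G$ is real-analytic near $a$, since $a$ lies outside $[-\sqrt{2+\gamma_0},\sqrt{2+\gamma_0}]$ and there $G(x)=1/(xT_0^{-1}(W(x)))$ with $T_0$ and $W$ analytic. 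So the pole is simple provided $G'(a)\neq 0$, which Step 2 will show.

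\textbf{Step 2: computing the residue.} The residue is $G(a)/(\lambda G'(a)) = -1/(\lambda^2 G'(a))$. To compute $G'$, write $s(x)=T_0^{-1}(W(x))$, so that $G=1/(xs)$ and $G'=-(s+xs')/(xs)^2$. At $x=a$ we have $xs=-1/G(a)=\lambda$, i.e.\ $s=-\lambda/a$, and therefore the residue equals $1/(s+as'(a))$. For $s'$, use the chain rule. From \eqref{equation:theRealVersionOfTInTheIntegralForm}, $T_0'(s)=-s/(s^2-s+1)$. For $x>\sqrt2$, $W(x)=\tfrac12\ln\bigl(x^2/(x^2-2)\bigr)$, hence $W'(x)=-2/(x(x^2-2))$, and this formula extends to $x<-\sqrt{2}$ by the oddness of $G$. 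Thus
\[
xs'=\frac{2(s^2-s+1)}{s(x^2-2)} .
\]
Substituting $s=-\lambda/a$ and $s^2-s+1=(\lambda^2+\lambda a+a^2)/a^2$ gives
\[
s+as'=-\frac{\lambda^2a+2\lambda+2a}{\lambda(a^2-2)} ,
\]
whose reciprocal is exactly \eqref{eq:atomWeight}. The expression is finite and nonzero, which confirms that $G'(a)\neq 0$ and that the pole is simple.

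\textbf{Step 3: bounds on $C_\lambda$.} For positivity, $G$ is strictly decreasing on $(-\infty,-\sqrt{2+\gamma_0})$ (Proposition~\ref{proposition:CSTextended}), so $G'(a)<0$ and hence $C_\lambda=-1/(\lambda^2G'(a))>0$. Equivalently, since $a^2>2$, the numerator $\lambda(2-a^2)$ is negative, which forces $\lambda^2a+2\lambda+2a<0$.

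For $C_\lambda<1$, a direct algebraic route would need the inequality $\lambda|a|<\lambda^2+2$, which does not follow immediately from \eqref{eq:equationFor_a}. I therefore plan to argue measure-theoretically instead. By the Stieltjes inversion formula, $C_\lambda=\lim_{y\to0^+}(-iy)G_\lambda(a+\lambda+iy)=\mu_\lambda(\{a+\lambda\})\le 1$. Equality would force $\mu_\lambda=\delta_{a+\lambda}$. That is impossible: it would make $\Im G_\lambda$ vanish on the real line away from $a+\lambda$, whereas $\Im G_\lambda(x)=\Im G(x-\lambda)/\lvert 1+\lambda G(x-\lambda)\rvert^2<0$ on $(\lambda-\sqrt{2+\gamma_0},\lambda+\sqrt{2+\gamma_0})$, so the continuous part of $\mu_\lambda$ is nonzero.

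The main obstacle is the justification in Step 3, namely that the residue of the continuous extension really equals the atom mass. This requires controlling $G_\lambda$ uniformly near the real point $a+\lambda$, which I expect to follow from the analyticity of $G$ in a neighbourhood of $a$.
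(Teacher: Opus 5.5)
Your proof is correct. Steps 1--2 match the paper in substance. The paper obtains the residue as $-1/(\lambda^2 G'(-a))$ via Schwarz reflection and L'H\^opital along the real axis, then evaluates $G'$ by the chain rule through $T'$ and $W'$, exactly as you do. Your direct use of the analyticity of $G$ near $a$, which lies outside $\supp \mu_0$, is a slightly cleaner way to see the simple zero of $1+\lambda G$.

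The real difference is the bound $C_\lambda<1$. The paper proves it algebraically. With $C_\lambda=N/D$ one has $N<0$ and $N-D=-a(\lambda^2+\lambda a+2)$. If $\lambda^2+\lambda a+2\le 0$, then $\lambda^2+\lambda a+a^2\le a^2-2$, so the left side of \eqref{eq:equationFor_a} is $\le 0$. This forces $(a+2\lambda)/a\le -1$, i.e.\ $a+\lambda\ge 0$, which contradicts $\lambda(\lambda+a)+2\le 0$. Hence $D<N<0$, which gives $0<C_\lambda<1$ in one stroke. So the inequality $\lambda\lvert a\rvert<\lambda^2+2$ that you set aside does follow from \eqref{eq:equationFor_a} in a few lines, and the paper reuses it in Theorem~\ref{thm:behaviorOfAtom}(3).

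Your route instead identifies $C_\lambda$ with $\mu_\lambda(\{a+\lambda\})$, which the paper only does afterwards in Theorem~\ref{thm:limitMeasure}. You then use that $\mu_\lambda$ is a probability measure with a nontrivial continuous part. This avoids the implicit equation entirely, and once $C_\lambda<1$ and $D<0$ are known it recovers $\lambda^2+\lambda a+2>0$ anyway.

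The obstacle you flag is harmless. Since $G$ is analytic on a disc around $a$, $(z-a-\lambda)G_\lambda(z)\to C_\lambda$ as $z\to a+\lambda$ within $\CC_{+}$. Dominated convergence then gives $iy\,G_\lambda(a+\lambda+iy)\to\mu_\lambda(\{a+\lambda\})$. Excluding $\mu_\lambda=\delta_{a+\lambda}$ is even cheaper: $m_1(\lambda)=0$ and $m_2(\lambda)=1$ already rule out any Dirac mass.

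There are three minor slips to fix.
\begin{itemize}
\item The normalisation is $\lim_{y\to 0^+} iy\,G_\lambda(x_0+iy)=\mu_\lambda(\{x_0\})$, not $-iy$.
\item At $x=a$ one has $xs=1/G(a)=-\lambda$, not $\lambda$. Only $(xs)^2$ enters, so the residue is unaffected.
\item Strict monotonicity only gives $G'(a)\le 0$. The clean justification of both $G'(a)\neq 0$ and $C_\lambda>0$ is that $s=-\lambda/a>0$ and your own formula gives $as'(a)=2(s^2-s+1)/(s(a^2-2))>0$, so $s+as'(a)>0$.
\end{itemize}
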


\begin{proof}
Consider an open disk $D\subseteq \CC_{+}$ centered at $a + \lambda$ with the radius $0 < R \leq -(a+\sqrt{2+\gamma_0})$. Let $f$ be the analytic continuation of
\begin{equation*}
G_{\lambda} \restriction \{ z \in \CC : \Im z \geq 0 \text{ and } 0 < \lvert z - (a+\lambda) \rvert < R \} \text{}
\end{equation*}
to $D \setminus \{ a+\lambda \}$. We can use the Riemann--Schwarz reflection formula to define $f$, since $x \mapsto G(x-\lambda)$ is real-valued and negative for $x<\lambda-\sqrt{2+\gamma_0}$. Since $\lfrac{1}{f}$ is analytic on $D$, $f$ is meromorphic. The function $f$ is of course a restriction of the Cauchy--Stieltjes transform of $\mu_{\lambda}$ (defined on the completion of $\supp \mu_{\lambda}$) and has a pole at $z = a+\lambda$. Let us compute the limit of $(z-a-\lambda) f(z)$ as $z \to a+\lambda$. Due to the meromorphicity of $f$, we can consider only real $z$ and we substitute $x = \lambda-z$. The limit is equal to
\begin{equation}
\label{eq:atomWeight1}
\begin{split}
\lim_{x \to -a} \frac{-(x+a) G(-x)}{1 + \lambda G(-x)} = \lim_{x \to -a} \frac{(x+a) G(x)}{1 - \lambda G(x)} = G(-a) \lim_{x \to -a} -\frac{1}{\lambda G'(x)} = -\frac{1}{\lambda^2 G'(-a)} \text{}
\end{split}\end{equation}
Let us compute the derivative of $G$ for real $x > \sqrt{2 + \gamma_0}$:
\begin{equation*}\begin{split}
G'(x) = \left( \frac{1}{x T^{-1}(W(x))} \right)' = -G^2(x) \left( T^{-1}(W(x)) + \frac{x W'(x)}{T'(T^{-1}(W(x)))} \right) \text{.}
\end{split}\end{equation*}
Using the identity $T^{-1}(W(-a)) = -\lfrac{\lambda}{a}$, we have
\begin{equation*}
\lim_{z \to a+\lambda} (z-a-\lambda) G_{\lambda}(z) = \frac{1}{-\lfrac{\lambda}{a} - a \lfrac{W'(-a)}{T'(-\lfrac{\lambda}{a})}} \text{.}
\end{equation*}
Since
\begin{equation*}
T'(u) = -\frac{u}{u^2 - u + 1} \qquad \text{and} \qquad W'(x) = \frac{2}{x(2-x^2)} \text{,}
\end{equation*}
we have
\begin{equation}
\label{eq:atomWeight2}
\lim_{z \to a+\lambda} (z-a-\lambda) G_{\lambda}(z) = \frac{\lambda \left(2 - a^{2}\right)}{\lambda^2a+2\lambda+2a} = C_{\lambda} \text{.}
\end{equation}
Let $N$ and $D$ be the numerator and denominator of $C_{\lambda}$, respectively. Since $a < -\sqrt{2+\gamma_0}$, we have $N<0$. If it held
\begin{equation*}
N-D = -a ( \lambda^2 + \lambda a + 2 ) \leq 0 \text{,}
\end{equation*}
we would have $\lambda^2 + \lambda a + 2 \leq 0$, i.e., $\lambda^2 + \lambda a + a^2 \leq a^2 - 2$. Since $\lambda^2 + \lambda a + a^2$ and $a^2 - 2$ are always positive, the LHS of~\eqref{eq:equationFor_a} is nonpositive and thus the nonpositivity of the RHS implies that $\lfrac{(a+2\lambda)}{a} \leq -1$, i.e., $a+\lambda \geq 0$. This contradicts the fact that $\lambda (\lambda+a) + 2 \leq 0$. We thus have $0>N>D$, which means $0 < C_{\lambda} < 1$. Therefore $f$ has a simple pole at $a+\lambda$ and the residue $C_{\lambda}$ at this point.
\end{proof}

Define $G_{\lambda} \colon \CC_{+} \cup (\RR \setminus \{ a + \lambda \}) \to \CC$ by
\begin{equation}
\label{eq:CSTextended}
G_{\lambda}(z) =
\begin{cases}
\dfrac{G(z-\lambda)}{1 + \lambda G(z-\lambda)} & \text{for $z \neq \lambda \mp \sqrt{2+\gamma_0}$,} \\
\lfrac{1}{\lambda} & \text{otherwise,}
\end{cases}
\end{equation}
which is the continuous extension of $G_{\lambda} \colon \CC_{+} \to \CC_{-}$, also denoted by $G_{\lambda}$ with a slight abuse of notation.

We now use the Stieltjes inversion formula to find the exact form of $\mu_{\lambda}$.
\begin{theorem}
\label{thm:limitMeasure}
The measure $\mu_{\lambda}$, $\lambda > 0$, has the form
\begin{equation*}
\mu_{\lambda}( \diff x ) = \frac{\lambda \left(2 - a^{2}\right)}{\lambda^2a+2\lambda+2a} \cdot \delta_{a +\lambda}( \diff{x} ) - \frac{\Im G_{\lambda}(x)}{\pi} \, \diff{x} \text{,}
\end{equation*}
where $\delta_x$ is the Dirac measure at $x \in \RR$. Moreover, the absolutely continuous part of the measure is supported on $[\lambda - \sqrt{2+\gamma_0}, \lambda + \sqrt{2+\gamma_0}]$.
\end{theorem}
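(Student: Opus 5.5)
The plan is to apply the Stieltjes inversion formula to the continuous extension \eqref{eq:CSTextended} of $G_{\lambda}$ and to separate the real line into three regions: the point $a+\lambda$, the interval $J=[\lambda-\sqrt{2+\gamma_0},\lambda+\sqrt{2+\gamma_0}]$, and the rest. Recall that for any probability measure $\mu$ with Cauchy--Stieltjes transform $G_\mu$, the measure $-\frac1\pi \Im G_\mu(x+iy)\,\diff x$ converges weakly to $\mu$ as $y\to0^+$. Moreover, if $G_\mu$ extends continuously to $\CC_+\cup U$ for an open set $U\subseteq\RR$, then on $U$ the measure $\mu$ is absolutely continuous with density $-\frac1\pi\Im G_\mu(x)$. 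Point masses are detected as $\mu(\{x_0\})=\lim_{y\to0^+}(-iy)\,G_\mu(x_0+iy)$.

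\textbf{Step 1 (the atom).} By Lemma~\ref{lemma:residueValue}, $G_\lambda$ has a simple pole at $a+\lambda$ with residue $C_\lambda$. Hence $\lim_{y\to0^+}(-iy)G_\lambda(a+\lambda+iy)=C_\lambda$, which gives the atom $C_\lambda\delta_{a+\lambda}$ with $0<C_\lambda<1$. Since the singular part of the extension near $a+\lambda$ is exactly $C_\lambda/(z-a-\lambda)$ plus a function analytic on a disk around $a+\lambda$ and real on the real axis, there is no continuous mass in a neighbourhood of this point.

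\textbf{Step 2 (outside $J$).} For real $x\notin J\cup\{a+\lambda\}$ we have $x-\lambda\in\RR\setminus[-\sqrt{2+\gamma_0},\sqrt{2+\gamma_0}]$. By the proof of Proposition~\ref{proposition:CSTextended}, $G(x-\lambda)$ is real there, so $G_\lambda(x)$ is real. The continuous extension then gives density zero on this open set.

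\textbf{Step 3 (inside $J$).} On the interior of $J$ the function $G(x-\lambda)$ has negative imaginary part, as shown in the proof of Proposition~\ref{proposition:CSTextended}. The denominator $1+\lambda G(x-\lambda)$ therefore does not vanish, and $G_\lambda$ is continuous up to the boundary, with $\Im G_\lambda(x)=\Im G(x-\lambda)/\lvert1+\lambda G(x-\lambda)\rvert^2\le0$. This yields the density $-\frac1\pi\Im G_\lambda(x)$ on $J$.

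\textbf{Step 4 (the endpoints).} At $x=\lambda\mp\sqrt{2+\gamma_0}$ we have $G\to\infty$ by Theorem~\ref{thm:propertiesOfG}(3), so $G_\lambda\to1/\lambda$ and stays bounded. Consequently $y\,G_\lambda(x+iy)\to0$, and the endpoints carry no mass.

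\textbf{Main obstacle.} The delicate step is justifying the passage from the continuous extension to the weak limit. This requires locally uniform bounds on $\Im G_\lambda(x+iy)$ as $y\to0^+$ away from $a+\lambda$. I would get these from the continuity of the extension of $G_\lambda$ on $(\CC_+\cup\RR)\setminus\{a+\lambda\}$, which holds because $G$ is continuous there by Theorem~\ref{thm:propertiesOfG} and the denominator vanishes only at $a+\lambda$ (Proposition~\ref{proposition:CSTextended}). Together with the compactness of $\supp\mu_\lambda$, this gives dominated convergence on compacts avoiding $a+\lambda$.

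As a final consistency check, the total mass is $C_\lambda+\int_J(-\frac1\pi\Im G_\lambda)=1$, consistent with $zG_\lambda(z)\to1$.
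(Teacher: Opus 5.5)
Your proposal is correct and follows essentially the same route as the paper. The atom comes from the simple pole and residue in Lemma~\ref{lemma:residueValue}. The density comes from dominated convergence for test functions supported away from $a+\lambda$, using continuity (hence local boundedness) of the extension \eqref{eq:CSTextended} on $(\CC_{+}\cup\RR)\setminus\{a+\lambda\}$, including $G_\lambda\to 1/\lambda$ at $\lambda\mp\sqrt{2+\gamma_0}$. Your Steps~2--3 also make explicit the support claim that the paper leaves implicit: $G_\lambda$ is real outside the interval and $\Im G_\lambda<0$ inside it.

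One small slip: the point mass is $\mu(\{x_0\})=\lim_{y\to0^+} iy\,G_\mu(x_0+iy)$, not the formula with $-iy$. You can check this on $\mu=\delta_0$, where $G_\mu(z)=1/z$.
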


\begin{proof}
Fix $\lambda > 0$. The continuity of $G$, needed to apply the Stieltjes inversion formula, follows from the fact that
\begin{equation*}
\lim_{z \to \sqrt{2+\gamma_0}} \lfrac{1}{G(z)} = 0 \text{,} \qquad \Im z \geq 0 \text{,}
\end{equation*}
which was shown in the proof of~\cite[Theorem~5.4]{AD2025}. Due to Lemma~\ref{lemma:residueValue}, the point $a+\lambda$ is an isolated point of $\supp \mu_{\lambda}$ and it is an atom of the measure with the weight $C_{\lambda}$ (see~\cite[Proposition~1.104]{HO}).

Let $\phi: \RR \to \RR$ be a continuous function with a compact support such that $a+\lambda \notin \supp \phi$. For such function we have
\begin{equation*}
\lim \limits_{y \to 0^{+}} \phi(x) \Im G_{\lambda}(x+iy) = \phi(x) \Im G_{\lambda}(x)
\end{equation*}
for any $x \in \RR \setminus \{ a + \lambda \}$, which implies that
\begin{equation*}
\lim \limits_{y \to 0^{+}} \int_{-\infty}^{\infty} \phi(x) \Im G_{\lambda}(x+iy) \diff{x} = \int_{-\infty}^{\infty} \phi(x) \Im G_{\lambda}(x) \diff{x} \text{.}
\end{equation*}
The convergence above is guaranteed by the Lebegue's dominant convergence theorem with the majorant
\begin{equation*}
\lvert \phi(x) \rvert \cdot \max_z \lvert G_{\lambda}(z) \rvert \text{,}
\end{equation*}
where the maximum is taken over $\supp \phi + i \cdot [0, 1]$ from which it follows that $\mu_{\lambda}$ restricted to $\RR \setminus \{ a + \lambda \}$ equals $-\lfrac{1}{\pi} \Im G_{\lambda}(x) \diff{x}$. This finishes the proof.
\end{proof}

We present the plots of $\mu_{\lambda}$ for several values of $\lambda$ in Fig.~\ref{figure:measures}. We now prove several facts about the atom of a $\mu_{\lambda}$.
\begin{figure}
\centering
    \includegraphics[scale=0.5]{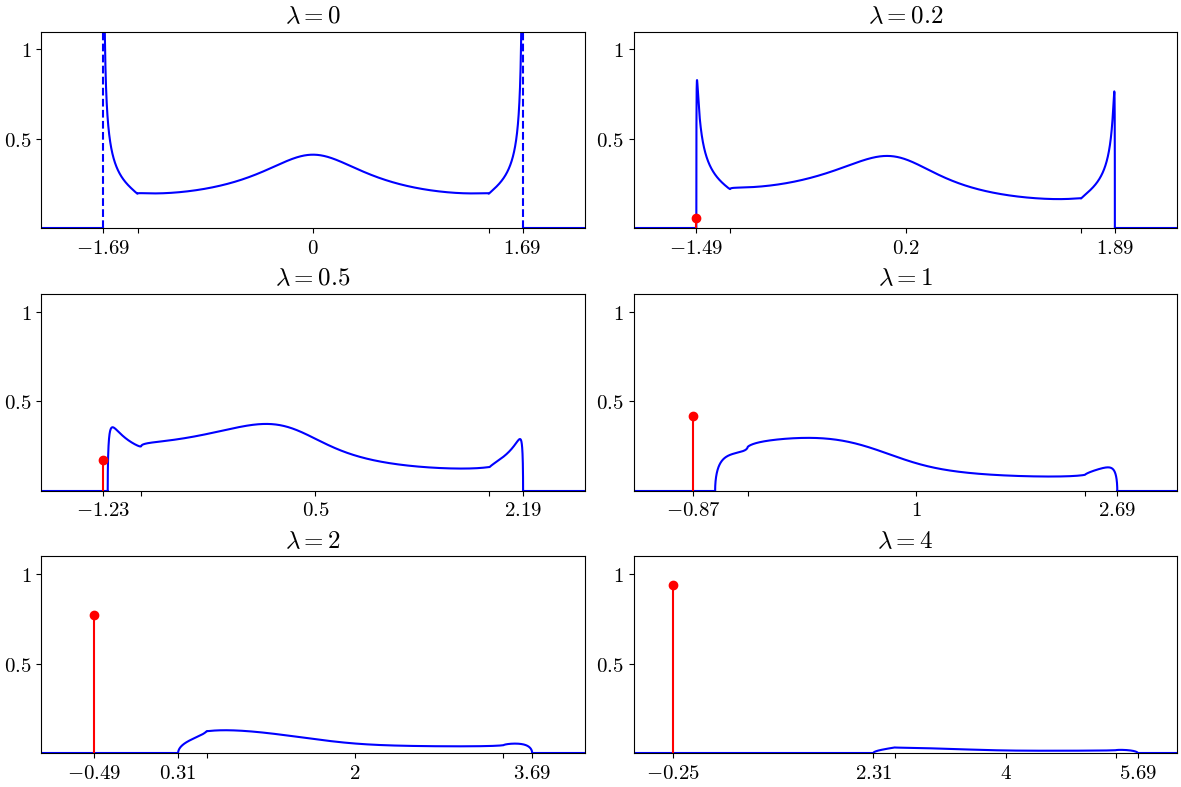}
    \caption{Limit distribution for $\lambda \in \{ 0, \lfrac{1}{5}, \lfrac{1}{2}, 1, 2, 4 \}$.}
\label{figure:measures}
\end{figure}

\begin{theorem}
\label{thm:behaviorOfAtom}
The function $\lambda \mapsto a(\lambda)$ has the following properties:
\begin{enumerate}
    \setlength\itemsep{1mm}
    \item\label{test} $a'(\lambda) = -C_{\lambda}$ and thus $a$ is decreasing,
    \item $\lim \limits_{\lambda \to 0^{+}} a(\lambda) = -\sqrt{2+\gamma_0}$ and $\lim \limits_{\lambda \to \infty} a(\lambda) = -\infty$,
    \item $ -\lambda -\min(\sqrt{2+\gamma_0}, \lfrac{2}{\lambda})< a(\lambda) < -\max(\sqrt{2+\gamma_0}, \lambda)$.
\end{enumerate}
\end{theorem}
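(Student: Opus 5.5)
The plan is to read everything off the defining relation $G(a(\lambda)) = -\lfrac{1}{\lambda}$ from Proposition~\ref{proposition:CSTextended}, together with the facts collected in the proof of Lemma~\ref{lemma:residueValue}.

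For (1), I will show that $a$ is differentiable by the implicit function theorem. On $(-\infty, -\sqrt{2+\gamma_0})$ the function $G$ is real-analytic, odd and strictly decreasing, so $G'\neq 0$ there. I expect to check $G'\neq 0$ through the explicit formula for $G'$ in the proof of Lemma~\ref{lemma:residueValue}; this is the only point needing genuine care. Differentiating $G(a(\lambda)) = -\lfrac{1}{\lambda}$ gives $G'(a)\,a' = \lfrac{1}{\lambda^2}$, hence $a'(\lambda) = \lfrac{1}{(\lambda^2 G'(a))}$. Since $G$ is odd, $G'$ is even, so $G'(a) = G'(-a)$. By~\eqref{eq:atomWeight1} and~\eqref{eq:atomWeight2}, $C_\lambda = -\lfrac{1}{(\lambda^2 G'(-a))}$. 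Therefore $a'(\lambda) = -C_\lambda$. Since $0<C_\lambda<1$ by Lemma~\ref{lemma:residueValue}, $a$ is strictly decreasing, and moreover $-1 < a'(\lambda) < 0$.

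For (2), I will use that $G$ restricted to $(-\infty,-\sqrt{2+\gamma_0})$ is a continuous strictly decreasing bijection onto $(-\infty,0)$. This follows from~\eqref{eq:limitG} and oddness: $G(x)\to 0^-$ as $x\to-\infty$ and $G(x)\to-\infty$ as $x\to(-\sqrt{2+\gamma_0})^-$. Hence $a(\lambda) = (G|_{(-\infty,-\sqrt{2+\gamma_0})})^{-1}(-\lfrac{1}{\lambda})$, and the inverse is continuous. As $\lambda\to0^+$ we have $-\lfrac{1}{\lambda}\to-\infty$, so $a(\lambda)\to-\sqrt{2+\gamma_0}$. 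As $\lambda\to\infty$ we have $-\lfrac{1}{\lambda}\to 0^-$, so $a(\lambda)\to-\infty$.

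For (3), the bound $a<-\sqrt{2+\gamma_0}$ is part of Proposition~\ref{proposition:CSTextended}.

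For $a<-\lambda$, I will use the identity $T_0^{-1}(W(-a)) = -\lfrac{\lambda}{a}$ from the proof of the corollary giving~\eqref{eq:equationFor_a}. Since $-a>\sqrt{2+\gamma_0}$, we have $W(-a)\in(0,\lfrac{\sqrt3\pi}{9})$, and $T_0^{-1}$ maps this interval into $(0,1)$. Hence $-\lfrac{\lambda}{a}<1$, i.e.\ $a<-\lambda$.

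For the lower bound, recall from the proof of Lemma~\ref{lemma:residueValue} that $N-D = -a(\lambda^2+\lambda a+2)>0$. With $a<0$ this gives $\lambda^2+\lambda a+2>0$, i.e.\ $a>-\lambda-\lfrac{2}{\lambda}$. For the other lower bound, integrate $a' = -C_\lambda > -1$ over $(\varepsilon,\lambda)$ and let $\varepsilon\to0^+$, using (2). This gives $a(\lambda)+\sqrt{2+\gamma_0} = -\int_0^\lambda C_t\,\diff t > -\lambda$. The inequality is strict because $C_t<1$ for every $t$ and $C$ is continuous. Hence $a>-\lambda-\sqrt{2+\gamma_0}$. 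Combining the two lower bounds gives $a>-\lambda-\min(\sqrt{2+\gamma_0},\lfrac{2}{\lambda})$, which completes (3).
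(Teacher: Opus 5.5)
Your proof is correct and follows the paper's argument: (1) by differentiating $G(a(\lambda))=-1/\lambda$ and using \eqref{eq:atomWeight1}, (2) by the monotone bijection coming from \eqref{eq:limitG}, and (3) by integrating $-1<a'<0$ together with the inequality $\lambda^2+\lambda a+2>0$ from the proof of Lemma~\ref{lemma:residueValue}. The only differences are minor. You obtain $a<-\lambda$ directly from $-\lambda/a=T_0^{-1}(W(-a))\in(0,1)$ instead of via \eqref{eq:equationFor_a}, and you explicitly justify $G'(a)\neq 0$, which the paper leaves implicit.
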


\begin{proof}
From the Proposition \ref{proposition:CSTextended}, $G\restriction \RR \setminus [-\sqrt{2+\lambda_0}, \sqrt{2+\lambda_0}]$ is an odd function. Then
\begin{equation*}
  a'(\lambda) = \left(G^{-1}\left(-\lfrac{1}{\lambda}\right)\right)'= \frac{1}{\lambda^2} \cdot \frac{1}{G'\left(G^{-1}\left(\lfrac{1}{\lambda}\right)\right)} 
               =\frac{1}{\lambda^2} \cdot \frac{1}{G'(-a)}=-C_{\lambda}, 
\end{equation*}
where the last equality follows from~\eqref{eq:atomWeight1}. Moreover, since $0<C_{\lambda}<1$, $a(\lambda)$ is decreasing, and this proves~\eqref{test}. The second statement follows from~\eqref{eq:limitG} since $a(\lambda)=G^{-1}\left(-\lfrac{1}{\lambda}\right)$. Now, since $a(0^+)=\sqrt{2+\gamma_0}$ and $a'(\lambda)\in(-1, 0)$, we have
$$-\lambda-\sqrt{2+\gamma_0}<a(\lambda)<-\sqrt{2
+\gamma_0}.$$
From the proof of Lemma~\ref{lemma:residueValue}, we obtain $\lambda^2+\lambda a+2>0$. Combining it with~\eqref{eq:equationFor_a} as in the proof, we get $a+\lambda<0$, which leads to 
$$-\lambda-\frac{2}{\lambda}<a(\lambda)<-\lambda.$$
This proves the last statement.
\end{proof}

In Fig.~\ref{figure:atom}, we present the graphs of the position (left) and the weight (right) of the unique atom as the functions of $\lambda \geq 0$.
\begin{figure}
\centering
    \includegraphics[scale=0.5]{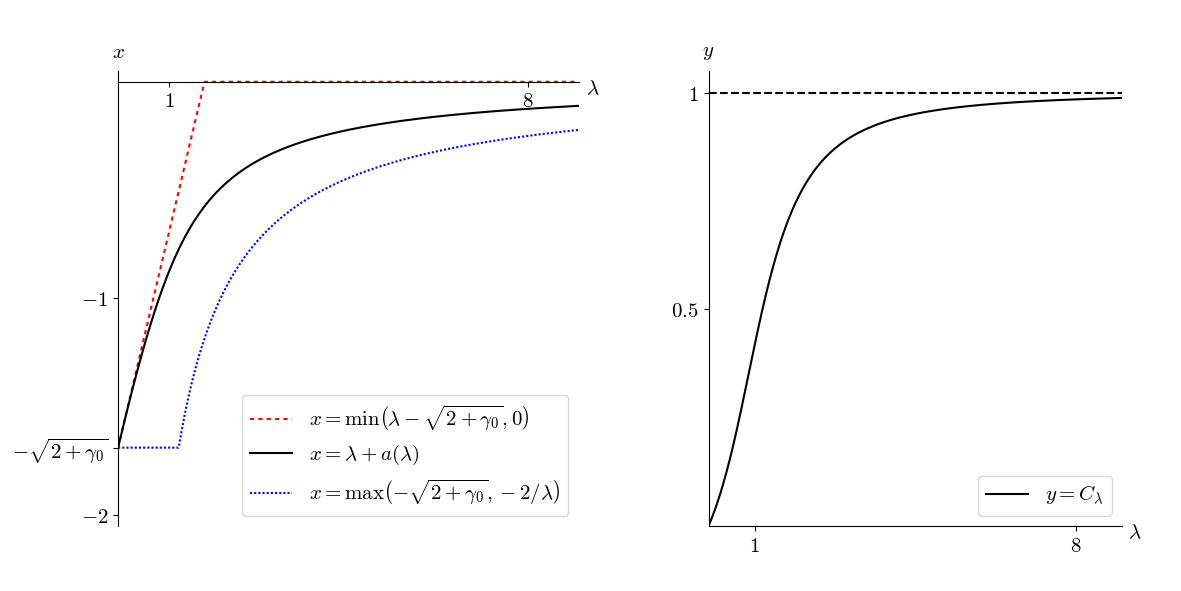}
    \caption{Position and weight of the atom.}
\label{figure:atom}
\end{figure}

\section*{Acknowledgements}
The first named author has been supported by Wroc\l{}aw University of Environmental and Life Sciences (Grant number B010/0019/24).
\newpage
\bibliographystyle{plain}
\bibliography{main.bib}

\end{document}